\documentclass[reqno]{amsart}
\usepackage[pagewise]{lineno}
\usepackage{amssymb}
\usepackage{enumerate}
\usepackage{bbm}
\usepackage{xcolor}
\usepackage{enumitem}
\usepackage{comment}
\usepackage[notref,notcite]{showkeys}
\allowdisplaybreaks

\begin{document}
	
	\newcommand{\dt}{\mathrm{d}t}
	\newcommand{\dx}{\mathrm{d}x}
	\newcommand{\dy}{\mathrm{d}y}
	\newcommand{\dxdt}{\mathrm{d}x\mathrm{d}t}
	\title[Sparse Optimal control for the VCHE]{Optimal control of the three-dimensional viscous Camassa-Holm equations with sparse controls}
	\author [ N. H. H. Giang]
	{Nguyen Hai Ha Giang$^{\natural1}$}
	
	\address{Nguyen Hai Ha Giang \hfill\break
		Department of Mathematics, Hanoi National University of Education \hfill\break
		136 Xuan Thuy, Cau Giay, Hanoi, Vietnam}
	\email{giangnhh@hnue.edu.vn} 
	
	\subjclass[2020]{49J20; 49K20; 49K40; 76D55; 35Q35}
	\keywords{Navier-Stokes equations; viscous Camassa-Holm equations; optimality conditions; optimal control; sparse control.} 
	\thanks{$^{\natural}$ Corresponding author: giangnhh@hnue.edu.vn}
	\numberwithin{equation}{section}
	\newtheorem{theorem}{Theorem}[section]
	\newtheorem{lemma}{Lemma}[section]
	\newtheorem{proposition}{Proposition}[section]
	\newtheorem{remark}{Remark}[section]
	\newtheorem{definition}{Definition}[section]
	\newtheorem{corollary}{Corollary}[section]
	\newtheorem{example}{Example}[section]
	\begin{abstract}
		We investigate a distributed optimal control problem for the viscous Camassa--Holm equations with sparse controls and a general cost functional. Considering three different forms of sparsity-promoting terms, we prove the existence of optimal solutions, derive the corresponding optimality conditions and analyze the stability of optimal solutions with respect to the sparsity parameter.
	\end{abstract}
	
	\maketitle
	\section{Introduction}
	
	Let $\Omega$ be a bounded domain in $\mathbb{R}^3$ with smooth boundary $\partial \Omega$ (at least $C^3$). We denote the time-space cylinder by $Q=\Omega\times (0,T)$.  In this work, we consider the minimization of the following objective functional
	\begin{align*}
		J(y,u)= \int_{0}^{T}g(t,y(t))dt + \frac{\gamma}{2} \iint_Q\vert u(x,t)\vert^2 \, dxdt + \kappa j(u), \hspace{10pt} \gamma>0.
	\end{align*}
	Here, we consider three choices of the functional $j(u)$:
	\begin{align}
		&j_1 (u): L^1(Q)^3 \rightarrow \mathbb{R}, j_1(u)=\Vert u\Vert_{L^1(Q)^3}= \iint_Q \vert u(x,t)\vert dxdt,\\
		&j_2(u): L^2(0,T;L^1(\Omega)^3) \rightarrow \mathbb{R}, j_2(u)=\Vert u \Vert_{L^2(0,T;L^1(\Omega)^3)}=\left[ \int_0^T \Vert u(t) \Vert_{L^1(\Omega)^3}^2 dt \right] ^{1/2},\\
		&j_3(u): L^1(\Omega;L^2(0,T)^3) \rightarrow \mathbb{R}, j_3(u)=\Vert u\Vert_{L^1(\Omega;L^2(0,T)^3)}=\int_\Omega \Vert u(x) \Vert_{L^2(0,T)^3} \,dx.
	\end{align}
	The corresponding objective functionals with $j_1, j_2, j_3$ will be denoted by $J_1, J_2, J_3,$ respectively. \\
	Meanwhile, the free variables, state $y$ and control $u$, must fulfill the following three-dimensional viscous Camassa-Holm equations \textcolor{red}{
		\begin{equation}\label{1.1}
			\begin{cases}
				\partial_{t}\left ( y-\alpha ^{2} \Delta y \right )+\nu  \left (Ay-\alpha ^{2} \Delta Ay  \right ) + \nabla p\\
				\hspace{70pt}= y \times \left ( \nabla\times\left ( y-\alpha ^{2}\Delta y \right ) \right ) + u,  &(x,t) \in (\Omega \times (0,+\infty)),\\
				\nabla \cdot y = 0, &(x,t) \in (\Omega \times (0,+\infty)), \\
				y = Ay = 0,   &(x,t) \in (\partial \Omega \times (0,+\infty)),  \\
				y(x,0) = y_0(x),  &\ x\in\Omega,
			\end{cases}
	\end{equation} }
	where $\alpha>0$, $\nu>0$ is the viscous constant, $u$ is the control, $y$ is the corresponding fluid velocity vector, $g:[0,T] \times V \to \mathbb{R}$ is a given function satisfying some certain assumptions which will be specified later in Section 2.\\
	Moreover, we require that the control $u$ belongs to the admissible set 
	\[U_{a, b}=\left \{ u\in L^2(0,T;L^2(\Omega)^3): a_i \leq u_i(x,t)\leq b_i \ \text{a.e} \ (x,t)\in Q, i=1,2,3 \right\},\]
	for real numbers $a_i\leq0<b_i$. \\
	Henceforth, the corresponding problems with $j_1, j_2, j_3$ will be denoted by $(\textbf{P}_1), (\textbf{P}_2),$ \\ $(\textbf{P}_3)$, respectively.
	
	The viscous Camassa-Holm equations (VCHE for short), also known as the Lagrangian averaged Navier-Stokes (LANS-$\alpha$) or Navier-Stokes-$\alpha$ equations, are a regularized modification of the Navier–Stokes equations that preserve the transport properties of circulation and vorticity dynamics. Initially, the inviscid ($\nu=0$) case, called the Lagrangian averaged Euler (LAE-$\alpha$) or Euler-$\alpha$ equations, was introduced in \cite{Holm} as a natural mathematical generalization of the integrable inviscid one-dimensional Camassa-Holm equation discovered in \cite{Camassa}. Then in \cite{Foias2002}, the authors incorporated viscous dissipation into the equations due to the physical significance of the momentum $u+\alpha^2Au$. Since then, mathematical problems related to the viscous Camassa-Holm equations have attracted considerable attention. In bounded domains with Dirichlet or periodic boundary conditions, many results have been established concerning the existence of solutions and global attractors, see e.g. \cite{Coutand, Foias2002, Ilyn2003, Kim2006, Marsden2001, Vishik2007} and references therein. The time decay rates of solutions on the whole space were extensively investigated in \cite{AT2017, schonbek2008, Gao2022, Zhao}. We also refer the interested readers to \cite{Albanez2016, AnhBach, Korn2009} for results on the data assimilation and to \cite{Gao2016, Mitra} for the controllability of the viscous Camassa-Holm equations. 
	
	In this paper, we focus on optimal control problems, particularly on sparse optimal control problems, for the mentioned three-dimensional viscous Camassa–Holm equations. Sparse controls refer to controllers that are nonzero only in a small region of the domain. Such problems arise from the need of finding suitable, small regions to place the controllers, rather than acting on the flow at every point of the domain. As stated in \cite{casas2017}, there are two approaches to obtain the sparsity effect, each with its own mathematical challenges and physical applicability. The first employs an $L^1$-norm of the control in the cost functional and the other uses controllers in measure spaces. Numerous results have been published concerning both directions, see \cite{casas2017,CHW2012,CHW2017,CK2013,CK2016,CK2011,KPV2014,PV2013} and the relevant references.
	
	Specifically, our problem contains an abstract cost functional and three choices of controls related to the $L^1$-norm.  Following the general approach in \cite{casas2017} or \cite{CHW2017}, we will prove the existence of optimal controls, establish the optimality conditions for this problem, but broaden by proving the stability of the control system with respect to the sparsity parameter. Compare with the existing results, the novelty of this article lies in two points. First, we allow the objective functional to be considerably more general. This class of functionals includes not only the standard quadratic form but also other classes as considered in \cite{AS2025}, while requiring even fewer conditions. Second, to the best of our knowledge, the problem of stability analysis of optimal solutions under variations of the sparsity parameter is so far unknown. There were works investigating the stability of solutions under different perturbations, e.g. \cite{AGS2025, bonnans2013perturbation, casas2015_2, CT2022}. However, these studies focus mainly on the cases of perturbations in the state equations, the initial data, or the Tikhonov regularizing parameter. The methods developed here, together with \cite{AGS2025}, can be further applied to establish the H\"older stability in the case of bang-bang-bang structure $\gamma=0$.
	
	The paper is organized as follows. In Section 2, we will recall some auxilliary and fundamental results on function spaces and inequalites for the nonlinear terms related to the Camassa--Holm equations, as well as the properties of weak solution and the control-to-state mapping. After proving the existence of optimal solutions, in Sections 4 and 5, we derive the optimality conditions. Finally, the Lipschitz and H\"older stability analysis of optimal controls with respect to the sparsity parameter are given in the last section.  
	
	\section{Preliminaries}
	\subsection{Function spaces and inequalities for the nonlinear terms}
	Throughout this article, we define
	\begin{align*}
		(u,v)&:=\int_{\Omega}\displaystyle\sum_{j=1}^{3}u_jv_j\,dx, u=(u_1,u_2,u_3),v=(v_1,v_2,v_3) \in L^2(\Omega)^3;\\
		((u,v))&:=\int_{\Omega}\displaystyle\sum_{j=1}^{3}\nabla u_j\cdot \nabla v_j\,dx, u=(u_1,u_2,u_3),v=(v_1,v_2,v_3) \in H_0^1(\Omega)^3,
	\end{align*}
	with the associated norms $|u|^2 :=(u,u), \|u\|^2:=((u,u))$, respectively. For convenience, we denote
	\begin{align*}
		\mathbb{L}^p(\Omega) &:= \left(L^p(\Omega)\right)^3; \\
		\mathbb{H}^s(\Omega) &:= \left(H^s(\Omega)\right)^3; \\
		\mathbb{H}^s_0(\Omega) &:= \left(H^s_0(\Omega)\right)^3.
	\end{align*}
	Let us define the following functional spaces
	\begin{align*}
		\mathcal{V} &:= \{u\in (C_{0}^{\infty}(\Omega))^3\mid  \nabla\cdot u=0\};\\
		H &:= \text{ the closure of $\mathcal{V}$ in }\mathbb{L}^p(\Omega);\\
		V &:= \text{ the closure of $\mathcal{V}$ in }\mathbb{H}^1_0(\Omega);\\
		\mathbb{H}^s_{\sigma} &:= \text{ the closure of $\mathcal{V}$ in }\mathbb{H}^s_0(\Omega);\\
		D(A)&:=\mathbb{H}^2(\Omega)\cap V.
	\end{align*}
	The spaces $H$ and $V$ are Hilbert spaces with scalar products $(\cdot,\cdot)$ and $((\cdot,\cdot))$, respectively. It follows that $D(A) \subset V \subset H \equiv H'\subset V'$, where the injections are dense and continuous. Moreover, the imbeddings $V \hookrightarrow H$ and $D(A) \hookrightarrow V$ are compact. \\
	Denote by $A$ the Stokes operator, with domain $D(A)=\mathbb{H}^2(\Omega)\cap V$, defined by $Au=-\mathcal{P}(\Delta u)$, where $\mathcal{P}$ is the Leray projection, i.e. the projection operator from $\mathbb{L}^2(\Omega)$ onto $H$.
	Since $A$ is closed operator with discrete spectrum, we can define the fractional powers of $A$, namely $A^\alpha$, where $\alpha>0$ as follows: we already know that the set of eigenfunctions $\{w_i\}$ of $A$ form an orthornormal basis of $H$. For each $u \in H$, define
	\begin{align*}
		A^\alpha u = \sum_{n=1}^\infty \lambda^{\alpha}_n(u,w_n)w_n,
	\end{align*}
	where $\lambda_n$ is the eigenvalue with respect to the eigenfunction $w_n$. Then the domain of $A^\alpha$ in $H$ is
	\begin{align*}
		D(A^\alpha) = \{u: |A^\alpha u|<\infty\}.
	\end{align*}
	The space $D(A^\alpha)$ is a Banach space with the associated norm $\vert A^\alpha u \vert$. Moreover, we can define a scalar product on $D(A^\alpha)$ to make it a Hilbert space 
	\begin{align*}
		(u,v)_{D(A^{\alpha})} = (A^\alpha u, A^\alpha v).
	\end{align*}
	When $\alpha$ is equal to $k/2$, $k \in \mathbb{Z}^+$, it is well-known that if $k$ is even, we have the inclusion $\mathbb{H}^k(\Omega) \cap \mathbb{H}^{k-1}_0 \subset D(A^{k/2})$; if $k$ is odd then $\mathbb{H}^k_0 \subset D(A^{k/2})$. Moreover, if the boundary $\partial\Omega$ is of class $C^k$, then the norm $|A^{k/2}u|$ is equivalent to the $\mathbb{H}^k$-norm, namely there exists $c_1,c_2>0$ such that
	\begin{align*}
		c_1|A^{k/2}u|\leq \|u\|_{\mathbb{H}^k}\leq c_2|A^{k/2}u|.
	\end{align*}
	For more facts concerning the fractional powers of $A$, see e.g. \cite{robinson2003}. In particular, we mainly focus on the case when $k=1,2,3$. For the case $k=1$, we have the $D(A^{1/2})$ norm is equivalent to the $\mathbb{H}^1_0$-norm. For the case $k=2$, we receive the familiar $D(A)$ space. For the case $k=3$, as mentioned above, we have the inclusion $\mathbb{H}^3_\sigma(\Omega) \subset D(A^{3/2})$ and the norm $|A^{3/2}u|$ is equivalent to the $\mathbb{H}^3$-norm, or further the $\mathbb{H}^3_{\sigma}$-norm. Additionally, from the case $k=1$ and $k=3$, we can define the norm $\|Au\|$ which is equivalent to $|A^{3/2}u|$ and $\|u\|_{\mathbb{H}^3_\sigma}$.\\
	When $u$ belongs to $\mathbb{H}^3_{\sigma}$, we can define an operator $A^2$ with value in $V'$ such that
	\begin{align*}
		\langle A^2u,v\rangle_{V',V} = (A^{3/2}u,A^{1/2}v) = ((Au,v))
	\end{align*}
	for every $v\in V$. Similarly, one can also define the operator $A^2$ with value in $D(A)'$ as
	\begin{align*}
		\langle A^2u,v\rangle_{D(A)',D(A)} = (Au,Av)
	\end{align*}
	for every $v\in D(A)$.\\
	We denote by $L^p(0,T;X)$ the standard Banach space of all functions from $(0,T)$ to a real Banach space $X$, endowed with the norm
	\begin{align*}  \|y\|_{L^p(0,T;X)} &:=\left(\int_{0}^{T}\|y(t)\|_{X}^{p}\,dt\right)^{1/p};\\
		\|y\|_{L^{\infty}(0,T;X)} &:= \mathrm{ess sup} \{\|y(t)\|_X\mid t\in (0,T)\}.
	\end{align*}
	In particular,  $L^2(0,T;V)$ is a Hilbert space with the dual space $L^2(0,T;V')$. The duality pairing between $u \in L^2(0,T;V')$ and $ v \in L^2(0,T;V)$ is
	\begin{equation*}
		\langle u,v\rangle_{L^2(V'),L^2(V)}=\int_{0}^{T}\langle u(t),v(t)\rangle_{V',V}\,dt.
	\end{equation*}
	For convenience, let us define the space of abstract functions  whose time derivative lies in $L^2(0,T;V)$ as
	\begin{align*}
		W(0,T;H^3,V):=\left\{y \in L^2(0,T;\mathbb{H}^3_{\sigma})\mid \dfrac{dy}{dt} \in L^2(0,T;V)\right\},
	\end{align*}
	associated with the norm $\|\cdot\|_{W(0,T;H^3,V)}$ defined by
	\begin{align*}
		\|y\|^2_{W(0,T;H^3,V)}:=\|y\|^2_{L^2(0,T;\mathbb{H}^3_{\sigma})} + \|y_t\|^2_{L^2(0,T;V)}.
	\end{align*}
	By Corollary 7.3 in \cite{robinson2003}, the space $W(0,T;H^3,V)$ is continuously embedded in $L^{\infty}(0,T;\mathbb{H}^2_{\sigma})$. Moreover, since $
	\Omega$ is $C^3$, the embedding $W(0,T;H^3,V) \hookrightarrow L^2(0,T;D(A))$ is compact.\\
	We define the trilinear form $b:V \times V\times V \to \mathbb{R}$ by
	\begin{equation}
		b(u,v,w):=\displaystyle\sum_{i,j=1}^{3}\int_{\Omega}u_i \dfrac{\partial v_j}{\partial x_i} w_j \,dx.
	\end{equation}
	One can verify that if $u,v,w \in V$ then\\
	\centerline{$b(u,v,w)=-b(u,w,v)$.}
	Hence $b(u,v,v)=0 \ \forall u,v \in V.$
	\begin{lemma}
		[\cite{temam1995navier},\cite{robinson2003}]\label{lem2.1}
		If $n=3$ then the following estimates hold
		\begin{enumerate}
			\item $|b(u,v,w)| \leq C|u|^{1/4}\|u\|^{3/4} \|v\| |w|^{1/4}\|w\|^{3/4}$ for all $(u,v,w)\in V\times V \times V$;
			\item $|b(u,v,w)| \leq C\|u\| \|v\| \|w\|$ for all $(u,v,w)\in V\times V \times V$;
			\item $|b(u,v,w)| \leq C|u| \|v\||Aw|$ for all $(u,v,w)\in H \times V \times D(A)$;
			\item $|b(u,v,w)| \leq C|Au| \|v\| |w|$ for all $(u,v,w)\in D(A) \times V \times H$.
		\end{enumerate}
	\end{lemma}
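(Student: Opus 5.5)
The four bounds all come from Hölder's inequality applied to
\[
b(u,v,w)=\sum_{i,j}\int_\Omega u_i\,\partial_i v_j\, w_j\,dx,
\]
combined with three-dimensional Sobolev embeddings and interpolation. Throughout, $\|v\|$ controls $\nabla v$ in $\mathbb{L}^2$. In each case I would put $\nabla v$ in $L^2$ and choose exponents $p,q$ with $1/p+1/q=1/2$, which gives
\[
|b(u,v,w)|\le |u|_{L^p}\,\|v\|\,|w|_{L^q}.
\]
It then remains to bound the $L^p$ and $L^q$ norms of $u$ and $w$ in terms of $|\cdot|$, $\|\cdot\|$ and $|A\cdot|$. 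I would use the following standard facts on a bounded smooth domain in $\mathbb{R}^3$:
\begin{itemize}
\item the Sobolev embedding $H^1_0\hookrightarrow L^6$, so $|u|_{L^6}\le C\|u\|$;
\item the Gagliardo--Nirenberg (Ladyzhenskaya-type) interpolation $|u|_{L^4}\le C|u|^{1/4}\|u\|^{3/4}$, obtained by interpolating $L^4$ between $L^2$ and $L^6$ with $1/4=\theta/2+(1-\theta)/6$, i.e. $\theta=1/4$;
\item the embedding $H^2\hookrightarrow L^\infty$, together with elliptic regularity of the Stokes operator ($\partial\Omega$ at least $C^2$), which gives $|w|_{L^\infty}\le C\|w\|_{\mathbb{H}^2}\le C|Aw|$ for $w\in D(A)$.
\end{itemize}

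The four items then follow in turn.
\begin{enumerate}
\item Take $p=q=4$ and apply the interpolation inequality to both $u$ and $w$.
\item Start from (1). Since $|u|\le C\|u\|$ by Poincaré, bound $|u|^{1/4}\|u\|^{3/4}\le C\|u\|$, and do the same for $w$. Alternatively, take $p=q=4$ and use $H^1\hookrightarrow L^4$ directly.
\item Take $p=2$, $q=\infty$. This is legitimate because $u\in H\subset\mathbb{L}^2$, and $|w|_{L^\infty}\le C|Aw|$.
\item Symmetrically, take $p=\infty$ and $q=2$, so $|b|\le |u|_{L^\infty}\|v\||w|\le C|Au|\,\|v\|\,|w|$.
\end{enumerate}

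There is a density point to handle. The trilinear form is defined on $V\times V\times V$, but items (3) and (4) involve $u\in H$ or $w\in H$. So I would first prove these bounds for smooth (or $V$) arguments. The right-hand sides depend only on $|u|$ (respectively $|w|$), so the integral $b$ extends continuously to $H\times V\times D(A)$ (respectively $D(A)\times V\times H$), and the estimates persist by density of $V$ in $H$.

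I expect no real obstacle here; the only nontrivial ingredient is the equivalence $\|w\|_{\mathbb{H}^2}\simeq|Aw|$. That equivalence follows from Cattabriga's regularity for the Stokes problem and is already recalled in the preliminaries, where $|A^{k/2}\cdot|$ is shown equivalent to the $\mathbb{H}^k$-norm for $k=2$.
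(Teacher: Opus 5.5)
Your proposal is correct and is the standard argument behind the paper's citation of Temam and Robinson; the paper gives no proof of its own. That argument is Hölder's inequality with $\nabla v$ in $\mathbb{L}^2$, then $|u|_{L^4}\le C|u|^{1/4}\|u\|^{3/4}$ from $H^1_0\hookrightarrow L^6$, then $|w|_{L^\infty}\le C\|w\|_{\mathbb{H}^2}\le C|Aw|$ by Stokes regularity. The density step is harmless but unnecessary: for $u\in\mathbb{L}^2$, $\nabla v\in\mathbb{L}^2$ and $w\in\mathbb{L}^\infty$ the integrand is already in $L^1$, so Hölder applies directly.
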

	Furthermore, we can define a continuous bilinear operator $ \widetilde{B}$ from $ V \times V $ into $ V'$ by \[ \left \langle \widetilde{B}(u,v),w \right \rangle _{V',V} = \widetilde{b}(u,v,w), \]
	where $ \widetilde{b}(u,v,w) = b(u,v,w) - b(w,v,u). $
	The following results hold for $ \widetilde{B}$
	\begin{align*}
		(i)\ & \left \langle \widetilde{B}(u,v),w \right \rangle_{V',V} = - \left \langle \widetilde{B}(w,v),u \right \rangle_{V',V}, \\
		(ii)\ &  \left \langle \widetilde{B}(u,v),u \right \rangle_{V',V} \equiv 0,
	\end{align*}
	for every  $ u,v,w \in V $.
	
	\subsection{Existence and uniqueness of weak solutions to the viscous Camassa - Holm equations}
	
	\begin{definition}\label{VCHE_weaksolution}
		Let $ u \in L^2(0,T;\mathbb{L}^2(\Omega)) $ and let $ T > 0 $. A function \[ y \in L^\infty(0,T;\mathbb{H}_{\sigma}^2(\Omega)) \cap L^2(0,T;\mathbb{H}_{\sigma}^3(\Omega)) \] with $\displaystyle \frac{dy}{dt} \in L^2(0,T;V)  $ is said to be a weak solution to problem \eqref{1.1} on the interval $ (0,T)  $ if it satisfies
		\begin{align} \label{weak solution}
			\left \langle \partial_{t}y + \alpha^2 A\partial_{t}y, w \right \rangle_{V', V} &+ \nu \left \langle A(y + \alpha^2 Ay), w \right \rangle_{V', V} \nonumber \\
			& + \left \langle \widetilde{B}(y, y+\alpha^2 Ay), w \right \rangle_{V', V} = \langle u,w \rangle_{V',V}
		\end{align}
		for every $ w \in V $ and for almost every $ t \in [0,T] $. Moreover, $ y(0) = y_0 $ in $ D(A) $.
	\end{definition}
	Here, equation \eqref{weak solution} is understood in the following sense: for a.e. $ t_0, t \in [0,T] $ and for all $ w \in V $, we have
	\begin{align*}
		(y(t) &+ \alpha^2 Ay(t),w) - (y(t_0) + \alpha^2 Ay(t_0),w) + \nu \int_{t_0}^{t} (y(s) + \alpha^2 Ay(s),Aw) ds \\
		&+ \int_{t_0}^{t} \left \langle \widetilde{B}(y(s),y(s) + \alpha^2 Ay(s)),w \right \rangle {_{V', V}} ds = \int_{t_0}^{t} (u,w) ds.
	\end{align*}
	The following theorem is proved by using the arguments in \cite{schonbek2008} (or \cite{Coutand}).
	\begin{theorem}\label{existence_uniqueness}
		For $ u \in L^2(0,T;\mathbb{L}^2(\Omega)), T >0 $ and $ y_0 \in D(A) $ given, there exists a unique weak solution to problem \eqref{1.1} on the interval $ (0, T) $ in the sense of Definition \ref{VCHE_weaksolution}. Moreover, there exists a constant $ C $ such that the function $ u $ satisfies the following estimate for all $ t \in [0, T] $,
		\begin{align*}
			|y(t)|^2 &+\alpha^2 \|y(t)\|^2+\nu \int_{0}^{t}(\|y(s)\|^2+\alpha^2 |Ay(s)|^2)ds \\ & \leq |y_0|^2 + \alpha^2 \|y_0\|_{D(A)}^2+ C \|u\|^2_{L^2(0,T;\mathbb{L}^2(\Omega))}. 
		\end{align*}
	\end{theorem}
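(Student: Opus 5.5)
The plan is the standard Faedo--Galerkin scheme in the eigenbasis $\{w_i\}$ of the Stokes operator $A$, followed by compactness arguments and an energy argument for uniqueness.

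\textbf{Approximate problems.} For each $m$, let $P_m$ be the orthogonal projection onto $\mathrm{span}\{w_1,\dots,w_m\}$. We look for
\[
y_m(t)=\sum_{i\le m} g_{im}(t)w_i
\]
solving the projected version of \eqref{weak solution} with $w=w_j$, $j\le m$, and $y_m(0)=P_m y_0$. Since $(I+\alpha^2A)$ is diagonal and invertible on this span, this is a finite system of ODEs with locally Lipschitz (quadratic) nonlinearity. It therefore has a local solution, which becomes global once the a priori bounds below are established.

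\textbf{Energy estimate.} Take $w=y_m$ and use property (ii) of $\widetilde B$, namely $\langle \widetilde B(y_m,y_m+\alpha^2Ay_m),y_m\rangle=0$. This gives
\[
\frac12\frac{d}{dt}\bigl(|y_m|^2+\alpha^2\|y_m\|^2\bigr)+\nu\bigl(\|y_m\|^2+\alpha^2|Ay_m|^2\bigr)=(u,y_m).
\]
Bound the right-hand side by $|u||y_m|\le C_P|u|\|y_m\|\le \frac{\nu}{2}\|y_m\|^2+C|u|^2$ using Poincar\'e, then integrate in time. Together with $|P_my_0|\le|y_0|$ and $\|P_my_0\|\le\|y_0\|\lesssim\|y_0\|_{D(A)}$, this yields exactly the estimate stated in the theorem, uniformly in $m$.

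\textbf{Higher-order estimates.} The regularity required in Definition \ref{VCHE_weaksolution} needs more. Test with $Ay_m$; this is admissible because $Ay_m$ lies in the span. One gets
\[
\frac12\frac{d}{dt}\bigl(\|y_m\|^2+\alpha^2|Ay_m|^2\bigr)+\nu\bigl(|Ay_m|^2+\alpha^2\|Ay_m\|^2\bigr)
=-\widetilde b(y_m,y_m+\alpha^2Ay_m,Ay_m)+(u,Ay_m).
\]
This is the main obstacle: the trilinear term has to be controlled by $\frac{\nu\alpha^2}{2}\|Ay_m\|^2$ plus a quantity that Gronwall can absorb.

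The term $\widetilde b(y_m,Ay_m,Ay_m)$ reduces to $-b(Ay_m,Ay_m,y_m)$ after using $b(u,v,v)=0$. Write $v=Ay_m$. Then
\[
|b(v,v,y_m)|\le \|v\|_{L^3}\|\nabla v\|_{L^2}\|y_m\|_{L^6}\le C|Ay_m|^{1/2}\|Ay_m\|^{3/2}\|y_m\|,
\]
and by Young's inequality this is at most $\varepsilon\|Ay_m\|^2+C\|y_m\|^4|Ay_m|^2$. The lower-order pieces are handled similarly with Lemma \ref{lem2.1}. Since $\|y_m\|^2$ is bounded uniformly by the energy estimate and $|Ay_m|^2\in L^1(0,T)$, Gronwall gives
\[
y_m \text{ bounded in } L^\infty(0,T;D(A))\cap L^2(0,T;\mathbb H^3_\sigma),
\]
using $\|Ay\|\sim\|y\|_{\mathbb H^3}$.

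\textbf{Bound on the time derivative.} Apply $(I+\alpha^2A)^{-1}$ to the equation. The operator $(I+\alpha^2A)^{-1}$ maps $D(A)'$ to $H$ and $V'$ to $V$. Moreover $A(y+\alpha^2Ay)$ is bounded in $L^2(0,T;V')$, and the product estimates place $\widetilde B(y_m,y_m+\alpha^2Ay_m)$ in $L^2(0,T;V')$. Together these give $\partial_t y_m$ bounded in $L^2(0,T;V)$.

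\textbf{Passage to the limit.} The compact embedding $W(0,T;H^3,V)\hookrightarrow L^2(0,T;D(A))$ gives strong convergence in $L^2(0,T;D(A))$, together with weak-$*$ and weak convergences in the higher spaces. Strong times weak convergence then lets us pass to the limit in the nonlinear term, so the limit solves \eqref{weak solution}. Continuity into $D(A)$, via $W\hookrightarrow L^\infty(0,T;\mathbb H^2_\sigma)$ with time-continuity, recovers $y(0)=y_0$. The energy estimate passes to the limit by weak lower semicontinuity.

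\textbf{Uniqueness.} Let $\delta=y_1-y_2$ and test the difference equation with $\delta$. By property (ii), the term $\widetilde b(\delta,y_1+\alpha^2Ay_1,\delta)$ vanishes, and the remaining term is
\[
\widetilde b(y_2,\delta+\alpha^2A\delta,\delta)\le C\|y_2\|_{\mathbb H^3}\bigl(|\delta|^2+\|\delta\|^2\bigr).
\]
To obtain this bound, integrate by parts to move derivatives off $A\delta$ onto the smoother $y_2$. Since $\|y_2\|_{\mathbb H^3}\in L^2(0,T)\subset L^1(0,T)$, Gronwall yields $\delta=0$.
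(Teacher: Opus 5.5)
Your Faedo--Galerkin argument is correct and is essentially the route the paper intends; the paper gives no proof of its own and simply defers to the Galerkin-based arguments of the cited works, whose energy, $D(A)$-level and time-derivative estimates match the ones you wrote down. The one step worth tightening is uniqueness: in a bounded domain you cannot literally integrate by parts through the nonlocal $A\delta$, but your bound still holds because $A\delta=-\Delta\delta+\nabla\pi$ and $\widetilde b(y_2,\nabla\pi,\delta)=0$ by symmetry of the Hessian of $\pi$. Alternatively, you can avoid this entirely by writing $\widetilde b(y_2,A\delta,\delta)=-b(y_2,\delta,A\delta)+b(\delta,y_2,A\delta)$ and absorbing it into the dissipation term $\nu\alpha^2|A\delta|^2$.
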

	\begin{remark}
		{\rm The boundedness of $ y $ in $ L^{\infty}(Q)^3 $ can be shown if $ u $ is bounded in $ L^2(0,T;\mathbb{L}^2(\Omega)) $. }
	\end{remark}
	
	\subsection{The control-to-state mapping}
	Let us study the behavior of the mapping: right-hand side $ \mapsto $ solution, the so-called \textit{control-to-state mapping} and its Fréchet differentiability.
	\begin{definition} \label{Smapping}
		Consider the system \eqref{1.1}. The control-to-state mapping $ u \mapsto y $, where $ y $ is the weak solution of \eqref{1.1} with fixed initial value $ y_0$, is denoted by $ S $, i.e. $ y = S(u) $.
	\end{definition}
	Let us recall the following known results. Taking into account that we refer the interested readers to \cite{AG2024} for the proofs of these lemma and theorems.
	\begin{lemma}\label{LipchitzS}
		Let $y_1,y_2 \in W(0,T;H^3,V)$ be the weak solutions of \eqref{1.1} with the right-hand sides of the first equations equal to $u_1$ and $u_2$ in $L^2(Q)^3$ and the initial data equal to $y_{01}$ and $y_{02}$, respectively. Then there exists a constant $C$ such that the following estimate holds
		\begin{align}
			\| y_1 - y_2 \|^2_{W(0,T;H^3,V)} \leq C\left(\|y_{01} - y_{02}\|_{D(A)}^2 + \| u_1 - u_2\|_{L^2(Q)^3}^2 \right).
		\end{align}
	\end{lemma}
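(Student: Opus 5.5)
The plan is to derive an energy estimate for the difference $z = y_1 - y_2$ at the level of the norm $\|z\|^2 + \alpha^2 |Az|^2$, which is the natural energy here since the weak formulation is tested against $Az$. Subtracting the two weak formulations gives
\begin{align*}
\partial_t (z + \alpha^2 A z) + \nu A(z + \alpha^2 A z) + \widetilde{B}(y_1, y_1 + \alpha^2 A y_1) - \widetilde{B}(y_2, y_2 + \alpha^2 A y_2) = u_1 - u_2 ,
\end{align*}
with $z(0) = y_{01} - y_{02}$. The nonlinear difference splits bilinearly as
\begin{align*}
\widetilde{B}(z, y_1 + \alpha^2 A y_1) + \widetilde{B}(y_2, z + \alpha^2 A z).
\end{align*}

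Testing with $Az$ is legitimate because $z \in L^2(0,T;\mathbb{H}^3_\sigma)$ and $z_t \in L^2(0,T;V)$. Using the definition of $A^2$ on $\mathbb{H}^3_\sigma$ and a Lions--Magenes type identity, this gives
\begin{align*}
\frac12 \frac{d}{dt}\left(\|z\|^2 + \alpha^2 |Az|^2\right) + \nu\left(|Az|^2 + \alpha^2 \|Az\|^2\right) = (u_1 - u_2, Az) - \langle \text{nonlinear terms}, Az\rangle .
\end{align*}
The control term is handled by Young's inequality: $(u_1 - u_2, Az) \le C|u_1 - u_2|^2 + \tfrac{\nu}{4}|Az|^2$. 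For the nonlinear terms I would use Lemma \ref{lem2.1} and Sobolev embeddings ($\mathbb{H}^2 \hookrightarrow L^\infty$, $\mathbb{H}^1 \hookrightarrow L^6$), together with the regularity $y_i \in L^\infty(0,T;\mathbb{H}^2) \cap L^2(0,T;\mathbb{H}^3)$ from Theorem \ref{existence_uniqueness}. The target is a bound of the form
\begin{align*}
\phi(t)\left(\|z\|^2 + \alpha^2 |Az|^2\right) + \tfrac{\nu\alpha^2}{2}\,\|Az\|^2, \qquad \phi \in L^1(0,T),
\end{align*}
where $\phi$ depends on the norms of $y_1, y_2$ in $W(0,T;H^3,V)$. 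Gronwall's inequality then bounds $\|z\|_{L^\infty(0,T;D(A))}^2 + \|z\|_{L^2(0,T;\mathbb{H}^3)}^2$ by $C\left(\|z(0)\|_{D(A)}^2 + \|u_1 - u_2\|_{L^2(Q)^3}^2\right)$.

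The main obstacle is the highest-order term $\langle \widetilde{B}(y_2, \alpha^2 A z), Az\rangle$, which contains $b(y_2, Az, Az) - b(Az, Az, y_2)$. The first piece vanishes by antisymmetry. For the second, I would integrate by parts so that one derivative lands on $y_2$, and bound it by $C\|y_2\|_{\mathbb{H}^2}\,\|Az\|\,|Az|$, using $\nabla y_2 \in L^6$, $y_2 \in L^\infty$, and $Az \in V \subset L^6$. Absorbing $\|Az\|$ into the dissipation leaves $C\|y_2\|_{\mathbb{H}^2}^2|Az|^2$, which is acceptable. The term $\widetilde{B}(z, \alpha^2 A y_1)$ tested with $Az$ can be bounded analogously by $C\|z\|_{\mathbb{H}^2}\,\|y_1\|_{\mathbb{H}^3}\,\|Az\|$. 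The resulting coefficient $\|y_1\|_{\mathbb{H}^3}^2$ is only $L^1$ in time, which is precisely why Gronwall is needed.

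Finally, the time derivative is recovered from the equation. We have $z_t = (I + \alpha^2 A)^{-1}\bigl[u_1 - u_2 - \nu A(z + \alpha^2 A z) - \text{nonlinear terms}\bigr]$. The operator $(I + \alpha^2 A)^{-1}$ maps $V'$ into $V$, and the bracket is bounded in $L^2(0,T;V')$ by the previous estimates. This gives the bound on $\|z_t\|_{L^2(0,T;V)}$ and completes the estimate in the $W(0,T;H^3,V)$ norm.
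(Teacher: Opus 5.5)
Your proposal is correct. The paper gives no proof of this lemma; it defers to \cite{AG2024}. Your route is the standard argument: subtract the equations, test with $Az\in V$ to get the energy $\|z\|^2+\alpha^2|Az|^2$ with dissipation $\nu(|Az|^2+\alpha^2\|Az\|^2)$, kill $b(y_2,Az,Az)$ by antisymmetry and move the derivative in $b(Az,Az,y_2)$ onto $y_2$, close with Gronwall against an $L^1(0,T)$ coefficient, and recover $z_t$ through the isomorphism $(I+\alpha^2A)^{-1}:V'\to V$. It is also the same energy method the paper uses for the adjoint state in Lemma \ref{adj_wellposedness}, where it tests with $A\overline{\lambda}_N$.

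One point is worth making explicit. As written, your $C$ depends on $\|y_1\|_{L^2(0,T;\mathbb{H}^3)}$ and $\|y_2\|_{L^\infty(0,T;\mathbb{H}^2)}$. If you want it controlled by the data alone, uniformly for controls in bounded sets such as $U_{a,b}$, use $b(z,Ay_1,Az)=-b(z,Az,Ay_1)$, $b(Az,Ay_1,z)=-b(Az,z,Ay_1)$ and $|b(Az,y_2,Az)|\le C|Az|^{1/2}\|Az\|^{3/2}\|y_2\|$. Then the Gronwall coefficient involves only $\|y_i\|_{L^\infty(0,T;V)}$ and $\|y_1\|_{L^2(0,T;D(A))}$, which Theorem \ref{existence_uniqueness} bounds.
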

	
	\begin{theorem}\label{1stS}
		The control-to-state mapping is Fréchet differentiable as mapping from $ L^2(0,T;\mathbb{L}^2(\Omega)) $ to $ W(0,T;H^3,V) $. The derivative at $ \overline{u} \in L^2(0,T;\mathbb{L}^2(\Omega)) $ in direction $ k \in L^2(0,T;\mathbb{L}^2(\Omega)) $ is given by $ S'(\overline{u} )k = y $, where $ y $ is the weak solution of 
		\begin{equation} \label{linearized eq}
			\begin{cases}
				\partial_{t} (y + \alpha^{2} Ay) + \nu A(y + \alpha^{2} Ay) + \widetilde{B}'(\overline{y}, \overline{y}+\alpha^2 A \overline{y})y = k \ \text{in} \ L^2(0,T;V'),\\
				y(0)=0 \ \text{in} \ D(A),
			\end{cases}
		\end{equation}
		with $ \overline{y} = S(\overline{u}) $ and $ \widetilde{B}'(\overline{y}, \overline{y}+\alpha^2 A \overline{y})y = \widetilde{B}(y, \overline{y} + \alpha^{2} A \overline{y}) + \widetilde{B}(\overline{y}, y + \alpha^{2}A y) $.
	\end{theorem}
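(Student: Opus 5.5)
To prove Theorem \ref{1stS}, I will first show that the linearized problem \eqref{linearized eq} is well posed in $W(0,T;H^3,V)$, and then show that the remainder of the first-order expansion of $S$ is $o(\|k\|)$.

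\textbf{Step 1: well-posedness of \eqref{linearized eq}.} Fix $\overline{u}$ and $\overline{y}=S(\overline{u})\in W(0,T;H^3,V)\hookrightarrow L^\infty(0,T;\mathbb{H}^2_\sigma)$. The equation is linear in $y$, so I will use a Galerkin scheme on the eigenfunctions $\{w_i\}$ of $A$. Testing with $y$ uses $\langle \widetilde{B}(\overline{y},y+\alpha^2Ay),y\rangle=0$, which is property (ii) with the roles arranged suitably. For a stronger estimate I will test with $Ay$, i.e.\ work in the $V$ and $D(A)$ norms, so that the energy controls $\|y\|^2+\alpha^2|Ay|^2$ and the dissipation controls $|Ay|^2+\alpha^2\|Ay\|^2$. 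The term $\widetilde{B}(y,\overline{y}+\alpha^2A\overline{y})$ is handled with Lemma \ref{lem2.1}, using $\overline{y}+\alpha^2A\overline{y}\in L^2(0,T;V)\cap L^\infty(0,T;H)$. The term $\widetilde{B}(\overline{y},y+\alpha^2Ay)$ is handled using $\overline{y}\in L^\infty(0,T;D(A))$. Both are absorbed via Young's inequality and closed with Gronwall's inequality, with coefficients in $L^1(0,T)$ built from $\|\overline{y}\|_{W}$. This gives
\[
\|y\|_{L^\infty(0,T;D(A))}+\|y\|_{L^2(0,T;\mathbb{H}^3_\sigma)}\le C\|k\|_{L^2(Q)^3}.
\]
A bound on $\partial_t y$ in $L^2(0,T;V)$ then follows from the equation, after applying $(I+\alpha^2A)^{-1}$, which maps $V'$ to $V$ through $D(A)$. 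Passing to the limit in the Galerkin scheme and using uniqueness (by linearity and the same estimate) defines a bounded linear map $k\mapsto y$ from $L^2(0,T;\mathbb{L}^2(\Omega))$ to $W(0,T;H^3,V)$.

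\textbf{Step 2: the remainder.} Set $y_k=S(\overline{u}+k)$, $z=y_k-\overline{y}$ and $r=z-y$, where $y$ solves \eqref{linearized eq}. Subtracting the equations and using bilinearity of $\widetilde{B}$, $r$ satisfies
\[
\partial_t(r+\alpha^2Ar)+\nu A(r+\alpha^2Ar)+\widetilde{B}'(\overline{y},\overline{y}+\alpha^2A\overline{y})r=-\widetilde{B}(z,z+\alpha^2Az),\qquad r(0)=0.
\]
This is exactly the linearized equation with right-hand side $f=-\widetilde{B}(z,z+\alpha^2Az)$, no longer in $L^2(Q)^3$.

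\textbf{Step 3: the main obstacle.} I must redo the Step 1 estimate for a source $f$ in a weaker space, namely $L^2(0,T;V')$ or better. Using Lemma \ref{lem2.1}(ii),
\[
\|\widetilde{B}(z,z+\alpha^2Az)\|_{V'}\le C\|z\|\,\|z+\alpha^2Az\|,
\]
so
\[
\|f\|_{L^2(0,T;V')}\le C\|z\|_{L^\infty(V)}\|z\|_{L^2(\mathbb{H}^3)}\le C\|z\|_W^2\le C\|k\|^2_{L^2(Q)^3}
\]
by Lemma \ref{LipchitzS}. The difficulty is that testing with $Ar$ requires $f$ against $Ar\in V$, which is fine. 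This gives control of $r$ in $L^\infty(D(A))\cap L^2(\mathbb{H}^3_\sigma)$, and then of $\partial_t r$ in $L^2(V)$ from the equation, with bound $C\|f\|_{L^2(V')}$. Hence
\[
\|S(\overline{u}+k)-S(\overline{u})-y\|_W\le C\|k\|^2=o(\|k\|).
\]
The delicate points are checking the exact pairing $\langle \widetilde{B}(\cdot,\cdot),Ar\rangle$ with the available regularity, and confirming that the Step 1 estimate genuinely only needs $f\in L^2(0,T;V')$. If it does not, I will estimate $f$ in $L^2(0,T;H)$ instead, using Lemma \ref{lem2.1}(iv)-type bounds with $z\in L^\infty(D(A))$ and $z+\alpha^2Az\in L^2(V)$.
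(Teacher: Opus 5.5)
Your proposal is correct. The paper does not prove this statement itself; it defers to \cite{AG2024}. Your route is the standard argument: well-posedness of the linearized problem from the $Ay$-tested energy estimate plus Gronwall, then the remainder $r$ solving the same linear problem with source $-\widetilde{B}(z,z+\alpha^2Az)$, whose $L^2(0,T;V')$-norm is at most $C\|k\|^2_{L^2(Q)^3}$.

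Your ``delicate point'' resolves favourably. The bound $|\langle f,Ar\rangle_{V',V}|\le\|f\|_{V'}\|Ar\|$ is absorbed by the dissipation $\nu\alpha^2\|Ar\|^2$, exactly as the paper handles $g'_y$ in its adjoint estimate, so the $L^2(0,T;H)$ fallback is unnecessary.

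There is one harmless slip. Property (ii) kills $\langle\widetilde{B}(y,\overline{y}+\alpha^2A\overline{y}),y\rangle$, not $\langle\widetilde{B}(\overline{y},y+\alpha^2Ay),y\rangle$. However, your $Ay$-test needs neither cancellation, since even $b(\overline{y},Ay,Ay)$ can simply be bounded by $\|\overline{y}\|_{L^\infty}\|Ay\|\,|Ay|$.
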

	
	\begin{theorem}\label{2ndS}
		The control-to-state mapping is twice continuously differentiable as mapping from $ L^2(0,T;\mathbb{L}^2(\Omega)) $ to $ W(0,T;H^3,V) $. The derivative at $ \overline{u} \in L^2(0,T;\mathbb{L}^2(\Omega))$ in directions $ u_1, u_2 \in L^2(0,T;\mathbb{L}^2(\Omega))$ is given by $ S''(\overline{u})(u_1,u_2) = y $, where $ y $ is a weak solution of
		\begin{equation} \label{2nd_S}
			\begin{cases}
				\partial_{t} (y + \alpha^{2} Ay) + \nu A(y + \alpha^{2} Ay) + \widetilde{B}'(\overline{y}, \overline{y}+\alpha^2 A \overline{y})y = -\widetilde{B}''(\overline{y},\overline{y}+\alpha^2 A\overline{y})(y_1,y_2) ,\\
				y(0)=0 \ \text{in} \ D(A),
			\end{cases}
		\end{equation}
		with $ \overline{y} = S(\overline{u}) $ and $ y_i = S'(\overline{u})u_i, i = 1,2 $.\\
		Here, the term $ \widetilde{B}''(\overline{y},\overline{y}+\alpha^2 A\overline{y})(y_1,y_2)  $ is decomposed as
		\[ \widetilde{B}''(\overline{y},\overline{y}+\alpha^2 A\overline{y})(y_1,y_2) = \widetilde{B}(y_1, y_2 + \alpha^2 Ay_2) + \widetilde{B}(y_2, y_1 + \alpha^2 A y_1). \]
	\end{theorem}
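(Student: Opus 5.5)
The plan is to write $\overline{y}=S(\overline{u})$, $y_i=S'(\overline{u})u_i$, and to show that $S'$ is Fréchet differentiable from $L^2(0,T;\mathbb{L}^2(\Omega))$ into $\mathcal{L}(L^2(0,T;\mathbb{L}^2(\Omega)),W(0,T;H^3,V))$, with derivative given by the solution $z$ of \eqref{2nd_S}. Continuity of $u\mapsto S''(u)$ will be checked afterwards.

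\textbf{Step 1: solvability of the linear problem.} First I would show that the linear problem
\[
\partial_t(z+\alpha^2Az)+\nu A(z+\alpha^2Az)+\widetilde{B}'(\overline{y},\overline{y}+\alpha^2A\overline{y})z=f,\qquad z(0)=0,
\]
has a unique solution in $W(0,T;H^3,V)$ for every $f\in L^2(0,T;V')$, together with the bound
\[
\|z\|_{W(0,T;H^3,V)}\le C\|f\|_{L^2(0,T;V')},
\]
where $C$ depends only on $\|\overline{y}\|_{W(0,T;H^3,V)}$. This is the linearized equation of Theorem \ref{1stS} with a more general right-hand side. I would prove it by Galerkin approximation, testing with $Az$ and with $z$. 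The coupling terms $\widetilde{b}(z,\overline{y}+\alpha^2A\overline{y},Az)$ and $\widetilde{b}(\overline{y},z+\alpha^2Az,Az)$ are controlled by Lemma \ref{lem2.1} and the regularity $\overline{y}\in L^\infty(0,T;\mathbb{H}^2_\sigma)\cap L^2(0,T;\mathbb{H}^3_\sigma)$, and Gronwall's inequality then closes the estimate.

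\textbf{Step 2: the forcing is admissible.} Next I would check that the bilinear map $(y_1,y_2)\mapsto\widetilde{B}(y_1,y_2+\alpha^2Ay_2)$ is continuous from $W\times W$ into $L^2(0,T;V')$. This follows from
\[
|\widetilde{b}(y_1,Ay_2,w)|\le C\|y_1\|_{\mathbb{H}^2}\,\|Ay_2\|\,\|w\|,
\]
which gives
\[
\|\widetilde{B}(y_1,y_2+\alpha^2Ay_2)\|_{L^2(V')}\le C\|y_1\|_{L^\infty(\mathbb{H}^2)}\|y_2\|_{L^2(\mathbb{H}^3)}.
\]
Combined with Step 1, this shows that \eqref{2nd_S} defines a bounded bilinear map $(u_1,u_2)\mapsto z$.

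\textbf{Step 3: the remainder estimate.} Fix $h$ and set $y^h=S(\overline{u}+h)$ and $\delta=y^h-\overline{y}$. Let $\eta_h=S'(\overline{u}+h)u_1-S'(\overline{u})u_1$. Subtracting the two linearized equations gives
\[
\partial_t(\eta_h+\alpha^2A\eta_h)+\nu A(\eta_h+\alpha^2A\eta_h)+\widetilde{B}'(\overline{y},\overline{y}+\alpha^2A\overline{y})\eta_h
=-\widetilde{B}''(\cdot)(\delta,\,S'(\overline{u}+h)u_1).
\]
The crucial observation is that $\widetilde{B}'$ is affine in $\overline{y}$, so no higher-order remainder appears here. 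Now set $r=\eta_h-z_h$, where $z_h$ solves \eqref{2nd_S} with directions $(u_1,h)$. Then $r$ solves the linear equation of Step 1 with forcing
\[
-\widetilde{B}''(\delta-S'(\overline{u})h,\;S'(\overline{u}+h)u_1)-\widetilde{B}''(S'(\overline{u})h,\;\eta_h).
\]
By Step 2, Lemma \ref{LipchitzS}, and the Fréchet differentiability of $S$ from Theorem \ref{1stS}, this forcing is $o(\|h\|)\,\|u_1\|$. Step 1 then gives $\|r\|_W=o(\|h\|)\|u_1\|$ uniformly in $u_1$, which is the claimed derivative formula.

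\textbf{Step 4: continuity of $S''$.} Finally, continuity of $u\mapsto S''(u)$ follows the same way: differences of the coefficients $\overline{y}$ enter linearly and are Lipschitz in $u$ by Lemma \ref{LipchitzS}.

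The main obstacle is Step 1, namely getting energy estimates at the $\mathbb{H}^3$ level for the linearized equation with a merely $L^2(V')$ forcing. The term $\widetilde{b}(\overline{y},A z,Az)$ does not vanish, because $\widetilde{b}(u,v,u)=0$ holds only in the first and third slots. I would handle it by rewriting $\widetilde{b}(\overline{y},Az,Az)=-b(Az,Az,\overline{y})$, using $b(\overline{y},Az,Az)=0$, and then bounding
\[
|b(Az,Az,\overline{y})|\le C|Az|\,\|Az\|\,\|\overline{y}\|_{\mathbb{H}^2}.
\]
Young's inequality then absorbs the factor $\|Az\|$ into the dissipation $\nu\alpha^2\|Az\|^2$.
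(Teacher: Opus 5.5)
\textbf{Assessment.} The paper does not prove this theorem itself; it only refers to \cite{AG2024}. So there is no in-paper argument to compare with. Judged on its own, your proposal is correct and is the standard direct route.

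\textbf{Step 1.} This is the right linear estimate. Besides the term you single out, $\alpha^2\widetilde{b}(\overline{y},Az,Az)=-\alpha^2 b(Az,Az,\overline{y})$, which Lemma~\ref{lem2.1}(3) bounds by $C|Az|\,\|Az\|\,|A\overline{y}|$, there are the terms coming from $\widetilde{B}(z,\alpha^2A\overline{y})$. These are bounded via Lemma~\ref{lem2.1}(3)--(4) by $C\|A\overline{y}\|\,|Az|^2$. Since $\|A\overline{y}\|\in L^2(0,T)\subset L^1(0,T)$, Gronwall closes the estimate. The bound $z_t\in L^2(0,T;V)$ then follows because $I+\alpha^2A$ is an isomorphism from $V$ onto $V'$.

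\textbf{Steps 2--3.} Step~2 is correct. Your algebra for the forcing of $r=\eta_h-z_h$ in Step~3 also checks out.

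\textbf{Facts to state explicitly.} Two facts are used only implicitly and should be written out.
\begin{enumerate}
\item You need $\|S'(\overline{u}+h)\|_{\mathcal{L}(L^2(Q)^3,W(0,T;H^3,V))}\le C$ uniformly for $\|h\|\le 1$. This follows from Step~1, because its constant is a nondecreasing function of $\|S(\overline{u}+h)\|_{W(0,T;H^3,V)}$, and that norm is bounded via Lemma~\ref{LipchitzS}.
\item You need $\|\eta_h\|_{W(0,T;H^3,V)}\le C\|h\|\,\|u_1\|$. This comes from applying Steps~1--2 to the equation for $\eta_h$, whose forcing is $-\widetilde{B}''(\delta,S'(\overline{u}+h)u_1)$, together with $\|\delta\|_{W(0,T;H^3,V)}\le C\|h\|$.
\end{enumerate}
The second bound is what makes the term $\widetilde{B}''(S'(\overline{u})h,\eta_h)$ of order $\|h\|^2\|u_1\|$. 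It is also exactly the continuity of $u\mapsto S'(u)$ that Step~4 relies on.

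\textbf{Alternative route.} Steps~1--2 are precisely the hypotheses of the implicit function theorem. Write $y=\overline{y}+w$ with $w$ in $\{w\in W(0,T;H^3,V): w(0)=0\}$. The map $w\mapsto \partial_t(y+\alpha^2Ay)+\nu A(y+\alpha^2Ay)+\widetilde{B}(y,y+\alpha^2Ay)$ into $L^2(0,T;V')$ is a continuous quadratic polynomial by Step~2. Its linearization is an isomorphism by Step~1. Invoking the theorem gives $S\in C^\infty$ at once, and the formula for $S''$ follows by differentiating the state equation twice, so Steps~3--4 become unnecessary. Your hands-on version buys self-containedness and explicit remainder bounds instead.
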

	
	\subsection{Assumptions on the objective functional and the set of admissible control}
	
	Recall that the objective functional of the so-called original optimal control problem is
	\begin{align}
		J(y,u) = \int_0^T g(t,y(t))\dt + \frac{\gamma}{2} \iint_Q \vert u(x,t) \vert^2 dxdt. \label{2.44}
	\end{align}
	
	We shall impose the following assumptions on the functions $g$.
	
	\begin{itemize}
		\item[(\textbf{A1})]\label{A1} The function $g: [0,T] \times V
		\rightarrow \mathbb{R}^+$ is strictly convex, twice continuously Fr\'echet differentiable on $V$. Furthermore, for every $r>0$, there exists $L_r>0$ independent of $t$ such that
		\begin{align*}
			|g(t,y_1)-g(t,y_2)|\leq L_r\|y_1-y_2\| , \quad \forall \, \|y_1\|+\|y_2\|\leq r,t\in [0,T].
		\end{align*}
	\end{itemize}
	
	We now give some examples of the objective functional.
	\begin{example}{\rm 
			A first typical choice for the functional $J$ is the quadratic functional given by 
			\begin{align}
				J(y,u) = \dfrac{\alpha_Q}{2}\iint_Q |y(x,t) - y_Q(x,t)|^2 \,dxdt \nonumber +\dfrac{\gamma}{2} \iint_Q |u(x,t)|^2\,dxdt,  \label{2.46}
			\end{align}
			where $\alpha_Q,\gamma \in \mathbb{R}$, $\gamma>0$, and the function $y_Q \in L^2(Q)^3$ is the desired state. \\
			Additionally, the first term in our objective functional $J$ can be replaced by the integral $\displaystyle \dfrac{\alpha_Q}{2}\iint_Q |y(x,t) - y_Q(x,t)|^p\,dxdt $ with $2\leq p <6$.
		}
	\end{example}
	
	\begin{example}{\rm 
			Beside the above quadratic functional,  another notable choice for the functional $J(y,u)$ is
			\begin{equation*}
				J(y,u)= \iint_Q |\nabla y(x,t) - \nabla y_d(x,t)|^2 \,dxdt + \textcolor{red}{\dfrac{\gamma}{2}} \iint_Q |u(x,t)|^2\,dxdt,
			\end{equation*}
			where $y_d \in L^2(0,T;D(A))$ is the desired state. }
	\end{example}
	
	\section{Existence of optimal solutions}
	
	In this section we will show that the optimal control problems $(\textbf{P}_k), k=1,2,3$ admit a global solution. To do this,  we need the following lemma.
	\begin{lemma} \label{lem3.1}
		If ${y_n}$ converges to $y$ in $W(0,T;H^3,V)$ weakly, then for any $v \in L^2(0,T;V)$, we get the following convergence
		\begin{align*}
			\langle \widetilde{B}(y_n, y_n+\alpha^2 Ay_n),v \rangle _{L^2(0,T;V'),L^2(0,T;V)} \rightarrow \langle \widetilde{B}(y,y+\alpha^2Ay),v \rangle _{L^2(0,T;V'),L^2(0,T;V)}
		\end{align*} as $n \rightarrow \infty$.
	\end{lemma}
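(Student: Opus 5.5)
The plan is to combine the compactness of $W(0,T;H^3,V)\hookrightarrow L^2(0,T;D(A))$ with the bilinear structure of $\widetilde B$, split into a strongly converging factor and a weakly converging factor. Since $y_n\rightharpoonup y$ weakly in $W(0,T;H^3,V)$, the sequence is bounded there. Hence $\{y_n\}$ is bounded in $L^\infty(0,T;\mathbb{H}^2_\sigma)\cap L^2(0,T;\mathbb{H}^3_\sigma)$, and $\{z_n:=y_n+\alpha^2Ay_n\}$ is bounded in $L^2(0,T;V)$ and in $L^\infty(0,T;H)$. By the compact embedding stated in Section 2, $y_n\to y$ strongly in $L^2(0,T;D(A))$ for the whole sequence: every subsequence has a further subsequence converging strongly, and the limit is identified as $y$ by the weak convergence. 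Also $z_n\rightharpoonup z:=y+\alpha^2Ay$ weakly in $L^2(0,T;V)$, because $A$ is bounded from $\mathbb{H}^3_\sigma$ to $V$.

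Write
\[
\widetilde b(y_n,z_n,v)-\widetilde b(y,z,v)=\widetilde b(y_n-y,z_n,v)+\widetilde b(y,z_n-z,v),
\]
integrated over $(0,T)$. Expanding $\widetilde b=b(u,z,w)-b(w,z,u)$, the first term is bounded using Lemma \ref{lem2.1}(2):
\[
\int_0^T|\widetilde b(y_n-y,z_n,v)|\,dt\le C\int_0^T\|y_n-y\|\,\|z_n\|\,\|v\|\,dt .
\]
By Hölder in time this is at most $C\|y_n-y\|_{L^\infty(0,T;V)}\|z_n\|_{L^2(V)}\|v\|_{L^2(V)}$. The sup-in-time factor is a problem, since $L^2(0,T;D(A))$ convergence gives only an $L^2$ control.

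To get around this, note that $\|z_n(t)\|\le C|A^{3/2}y_n(t)|$ is only $L^2$ in time, so I need $\|y_n-y\|$ in $L^\infty(0,T)$. I would upgrade the compactness via Aubin–Lions/Simon: $y_n$ is bounded in $L^\infty(0,T;\mathbb{H}^2)$ and $\partial_ty_n$ is bounded in $L^2(0,T;V)$. Hence $\{y_n\}$ is equicontinuous in $C([0,T];V)$ (indeed $\|y_n(t)-y_n(s)\|\le |t-s|^{1/2}\|\partial_t y_n\|_{L^2(V)}$) with values in a compact subset of $V$. Arzelà–Ascoli then gives $y_n\to y$ in $C([0,T];V)$, and the first term tends to $0$.

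Alternatively, one could bound via Lemma \ref{lem2.1}(4)-type estimates, putting the $D(A)$ norm on $y_n-y$ in $L^2$ and an $L^\infty$-in-time bound on another factor. The issue is that $v\in L^2(V)$ only, so the $C([0,T];V)$ route is safer.

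The second term $\int_0^T\widetilde b(y,z_n-z,v)\,dt$ is a fixed linear functional of $z_n-z$. For fixed $y\in L^\infty(0,T;\mathbb{H}^2)$ and $v\in L^2(0,T;V)$, the map
\[
w\mapsto\int_0^T\bigl[b(y,w,v)-b(v,w,y)\bigr]dt
\]
is continuous on $L^2(0,T;V)$, with bound $C\|y\|_{L^\infty(V)}\|w\|_{L^2(V)}\|v\|_{L^2(V)}$ by Lemma \ref{lem2.1}(2). Weak convergence $z_n\rightharpoonup z$ in $L^2(0,T;V)$ therefore sends this term to $0$, which concludes the proof.

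The main obstacle is the first term: securing strong convergence of $y_n$ in a norm that is uniform in time (namely $C([0,T];V)$), since the other factors $z_n$ and $v$ are only $L^2$ in time. This is where I will invoke the time-derivative bound and Arzelà–Ascoli.
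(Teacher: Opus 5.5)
Your argument is correct, but it takes a different route from the paper. The paper splits the difference as $\widetilde B(y_n-y,\,y+\alpha^2Ay)+\widetilde B\bigl(y_n,\,(y_n-y)+\alpha^2A(y_n-y)\bigr)$. It then uses the antisymmetry $b(u,v,w)=-b(u,w,v)$ to move the derivative off the third-order factor, so that $y+\alpha^2Ay$ (respectively $(y_n-y)+\alpha^2A(y_n-y)$) enters only through its $H$-norm. This gives
\[
\bigl|\langle \widetilde B(y_n,y_n+\alpha^2Ay_n)-\widetilde B(y,y+\alpha^2Ay),v\rangle\bigr|\le C\|y_n-y\|_{L^2(0,T;D(A))}\bigl(\|y\|_{L^\infty(0,T;D(A))}+\|y_n\|_{L^\infty(0,T;D(A))}\bigr)\|v\|_{L^2(0,T;V)},
\]
so only the compact embedding $W(0,T;H^3,V)\hookrightarrow L^2(0,T;D(A))$ from Section~2 is needed.

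You proceed differently in three ways. You keep $z_n$ in the first term and use the crude estimate (2) of Lemma~\ref{lem2.1} with no integration by parts. You compensate with the stronger compactness $y_n\to y$ in $C([0,T];V)$ via Simon/Arzel\`a--Ascoli, which you justify correctly; the limit is identified as in your $L^2(0,T;D(A))$ argument, since $W(0,T;H^3,V)\hookrightarrow C([0,T];V)$ continuously. Finally, you handle the second term by weak convergence of $z_n$ in $L^2(0,T;V)$.

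What each buys: your route is more elementary on the trilinear side but proves exactly the weak convergence stated. The paper's bound is uniform over the unit ball of $L^2(0,T;V)$, so it actually yields norm convergence of $\widetilde B(y_n,y_n+\alpha^2Ay_n)$ in $L^2(0,T;V')$, with a rate controlled by $\|y_n-y\|_{L^2(0,T;D(A))}$.

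Your reason for dismissing the ``Lemma~\ref{lem2.1}(4)-type'' alternative is unfounded. With $\|y_n-y\|_{D(A)}\in L^2(0,T)$, $|y+\alpha^2Ay|\in L^\infty(0,T)$ and $\|v\|\in L^2(0,T)$, H\"older in time closes without any sup-in-time control on $y_n-y$. That is precisely what the paper does.
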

	
	\begin{proof}
		Consider the difference $\langle \widetilde{B}(y_n, y_n+\alpha^2 Ay_n),v \rangle _{L^2(0,T;V'),L^2(0,T;V)} - \langle \widetilde{B}(y,y+\alpha^2Ay),v \rangle $, we deduce that
		\begin{align*}
			&\left| \langle \widetilde{B}(y_n, y_n+\alpha^2 Ay_n),v \rangle _{L^2(0,T;V'),L^2(0,T;V)} - \langle \widetilde{B}(y,y+\alpha^2 Ay),v \rangle_{L^2(0,T;V'),L^2(0,T;V)}  \right|\\
			&\leq \left| \langle \widetilde{B}(y_n-y,y+\alpha^2Ay), v \rangle_{L^2(0,T;V'),L^2(0,T;V)}  \right| \\
			&\hspace{5pt}+ \left| \langle \widetilde{B}(y_n, y_n- y +\alpha^2 A(y_n-y),v \rangle_{L^2(0,T;V'),L^2(0,T;V)}  \right|\\
			&\leq \int_0^T \left| b(y_n-y, y+\alpha^2Ay, v) \right| dt + \int_0^T \left| b(v, y+\alpha^2Ay, y_n-y) \right| dt \\
			&\hspace{5pt}+\int_0^T \left| b(y_n, y_n-y+\alpha^2 A(y_n-y),v) \right| dt + \int_0^T \left| b(v, y_n-y+\alpha^2 A(y_n-y),y_n) \right| dt \\
			&\leq C \int_0^T \Vert y_n - y \Vert_{D(A)} \vert y+\alpha^2Ay \vert \Vert v \Vert \ dt \\
			&\hspace{5pt}+ C \int_0^T \Vert y_n \Vert_{D(A)} \vert y_n - y+ \alpha^2 A(y_n - y) \vert \Vert v \Vert \ dt \\
			&\leq C \int_0^T \Vert y_n - y \Vert_{D(A)} \Vert y \Vert_{D(A)} \Vert v \Vert dt + C \int_0^T \Vert y_n \Vert_{D(A)} \Vert y_n - y \Vert_{D(A)} \Vert v \Vert dt \\
			&\leq C \Vert y_n - y \Vert_{L^2(0,T;D(A)} \Vert y \Vert_{L^\infty(0,T;D(A))} \Vert v \Vert_{L^2(0,T;V)} \\
			&\hspace{5pt}+ C \Vert y_n - y \Vert_{L^2(0,T;D(A))}  \Vert y_n \Vert_{L^\infty(0,T;D(A))} \Vert v \Vert_{L^2(0,T;V)}.   
		\end{align*}
		Since $y$ is bounded in $L^\infty(0,T;D(A))$, $\{y_n\}$ is bounded in $L^\infty(0,T;D(A))$ and $y_n \rightarrow y $ in $L^2(0,T;D(A))$. We conclude that \[ \langle \widetilde{B}(y_n, y_n+\alpha^2 Ay_n),v \rangle _{L^2(0,T;V'),L^2(0,T;V)} \rightarrow \langle \widetilde{B}(y,y+\alpha^2Ay),v \rangle _{L^2(0,T;V'),L^2(0,T;V)} \] as $n \rightarrow \infty$.
	\end{proof}
	
	\begin{theorem}\label{theo3.1}
		Problems $(\textbf{P}_k)$  admit at least a globally optimal solution $\overline{u} \in U_{a,b}$ with associated state $\overline{y} \in W(0,T;H^3,V)$  for every $k=1,2,3$.
	\end{theorem}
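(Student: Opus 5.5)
We use the direct method of the calculus of variations. Fix $k\in\{1,2,3\}$. Since $g\ge 0$, $\gamma>0$ and $j_k\ge 0$, the reduced functional $u\mapsto J_k(S(u),u)$ is bounded below by $0$ on $U_{a,b}$. Moreover $U_{a,b}\neq\emptyset$, because $0\in U_{a,b}$ ($a_i\le 0<b_i$). Hence the infimum $m:=\inf_{u\in U_{a,b}}J_k(S(u),u)$ is finite.

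\textbf{Step 1: a bounded minimizing sequence.} Take a minimizing sequence $\{u_n\}\subset U_{a,b}$. The set $U_{a,b}$ is bounded in $L^\infty(Q)^3$, hence in $L^2(Q)^3$, and it is convex and closed, hence weakly closed. We may therefore extract a subsequence with $u_n\rightharpoonup \overline u$ weakly in $L^2(Q)^3$ and $\overline u\in U_{a,b}$. Set $y_n=S(u_n)$. By the a priori estimate of Theorem \ref{existence_uniqueness}, together with the regularity encoded in Definition \ref{VCHE_weaksolution}, the sequence $\{y_n\}$ is bounded in $W(0,T;H^3,V)$ and in $L^\infty(0,T;D(A))$. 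Here we use that $\|y_n\|_{W}\le \|S(0)\|_W+C\|u_n\|_{L^2}$ by Lemma \ref{LipchitzS} with equal initial data. Passing to a further subsequence, $y_n\rightharpoonup \overline y$ weakly in $W(0,T;H^3,V)$. By the compact embedding $W(0,T;H^3,V)\hookrightarrow L^2(0,T;D(A))$, we also get $y_n\to\overline y$ strongly in $L^2(0,T;D(A))$, hence in $L^2(0,T;V)$.

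\textbf{Step 2: the limit is the state, $\overline y=S(\overline u)$.} We test the weak formulation with $v\in L^2(0,T;V)$, e.g. $v=\varphi(t)w$ with $\varphi\in C_c^\infty(0,T)$ and $w\in V$. The linear terms pass to the limit by weak convergence in $W(0,T;H^3,V)$ and in $L^2(Q)^3$. The nonlinear term converges by Lemma \ref{lem3.1}. The initial condition is preserved because $W(0,T;H^3,V)\hookrightarrow C([0,T];D(A))$ continuously, so $y\mapsto y(0)$ is weakly continuous and $\overline y(0)=y_0$. Uniqueness in Theorem \ref{existence_uniqueness} then gives $\overline y=S(\overline u)$.

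\textbf{Step 3: lower semicontinuity.} This is the step requiring most care, chiefly for the $g$-term. Since $y_n\to\overline y$ in $L^2(0,T;V)$, after a subsequence $y_n(t)\to\overline y(t)$ in $V$ for a.e. $t$. The bound in $L^\infty(0,T;D(A))$ gives $\sup_{n,t}\|y_n(t)\|\le r$. The local Lipschitz property (\textbf{A1}) then yields
\[
\Big|\int_0^T g(t,y_n(t))\,dt-\int_0^T g(t,\overline y(t))\,dt\Big|\le L_{2r}\int_0^T\|y_n(t)-\overline y(t)\|\,dt\to 0,
\]
so this term actually converges. Measurability of $t\mapsto g(t,y(t))$ is implicitly assumed, as in (\textbf{A1}). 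The term $\frac{\gamma}{2}\|u\|_{L^2(Q)^3}^2$ is convex and continuous, hence weakly lower semicontinuous. Each $j_k$ is a norm, thus convex. On the bounded set $U_{a,b}$ it is continuous with respect to the $L^2(Q)^3$ topology: by Hölder's inequality, $j_1(u)\le |Q|^{1/2}\|u\|_{L^2}$, $j_2(u)\le|\Omega|^{1/2}\|u\|_{L^2}$ and $j_3(u)\le |\Omega|^{1/2}\|u\|_{L^2}$. Hence each $j_k$ is weakly lower semicontinuous on $L^2(Q)^3$. Combining, since $\kappa\ge0$,
\[
J_k(\overline y,\overline u)\le\liminf_{n\to\infty} J_k(y_n,u_n)=m,
\]
so $\overline u$ is a global minimizer with state $\overline y\in W(0,T;H^3,V)$. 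The only delicate point is the passage to the limit in the nonlinear term, and this is already handled by Lemma \ref{lem3.1}.
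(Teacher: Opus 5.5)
Your proof is correct and follows the same direct-method skeleton as the paper: a bounded minimizing sequence, weak limits in $W(0,T;H^3,V)\times L^2(Q)^3$, identification of the limit state through Lemma \ref{lem3.1}, and lower semicontinuity. You actually supply the details of Steps 1--2 that the paper only states.

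The one genuine difference is how you handle the tracking term $\int_0^T g(t,y(t))\,dt$.
\begin{itemize}
\item The paper uses convexity and continuity of $g(t,\cdot)$ on $V$, which make it weakly lower semicontinuous. It combines this with the pointwise weak convergence $y_n(t)\rightharpoonup\overline y(t)$ in $V$, which follows from the continuous embedding $W(0,T;H^3,V)\hookrightarrow C([0,T];V)$. It then concludes with Fatou's lemma, which needs $g\ge 0$.
\item You instead use the compact embedding $W(0,T;H^3,V)\hookrightarrow L^2(0,T;D(A))$, the uniform $L^\infty(0,T;D(A))$ bound, and the local Lipschitz condition in (\textbf{A1}). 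This gives actual convergence of the tracking term, not just lower semicontinuity.
\end{itemize}

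Your route never uses convexity of $g$, so it would also cover nonconvex, locally Lipschitz $g$, and it yields a slightly stronger conclusion. The price is that it relies on compactness. The paper's route needs no compactness but leans on convexity and nonnegativity of $g$.

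A small simplification: you do not need the a.e.-in-$t$ subsequence. The Lipschitz estimate already gives a bound by $L_{2r}\sqrt{T}\,\|y_n-\overline y\|_{L^2(0,T;V)}$, which tends to $0$.

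For the sparsity terms, your H\"older bounds make precise the paper's brief claim that each $j_k$ is convex and continuous on $L^2(Q)^3$, hence weakly lower semicontinuous.
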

	
	\begin{proof}
		For an arbitrary control $u$ in $L^2(Q)^3$, by Theorem \ref{existence_uniqueness},  equations \eqref{1.1} has a unique weak solution $y \in W(0,T;H^3,V)$. Furthermore, since $J_k(y,u) = J(y,u) +j_k(y)$, there exists the infimum of $J_k$ over all admissible controls and states
		\begin{align*}
			0\leq \overline{J}_k:= \text{inf } J_k(y,u) < \infty.
		\end{align*}
		Moreover, there is a sequence $\{(y_n,u_n)\}$ of admissible pairs such that $J_k(y_n,u_n)$ converges to $\overline{J}_k$ as $n \rightarrow \infty$. 
		
		From there, the proof follows a standard technique for optimal control of PDEs. We present the main steps here for completeness only. 
		
		\textbf{Step 1:} The sequence $\{u_n\}$ is bounded in $L^2(Q)^3$ and $\{y_n\}$ is bounded in $W(0,T;H^3,V)$.
		
		\textbf{Step 2:} There exists a subsequence $(y_{n'},u_{n'})$ converging to $(\overline{y},\overline{u})$ weakly in $W(0,T;H^3,V)\times L^2(Q)^3$, which is an admissible pair.
		
		\textbf{Step 3}: We will show that $\overline{J}_k=J_k(\overline{y},\overline{u})$ for every $k=1,2,3$. 
		Since $j_1,j_2,j_3$ and $g$ are convex and continuous, each $j_k+g$ is consequently weakly lower semicontinuous for all $k=1,2,3$. Moreover, we get that $y_n$ converges to $\overline{y}$ weakly in $W(0,T;H^3,V)$ and $W(0,T;H^3,V)$ is continuously embedded in $C([0,T];V)$, which infer that $y_n(t)$ converges weakly to $\overline{y} (t)$ in $V$. Hence, $g(t,\overline{y}(t))\leq \underset{n'\rightarrow\infty}{\liminf} g(t, y_{n'}(t))$ and we get
		\begin{equation*}
			\int_{0}^{T} g(t,\overline{y}(t))\,dt \leq\displaystyle\int_{0}^{T} {\liminf_{n'\to \infty}} g(t,y_{n'}(t))\,dt \leq {\liminf_{n'\to\infty}} \int_0^Tg(t,y_{n'}(t))\,dt,
		\end{equation*}
		where the last inequality is deduced by using Fatou's lemma.
		Similar expressions can be proved for the norm functionals $j_1,j_2,j_3$. \\
		Therefore
		\begin{align*}
			J_k(\overline{y},\overline{u}) &= \displaystyle\int_{0}^{T} g(t,\overline{y}(t))\,dt +\frac{\gamma}{2}\iint_Q \vert u(x,t) \vert^2 \, dxdt + \kappa j_k (\overline{y}) \\ &\leq {\liminf_{n' \to \infty}} \displaystyle\int_{0}^{T} g(t,y_{n'}(t))\,dt+\frac{\gamma}{2}{\liminf_{n'\to \infty}} \displaystyle\iint_Q  \vert u_{n'}(x,t) \vert^2 \, dxdt + \kappa \,{\liminf_{n'\to \infty}} j_k(y_{n'}) \\ &\leq {\liminf_{n' \to \infty}} J_k(y_{n'},u_{n'})=\overline{J}_k.
		\end{align*}
		Since $(\overline{y},\overline{u}) $ is admissible, and $\overline{J}_k $ is the infimum over all admissible pairs, it follows that $\overline 
		{J}_k=J_k(\overline{y},\overline{u})$. The proof is complete.
	\end{proof}
	\section{First-order necessary optimality conditions}
	Given that $L^2(Q)^3$ is a Hilbert space and $U_{a,b}$ is a convex subset of $L^2(Q)^3$, we denote by $\mathcal{N}_{U_{a,b}}(u), \mathcal{T}_{U_{a,b}}(u)$
	the normal cone and the polar cone of tangents of $U_{a,b}$ at the point $u \in U_{a,b}$ respectively, i.e.
	\begin{align*}
		\mathcal{N}_{U_{a,b}}(u)&=\{z \in L^2(Q)^3: (z,v-u) \leq 0, \forall v \in U_{a,b}\}\\
		\mathcal{T}_{U_{a,b}}(u)&=\{z\in L^2(Q)^3: (z,v) \leq 0, \forall v\in \mathcal{N}_{U_{a,b}}(u)\}.
	\end{align*}
	
	Moreover, from Theorems \ref{1stS} and \ref{2ndS}, we see that the composite functional $\mathcal{J}(u)=J(S(u),u)$ is also of class $C^2$. Henceforth, instead of the two-variable functional $J(y,u)$, we will work with the reduced functional $\mathcal{J}(u)=J(S(u),u)$. The complete original optimal control problems can be reduced to
	\begin{center}
		$(\textbf{P}_k)$ \hspace{8pt} find $\min \mathcal{J}_k(u), k=1,2,3$ 
	\end{center}
	subject to the control constraint
	\begin{align*}
		u \in U_{a,b}.
	\end{align*}
	\begin{definition}\label{def4.1}
		A control $\overline{u}$ is said to be locally optimal (in $L^2$-sense) if there exists a constant $\rho >0$ such that
		\begin{equation} \label{4.1}
			\mathcal{J}(\overline{u})\leq \mathcal{J}(u)
		\end{equation}
		holds for all $u \in U_{a,b}$ with $\|u-\overline{u}\|_{L^2(Q)^3}\leq \rho$. If the inequality \eqref{4.1} is strict at every $u\ne \overline{u}$, $\overline{u}$ is called a strict local solution.
	\end{definition}
	
	For fixed $t\in [0,T]$, the first-order derivative of $g(t,\cdot)$ with respect to $y$ in direction $w \in V$ is $g'_y(t,y) \circ w$. We can also consider $g_y'(t, y)$ as an element of $V'$, then the duality pairing of $V'$ and $V$ is the continuation of the $H-$scalar product to $V' \times V$. It is denoted by $\langle\cdot,\cdot\rangle_{V',V}$, we write
	\begin{align*}
		\langle g_y'(t,y), w \rangle_{V',V}=\int_{\Omega}g_y'(t,y)w(x)\,dx.
	\end{align*}
	Before derive the first-order necessary conditions for problem $(\textbf{P})$ with the objective functional $\mathcal{J}$, due to the complexity of the nonlinear term $\widetilde{B}^*$, let us consider separately the adjoint equations.
	
	\begin{lemma}\label{adj_wellposedness}
		Let $\overline{u}$ be a locally optimal control with associated state $\overline{y}$. Then there exists a unique $\overline{\lambda} \in W(0,T;H^3,V)$, which is the weak solution of the equations
		\begin{align}\label{adj_eqt}
			\begin{split}
				\begin{cases}
					- \partial_t(\overline{\lambda}+\alpha^2A\overline{\lambda})+\nu A (\overline{\lambda}+ \alpha^2 A \overline{\lambda}) +\widetilde{B}'(\overline{y},y+\alpha^2A\overline{y})^*\overline{\lambda} = \textcolor{red}{g'_y(t,\overline{y}(t))} \\ \hspace{220pt}\text{ in }L^2(0,T;D(A)'),\\
					\overline{\lambda}(T) + \alpha^2 \textcolor{red}{A}\overline{\lambda}(T) = 0
				\end{cases}
			\end{split}
		\end{align}
		where $\left \langle \widetilde{B}'(\overline{y},\overline{y}+\alpha^2A\overline{y})^*\overline{\lambda}, w \right \rangle_{D(A)',D(A)}:=\tilde{b}(\overline{y},w+\alpha^2Aw,\overline{\lambda})+\tilde{b}(w,\overline{y}+\alpha^2A\overline{y},\overline{\lambda}).$
	\end{lemma}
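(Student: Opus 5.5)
The plan is to treat \eqref{adj_eqt} as a linear backward problem and prove well-posedness by a Faedo--Galerkin scheme. First reverse time: set $\mu(t)=\overline{\lambda}(T-t)$ and $\hat y(t)=\overline{y}(T-t)$. Then \eqref{adj_eqt} becomes the forward linear problem
\[
\partial_t(\mu+\alpha^2A\mu)+\nu A(\mu+\alpha^2A\mu)+\widetilde{B}'(\hat y,\hat y+\alpha^2A\hat y)^*\mu=g_y'(T-t,\hat y(T-t)),
\]
with zero initial datum. Here the coefficient $\hat y$ lies in $W(0,T;H^3,V)\subset L^\infty(0,T;\mathbb{H}^2_\sigma)$.

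Next I would check that the right-hand side $f(t)=g'_y(t,\overline{y}(t))$ is admissible. By (\textbf{A1}) and $\overline{y}\in C([0,T];V)$, the norms $\|\overline{y}(t)\|$ are bounded by some $r$. The local Lipschitz estimate then gives $\|g'_y(t,\overline{y}(t))\|_{V'}\le L_{2r}$ uniformly in $t$, so $f\in L^\infty(0,T;V')\subset L^2(0,T;D(A)')$.

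The Galerkin approximations use the eigenfunctions $w_i$ of $A$. Set $\mu_m=\sum_{i\le m}c_{im}(t)w_i$ and solve the linear ODE system obtained by testing against $w_1,\dots,w_m$; existence on $[0,T]$ is immediate because the system is linear. For the a priori estimate I would test with $\mu_m$ itself, which is legitimate since $A$ commutes with the projections. This yields
\[
\tfrac12\tfrac{d}{dt}\big(|\mu_m|^2+\alpha^2\|\mu_m\|^2\big)+\nu\big(\|\mu_m\|^2+\alpha^2|A\mu_m|^2\big)
=\langle f,\mu_m\rangle-\tilde b(\hat y,\mu_m+\alpha^2A\mu_m,\mu_m)-\tilde b(\mu_m,\hat y+\alpha^2A\hat y,\mu_m).
\]
The last term vanishes by property (ii) of $\widetilde B$. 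The middle term is handled with Lemma~\ref{lem2.1}, for instance (iii)--(iv), in the form
\[
|b(\hat y,A\mu_m,\mu_m)|+|b(\mu_m,A\mu_m,\hat y)|\le C\|\hat y\|_{D(A)}|A\mu_m|\,\|\mu_m\|.
\]
This is absorbed via Young: half goes into $\nu\alpha^2|A\mu_m|^2$ and the rest, $C\|\hat y\|_{L^\infty(D(A))}^2\|\mu_m\|^2$, is controlled by Gronwall. Gronwall then bounds $\mu_m$ in $L^\infty(0,T;V)\cap L^2(0,T;D(A))$.

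The main obstacle is reaching the regularity $W(0,T;H^3,V)$ claimed in the statement. That requires a higher-order estimate, obtained by testing with $A\mu_m$ and using $\|A\mu_m\|\sim|A^{3/2}\mu_m|$. It also requires bounding $\partial_t\mu_m$ in $L^2(0,T;V)$ by inverting $I+\alpha^2A$, which gains two derivatives, so $V'$ data gives $V$. The difficulty is that with $f$ only in $V'$ and $\hat y$ only in $L^2(0,T;\mathbb{H}^3)$, the terms $\tilde b(A\mu_m,\hat y+\alpha^2A\hat y,\cdot)$ involve third derivatives of $\hat y$. I would first try to estimate these with $\|\hat y\|_{\mathbb{H}^3}\in L^2(0,T)$, placed as a time-integrable Gronwall weight, using Lemma~\ref{lem2.1}(i)--(ii) together with Agmon-type bounds. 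For the $f$-term I would test $f$ against $A\mu_m$, which costs $\|f\|_{V'}\,\|A\mu_m\|$. If this fails, I would settle for $\overline{\lambda}\in L^\infty(0,T;D(A))\cap L^2(0,T;\mathbb{H}^3_\sigma)$ with $\partial_t(\overline{\lambda}+\alpha^2A\overline{\lambda})\in L^2(0,T;D(A)')$, which is exactly the sense of \eqref{adj_eqt}. In that case I would read the $W$ membership through the equation.

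To pass to the limit I would use weak-$*$ compactness together with Aubin--Lions, giving strong convergence in $L^2(0,T;V)$. Because the equation is linear in $\mu$ with fixed coefficient $\hat y$, every term passes to the limit directly, in the same way as in Lemma~\ref{lem3.1}. The terminal condition is recovered from continuity in time.

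Uniqueness follows by applying the same energy estimate to the difference of two solutions. The difference has zero data, so Gronwall gives that it is zero.
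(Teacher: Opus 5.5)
Your Galerkin scheme and the $\mu_m$-tested estimate are sound. They give existence and uniqueness directly, whereas the paper obtains $\overline{\lambda}$ abstractly as $S'(\overline{u})^*g$ and uses a Galerkin estimate only for regularity. But they give it only in $L^\infty(0,T;V)\cap L^2(0,T;D(A))$ with $\partial_t\mu\in L^2(0,T;H)$. The statement claims $\overline{\lambda}\in W(0,T;H^3,V)$, and you leave that part open, with the difficulty misplaced.

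The terms $\tilde b(A\mu_m,\hat y+\alpha^2A\hat y,\mu_m)$ that you flag are harmless. For example, $b(A\mu_m,A\hat y,\mu_m)=-b(A\mu_m,\mu_m,A\hat y)$ is bounded by $C\|A\mu_m\|\,|A\mu_m|\,|A\hat y|$, and your integrable-weight idea also works. The term that really needs an idea is $\alpha^2\tilde b(\hat y,A^2\mu_m,\mu_m)$. It carries five derivatives of $\mu_m$, and your sketch never mentions it. This is exactly the term the paper's proof works on: it integrates $b(\overline{y},\overline{\lambda}_N,A^2\overline{\lambda}_N)$ by parts once. A clean way to handle it:
\[
\tilde b(\hat y,A^2\mu_m,\mu_m)=-(\phi_m,A^2\mu_m),\qquad \phi_m:=(\hat y\cdot\nabla)\mu_m-(\mu_m\cdot\nabla)\hat y .
\]
Here $\phi_m$ is divergence-free and vanishes on $\partial\Omega$, so $\phi_m\in V$. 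Hence $(\phi_m,A^2\mu_m)=((\phi_m,A\mu_m))\le C|A\hat y|\,|A\mu_m|\,\|A\mu_m\|$, which is absorbed by $\nu\alpha^2\|A\mu_m\|^2$.

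Your fallback does not rescue the claim, for two reasons. First, $L^\infty(0,T;D(A))\cap L^2(0,T;\mathbb{H}^3_\sigma)$ is precisely the output of the higher-order estimate, so it is not available ``if this fails''. Second, $\partial_t(\overline{\lambda}+\alpha^2A\overline{\lambda})\in L^2(0,T;D(A)')$ only yields $\partial_t\overline{\lambda}\in L^2(0,T;H)$, because $(I+\alpha^2A)^{-1}$ maps $D(A)'$ onto $H$, not onto $V$. So you cannot read $\partial_t\overline{\lambda}\in L^2(0,T;V)$ off the equation in that form.

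What you need is $\widetilde{B}'(\hat y,\hat y+\alpha^2A\hat y)^*\mu\in L^2(0,T;V')$, and this rests on the same structure. For $w\in D(A)$, $\tilde b(\hat y,Aw,\mu)=-(\phi,Aw)=-((\phi,w))$ with $\phi=(\hat y\cdot\nabla)\mu-(\mu\cdot\nabla)\hat y\in V$. The remaining pieces are bounded by $C|A\hat y|\,|A\mu|\,\|w\|$ directly. With these two ingredients your route yields the full statement; without them it proves strictly weaker regularity than claimed.
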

	\begin{proof}
		We will show that the linear adjoint equations \eqref{adj_eqt} possess a weak solution $\overline{\lambda}$ that belongs to $W(0,T;H^3,V)$.
		
		We define $W_0$ as a closed linear subspace of $W(0,T;H^3,V)$ by
		\begin{equation*}
			W_0=\{y\in W(0,T;H^3,V): y(0)=0\}.
		\end{equation*}
		Since the operator $S'(\overline{u}): L^2(0,T;D(A)') \to W(0,T;H^3,V)$ is an isomorphism, for all $z \in W_0$, there exists an unique element $m \in L^{2}(0,T;D(A)')$ such that $z=S'(\overline{u})m$. Take $g \in W_0'$, denote $\overline{\lambda} = S'(\overline{u})^*g\in L^2(0,T;D(A))$, then we have the actions of adjoint operator $S'(\overline{u})^*$ as follow
		\begin{align}\label{4.4}
			\langle g, z \rangle_{W_0',W_0} = \langle m,\overline{\lambda}\rangle_{L^2(0,T;D(A)'),L^2(0,T;D(A))}.
		\end{align}
		Choose $g \in W_0'$ such that
		\begin{align}\label{4.5}
			\langle g,z \rangle_{W_0',W_0}= \int_0^T\langle g'_y(t,\overline{y}(t)), z(t) \rangle_{D(A)',D(A)}\,dt
		\end{align}
		for all $z \in W_0$.\\
		Now we shall prove that $\overline{\lambda} \in L^2(0,T;\mathbb{H}^3_\sigma)$. Let $\{w_k\}_{k\geq1}$ be the sequence of eigenfunctions of $A$ in $D(A)$. Then we can write $\overline{\lambda}(t)$ as 
		\begin{align*}
			\overline{\lambda}(t) = \sum_{k=0}^\infty d_k(t)w_k
		\end{align*}
		where $d_k(t)=(A\overline{\lambda}(t),Aw_k)$ are Fourier coefficients. Let $P_N$ be the projection map to from $D(A)$ to the space ${\rm span}\{w_1,\hdots,w_N\}$, i.e. $P_N\overline{\lambda}(t) = \sum_{k=0}^N d_k(t)w_k$. We can extend the operator $P_N$ to $D(A)'$ by defining the duality $\langle P_Ng,u\rangle_{D(A)',D(A)}=\langle g,P_Nu\rangle_{D(A)',D(A)}$. Denote $\overline{\lambda}_N=P_N\overline{\lambda}$ then we have the equation 
		\begin{equation} \label{estimate_adj_sol}
			-\partial_t(\overline{\lambda}_N+\alpha^2A\overline{\lambda}_N)+\nu A (\overline{\lambda}_N+ \alpha^2 A \overline{\lambda}_N) +P_N\widetilde{B}'(\overline{y},\overline{y}+\alpha^2A\overline{y})^*\overline{\lambda}_N = P_Ng'_y(t,\overline{y}).
		\end{equation}
		Notice that $w_k\in D(A^n)$ for all $n\geq1$ since we have $A^nw_k=\mu_k^n w_k$, where $\mu_n$ is the eigenvalue associated with $w_k$. Hence for $N\geq 0$, $\overline{\lambda}_N\in D(A^n)$.  We now estimate $\Vert \overline{\lambda}_N \Vert_{L^2(0,T;\mathbb{H}^3_\sigma)}$. Taking the inner product of \eqref{estimate_adj_sol} with $A\overline{\lambda}_N$ to obtain
		\begin{align}
			&-\frac{1}{2} \frac{d}{dt} \left(\Vert \overline{\lambda}_N \Vert^2 + \alpha^2 \vert A\overline{\lambda}_N \vert^2 \right) + \nu \left( \vert A\overline{\lambda}_N \vert ^2 + \alpha^2 \vert \nabla A \overline{\lambda}_N \vert^2 \right) + \widetilde{b} (A\overline{\lambda}_N, \overline{y} + \alpha^2 A \overline{y},  \overline{\lambda}_N) \nonumber \\
			&+\widetilde{b} (\overline{y}, A\overline{\lambda}_N+ \alpha^2 A^2 \overline{\lambda}_N, A \overline{\lambda}_N) = \langle g'_y(t,\overline{y}), A\overline{\lambda}_N\rangle_{V',V}. \label{innerproductAlambda}
		\end{align}
		Then since $ \overline{y} \in W(0,T;H^3,V) $ and $ \overline{\lambda}_N \in L^2(0,T;D(A)) $, we approach the nonlinear terms of \eqref{innerproductAlambda} in the following ways
		\begin{align*}
			\vert \langle g'_y(t,\overline{y}), A\overline{\lambda}_N\rangle_{V',V} \vert &\leq \| g'_y(t,\overline{y}) \|_{V'} \| A\overline{\lambda}_N \| \leq C\| g'_y(t,\overline{y}) \|^2_{V'} + \frac{\nu\alpha^2}{K} |\nabla A \overline{\lambda}_N |^2, \\
			\vert \widetilde{b}(A\overline{\lambda}_N, \overline{y} + \alpha^2 A \overline{y},  \overline{\lambda}_N) \vert &\leq \vert \widetilde{b}(A\overline{\lambda}_N,y, \overline{\lambda}_N) \vert + \alpha^2 \vert \widetilde{b}(A\overline{\lambda}_N, A\overline{y},\overline{\lambda}_N) \vert \\
			&\leq \vert b(A\overline{\lambda}_N, y, \overline{\lambda}_N) \vert +\vert b(\overline{\lambda}_N, y, A\overline{\lambda}_N) \vert \\
			&\hspace{10pt}+\alpha^2 \vert b(A\overline{\lambda}_N, Ay, \overline{\lambda}_N) \vert + \alpha^2 \vert b(A\overline{\lambda}_N, \overline{\lambda}_N, Ay) \vert \\
			&\leq C \vert \nabla A\overline{\lambda}_N \vert \vert A\overline{\lambda}_N \vert | A\overline{y} |+C\|\overline{\lambda}\||A\overline{\lambda}_N||Ay|\\
			&\leq C\vert A \overline{\lambda}_N \vert^2 + \dfrac{\nu\alpha^2}{K} \vert \nabla A\overline{\lambda}_N \vert^2 + \nu|A\overline{\lambda}_N|^2.
		\end{align*}
		To bound the term $ \widetilde{b} (\overline{y}, A\overline{\lambda}_N+ \alpha^2 A^2 \overline{\lambda}_N, A \overline{\lambda}_N) $, we start with the definition of the operator
		\begin{align*}
			\vert \widetilde{b} (\overline{y}, A\overline{\lambda}_N+ \alpha^2 A^2 \overline{\lambda}_N, A \overline{\lambda}_N) \vert &\leq \vert \widetilde{b}(\overline{y}, A\overline{\lambda}_N, \overline{\lambda}_N) \vert + \alpha^2 \widetilde{b}(y, A^2 \overline{\lambda}_N, \overline{\lambda}_N) \vert \\
			&\leq \vert b(\overline{y}, A\overline{\lambda}_N, \overline{\lambda}_N) \vert + \vert b(\overline{\lambda}_N, A\overline{\lambda}_N, \overline{y}) \vert \\
			&\hspace{10pt}+\alpha^2 \vert b(\overline{y}, A^2\overline{\lambda}_N, \overline{\lambda}_N) \vert + \alpha^2 \vert b(\overline{\lambda}_N, A^2 \overline{\lambda}_N, \overline{y}) \vert\\
			&\leq C \Vert \overline{y} \Vert_{D(A)} \vert \nabla A\overline{\lambda}_N \vert \Vert \overline{\lambda}_N \Vert \\
			&\hspace{10pt}+\alpha^2 \vert b(\overline{y}, \overline{\lambda}_N, A^2\overline{\lambda}_N) \vert + \alpha^2 \vert b(\overline{\lambda}_N,\overline{y}, A^2 \overline{\lambda}_N) \vert.
		\end{align*}
		Set $w_N=\Delta \overline{\lambda}_N$. For the term $b(\overline{y}, \overline{\lambda}_N, A^2\overline{\lambda}_N)$, using integral by parts, we continue as follows
		\begin{align*}
			b(\overline{y}, \overline{\lambda}_N, A^2\overline{\lambda}_N) &= \langle (\overline{y}\cdot\nabla)\overline{\lambda}_N,\Delta^2\overline{\lambda}_N\rangle_{V',V}\\
			&=\sum_{i,j=1}^3\int_\Omega \overline{y}_j \dfrac{\partial\overline{\lambda}_{Nj}}{\partial x_j}\Delta w_{Nj} \,dx\\
			&=\sum_{i,j=1}^3 \sum_{k=1}^3\int_\Omega \overline{y}_j \dfrac{\partial\overline{\lambda}_{Nj}}{\partial x_j}\dfrac{\partial^2w_{Nj}}{\partial x_k^2}\,dx\\
			&=-\sum_{i,j=1}^3 \sum_{k=1}^3\int_\Omega \dfrac{\partial}{\partial x_k}\left( \overline{y}_j \dfrac{\partial\overline{\lambda}_{Nj}}{\partial x_j}\right)\dfrac{\partial w_{Nj}}{\partial x_k}\,dx\\
			&=-\sum_{k=1}^3 \sum_{i,j=1}^3 \int_\Omega \left(\dfrac{\partial \overline{y}_j}{\partial x_k} \dfrac{\partial\overline{\lambda}_{Nj}}{\partial x_j} + \overline{y}_j\dfrac{\partial^2\overline{\lambda}_{Nj}}{\partial x_k\partial x_j}\right)\dfrac{\partial w_{Nj}}{\partial x_k} \,dx\\
			&=-\sum_{k=1}^3 \left(b(\nabla_k \overline{y}, \overline{
				\lambda}_N,\nabla_k w_N) + b( \overline{y}, \nabla_k \overline{
				\lambda}_N,\nabla_k w_N) \right)
		\end{align*}
		where $\nabla_k =\dfrac{\partial}{\partial x_k}$. Thus, \[\vert b(\overline{y}, \overline{\lambda}_N, A^2\overline{\lambda}_N) \vert \leq C | A\overline{y} | |A\overline{\lambda}| \vert \nabla A\overline{\lambda}_N \vert \leq C|A\overline{\lambda}_N|^2 +\dfrac{\nu\alpha^2}{K}|\nabla A\overline{\lambda}_N|^2.\]
		Estimate similarly for $b(\overline{\lambda}_N,\overline{y}, A^2 \overline{\lambda}_N)$, then choose $K=8$ and put all these bounds together yield
		\begin{equation} \label{bound_estimate_adjsol}
			-\frac{d}{dt} \left(\Vert \overline{\lambda}_N \Vert^2 + \alpha^2 \vert A\overline{\lambda}_N \vert^2 \right)  + \nu\alpha^2\vert \nabla A \overline{\lambda}_N \vert^2 \leq C \| g'_y(t,\overline{y}(t)) \|^2_{V'}.
		\end{equation}
		From \eqref{bound_estimate_adjsol}, integrating from $t$ to $T$ and passing through the limit as $N \to \infty$ infer that $\overline{\lambda}$ belongs to $L^2(0,T;\mathbb{H}^3_\sigma)$.\\
		Since $A$ is a self-adjoint operator, we have 
		\begin{align}
			\langle \nu A z,\overline{\lambda} \rangle_{L^2(0,T;D(A)'),L^2(0,T;D(A))} &=  \langle \nu A \overline{\lambda},z \rangle_{L^2(0,T;D(A))'),L^2(0,T;D(A))}, \label{4.6}\\
			\langle A^2 z,\overline{\lambda} \rangle_{L^2(0,T;D(A)'),L^2(0,T;D(A))} &=  \langle A^2 \overline{\lambda},z \rangle_{L^2(0,T;D(A))'),L^2(0,T;D(A))}, \label{4.6a}
		\end{align}
		and for the nonlinear term
		\begin{equation}
			\begin{split}
				\langle \widetilde{B}'(\overline{y}, & \overline{y} +\alpha^2 A\overline{y})z,\overline{\lambda}  \rangle_{L^2(0,T;D(A)'), L^2(0,T;D(A))} \\
				= \langle \widetilde{B}'(\overline{y},\overline{y} &+\alpha^2 A\overline{y})^*\overline{\lambda},z\rangle_{L^2(0,T;D(A)'),L^2(0,T;D(A))}.  \label{4.7}
			\end{split}
		\end{equation}
		
		We shall prove that $\overline{\lambda}_t$ exists as an element of $L^2(0,T;V)$. From \eqref{4.6}, \eqref{4.6a} and \eqref{4.7} we have
		\begin{equation}
			\begin{split}
				\int_0^T \langle z_t(t) + \alpha^2 Az_t(t),\overline{\lambda}(t)\rangle_{D(A)',D(A)}\,dt  \\
				= \int_0^T \langle g_y'(t,y(t)) - \nu A (\overline{\lambda}(t)+\alpha^2 A\overline{\lambda}(t)) - \widetilde{B}'(\overline{y},\overline{y} &+\alpha^2 A\overline{y})^*\overline{\lambda}(t),z(t) \rangle_{D(A)',D(A)}\,dt.\label{4.8}    
			\end{split}
		\end{equation}
		For $v \in D(A)$, set $z(t)=\varphi(t)v$ where $\varphi(t) \in C^{\infty}(0,T)$  and \[G(t)=g_y'(t,y(t)) - \nu A (\overline{\lambda}(t)+\alpha^2 A\overline{\lambda}(t)) - \widetilde{B}'(\overline{y},\overline{y} +\alpha^2 A\overline{y})^*\overline{\lambda}(t).\] Then we can write \eqref{4.8} as follows
		\begin{align} \label{4.9}
			\int_0^T \varphi'(t)\langle \overline{\lambda}(t) + \alpha^2 A\overline{\lambda}(t),v\rangle_{D(A)',D(A)}\,dt = \int_0^T \varphi(t)\langle G(t), v\rangle_{D(A)',D(A)} \,dt. 
		\end{align}
		Using Lemma 1.1 in \cite{temam1995navier}, we have
		\begin{align}\label{4.10}
			\int_0^T (\overline{\lambda}(t) + \alpha^2 A\overline{\lambda}(t))\varphi'(t) \,dt =\int_0^T G(t)\varphi(t) \,dt.
		\end{align}
		For convenience, define an operator $F:D(A) \to D(A)'$ by  $ F(u) = u + \alpha^2 Au$
		for all $u \in D(A)$. Then \eqref{4.10} can be rewritten as
		\begin{align*}
			\int_0^T F(\varphi'(t)\overline{\lambda}(t))\,dt = \int_0^T G(t)\varphi(t) \,dt. 
		\end{align*}
		From the definition of $F$, we have that $F$ is an isomorphism, and so is $F^{-1}$. Then from the properties of Bochner's integral (see e.g. \cite{yosida2012functional})
		\begin{align*}
			F\left(\int_0^T\overline{\lambda}(t)\varphi'(t)\,dt\right)  = \int_0^T G(t)\varphi(t) \,dt.
		\end{align*}
		Thus 
		\begin{align*}
			\int_0^T\overline{\lambda}(t)\varphi'(t)\,dt = F^{-1} \left(\int_0^T G(t)\varphi(t) \,dt\right) =\int_0^T F^{-1}(G(t))\varphi(t) \,dt.
		\end{align*}
		This holds for all $\varphi(t) \in C^{\infty}(0,T)$, therefore $\overline{\lambda}_t$ exists and $\overline{\lambda}_t = F^{-1}G$ in the weak sense. Moreover, since $G(t) \in L^2(0,T;D(A)')$, we obtain $\overline{\lambda}_t \in L^2(0,T;D(A))$. Thus $\overline{\lambda} \in W(0,T;H^3,V)$, and we have
		\begin{align*}
			\int_0^T \varphi'(t)\langle \overline{\lambda}(t) + \alpha^2 A\overline{\lambda}(t),v\rangle_{D(A)',D(A)}\,dt &= \int_0^T \langle \overline{\lambda}(t) + \alpha^2 A\overline{\lambda}(t),\varphi'(t)v\rangle_{D(A)',D(A)}\,dt\\
			=&-\int_0^T \langle \overline{\lambda}_t(t) + \alpha^2 A\overline{\lambda}_t(t),\varphi(t)v\rangle_{D(A)',D(A)}\,dt\\ =&-\int_0^T \varphi(t)\langle \overline{\lambda}_t(t) + \alpha^2 A\overline{\lambda}_t(t),v\rangle_{D(A)',D(A)}\,dt.
		\end{align*}
		This and \eqref{4.9} give 
		\begin{align*}
			\int_0^T \varphi(t)\langle \overline{\lambda}_t(t) + \alpha^2 A\overline{\lambda}_t(t),v\rangle_{D(A)',D(A)}\,dt = -\int_0^T \varphi(t)\langle G(t),v\rangle_{D(A)',D(A)} \,dt
		\end{align*}
		for all $\varphi \in C^{\infty}(0,T)$. Thus we obtain 
		\begin{align*}
			\langle \overline{\lambda}_t(t) + \alpha^2 A\overline{\lambda}_t(t),v\rangle_{D(A)',D(A)} = -\langle G(t),v\rangle_{D(A)',D(A)}
		\end{align*}
		for all $v\in V$. Taking $v=z(t)$ and integrating from $0$ to $T$ yield
		\begin{equation}
			\begin{split}
				\int_0^T  \langle & g'_y(t,\overline{y}(t)), z(t) \rangle_{D(A)',D(A)} \,dt = \int_0^T \langle -\overline{\lambda}_t(t) - \alpha^2 A \overline{\lambda}_t(t)  \\&+ \nu A (\overline{\lambda}(t)+\alpha^2 A\overline{\lambda}(t)) 
				+ \widetilde{B}'(\overline{y},\overline{y} +\alpha^2 A\overline{y})^*\overline{\lambda}(t),z(t)\rangle_{D(A)',D(A)} \,dt. \label{4.13}
			\end{split}
		\end{equation}
		This holds for all $z\in L^2(0,T;D(A))$, hence $\overline{\lambda}$ satisfies the first equation of \eqref{adj_eqt}. 
		
		By integration by parts, we have
		\begin{align}\label{4.15}
			{\langle z_t + \alpha^2 Az_t,\overline{\lambda} \rangle_{L^2(0,T;D(A)'),L^2(0,T;D(A))}} &= \int_0^T \langle z_t(t) + \alpha^2 Az_t(t),\overline{\lambda}(t) \rangle_{D(A)',D(A)} \,dt \nonumber \\ 
			&= \langle z(T) + \alpha^2 Az(T),\overline{\lambda}(T) \rangle_{D(A)',D(A)} \nonumber \\ - &\int_0^T  \langle z(t) + \alpha^2 Az(t),\overline{\lambda}_t(t) \rangle_{D(A)',D(A)}\,dt \nonumber\\
			&= \langle \overline{\lambda}(T) + \alpha^2 A\overline{\lambda}(T),z(T) \rangle_{D(A)',D(A)} \nonumber\\ - &\langle \overline{\lambda}_t + \alpha^2 A\overline{\lambda}_t,z \rangle_{L^2(0,T;D(A)'),L^2(0,T;D(A))}.
		\end{align}
		Combining \eqref{4.4}, \eqref{4.5}, \eqref{4.6}, \eqref{4.7} and \eqref{4.15}, we find 
		\begin{equation}\label{4.16}
			\begin{aligned}
				&\langle g'(\overline{y}), z\rangle_{L^2(0,T;D(A)'),L^2(0,T;D(A))}  = \langle \overline{\lambda}(T) + \alpha^2 A\overline{\lambda}(T),z(T) \rangle_{D(A)',D(A)} \\
				&+ \langle -\overline{\lambda}_t - \alpha^2 A \overline{\lambda}_t + \nu A (\overline{\lambda}+\alpha^2 A\overline{\lambda}) 
				+ \widetilde{B}'(\overline{y},\overline{y} +\alpha^2 A\overline{y})^*\overline{\lambda},z\rangle_{L^2(0,T;D(A)'),L^2(0,T;D(A))}.  
			\end{aligned}
		\end{equation}
		From \eqref{4.13} and \eqref{4.16}, we deduce that 
		$$\langle \overline{\lambda}(T) + \alpha^2 A\overline{\lambda}(T),z(T) \rangle_{D(A)',D(A)} = 0$$ for all $z \in W_0$, which implies $\overline{\lambda}$ satisfies the last equation of \eqref{adj_eqt}. Thus $\overline{\lambda}$ is the weak solution of \eqref{adj_eqt}. 
		For the uniqueness, we set $\delta \lambda=\lambda_1-\lambda_2$, where $\lambda_1$ and $\lambda_2$ are two distinct solutions of the equations \eqref{adj_eqt}. By applying analogous estimates to the equations satisfied by $\delta \lambda$, we deduce that $\delta \lambda=0$. The proof is complete.
	\end{proof}
	
	\begin{theorem}\label{theo4.1}
		Let $\overline{u}$ be a locally optimal control with associated state $\overline{y}$. Then there exists $\overline{\lambda}$, which is the weak solution of the adjoint equations \eqref{adj_eqt} such that
		\begin{equation}\label{4.2}
			\int_{0}^{T} \left( \overline{\lambda}(t)+\gamma \overline{u}(t)),m(t) \right) \,dt \ge 0,\quad  \forall m \in \mathcal{T}_{U_{a,b}}(\overline{u}).
		\end{equation}
		As a special case, the variational inequality
		\begin{equation} \label{4.3}
			\int_{0}^{T} \left( \overline{\lambda}(t)+\gamma \overline{u}(t),u(t)-\overline{u}(t) \right)\,dt \ge 0, \forall u \in U_{a,b},
		\end{equation}
		is satisfied.
	\end{theorem}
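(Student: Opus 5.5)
Since $\mathcal{J}$ is of class $C^2$ by Theorems \ref{1stS} and \ref{2ndS}, and $U_{a,b}$ is a closed convex subset of $L^2(Q)^3$, local optimality of $\overline{u}$ (Definition \ref{def4.1}) gives the abstract first-order condition $\mathcal{J}'(\overline{u})m\ge 0$ for every feasible direction $m=u-\overline{u}$, $u\in U_{a,b}$. Indeed, $\overline{u}+s(u-\overline{u})\in U_{a,b}$ and lies in the $\rho$-ball for small $s>0$, so $\mathcal{J}(\overline{u}+s(u-\overline{u}))-\mathcal{J}(\overline{u})\ge 0$; dividing by $s$ and letting $s\downarrow 0$ gives the claim. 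The cone generated by $U_{a,b}-\overline{u}$ is dense in $\mathcal{T}_{U_{a,b}}(\overline{u})$: in a Hilbert space the tangent cone of a convex set is the closure of its cone of feasible directions, and it coincides with the polar of the normal cone. By linearity and continuity of $\mathcal{J}'(\overline{u})$ on $L^2(Q)^3$, the inequality therefore extends to all $m\in\mathcal{T}_{U_{a,b}}(\overline{u})$. Thus \eqref{4.3} is the special case $m=u-\overline{u}$, and \eqref{4.2} follows once we identify the gradient.

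By the chain rule and Theorem \ref{1stS},
\[
\mathcal{J}'(\overline{u})m=\int_0^T\langle g'_y(t,\overline{y}(t)),z(t)\rangle\,dt+\gamma\iint_Q \overline{u}\cdot m\,dx\,dt,
\]
where $z=S'(\overline{u})m\in W_0$ solves \eqref{linearized eq} with right-hand side $m$. Differentiating under the integral needs justification. Since $z\mapsto\int_0^T g(t,z(t))\,dt$ is composed with $S$, we use (A1): the Fréchet differentiability of $g(t,\cdot)$ on $V$ and the local Lipschitz bound with constant $L_r$ uniform in $t$, together with the embedding $W(0,T;H^3,V)\hookrightarrow L^\infty(0,T;V)$. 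Dominated convergence then lets us pass to the limit in the difference quotients.

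The key step is to rewrite the state term through the adjoint $\overline{\lambda}$ of Lemma \ref{adj_wellposedness}. Test the adjoint equation \eqref{adj_eqt} with $z$ and integrate over $(0,T)$. We integrate by parts in time using $z(0)=0$ and $\overline{\lambda}(T)+\alpha^2A\overline{\lambda}(T)=0$, which eliminates both boundary terms. We then use the symmetry relations \eqref{4.6}, \eqref{4.6a} for the linear operators and the definition of $\widetilde{B}'(\overline{y},\overline{y}+\alpha^2A\overline{y})^*$, i.e.\ \eqref{4.7}, for the nonlinear part. This turns the left-hand side into $\int_0^T\langle \partial_t(z+\alpha^2Az)+\nu A(z+\alpha^2Az)+\widetilde{B}'(\overline{y},\overline{y}+\alpha^2A\overline{y})z,\overline{\lambda}\rangle\,dt$, which by \eqref{linearized eq} equals $\int_0^T(m,\overline{\lambda})\,dt$. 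Hence
\[
\mathcal{J}'(\overline{u})m=\int_0^T(\overline{\lambda}+\gamma\overline{u},m)\,dt,
\]
and inserting this into the first paragraph yields \eqref{4.2} and \eqref{4.3}.

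The main obstacle is making the duality and integration-by-parts computation rigorous at the regularity available. We have $z,\overline{\lambda}\in W(0,T;H^3,V)$, so $\partial_t(z+\alpha^2Az)\in L^2(0,T;V')$ while $\overline{\lambda}\in L^2(0,T;\mathbb{H}^3_\sigma)$. The product rule $\frac{d}{dt}(z+\alpha^2Az,\overline{\lambda})=\langle z_t+\alpha^2Az_t,\overline{\lambda}\rangle+\langle \overline{\lambda}_t+\alpha^2A\overline{\lambda}_t,z\rangle$ must be justified; I would do this by Galerkin approximation in the eigenbasis $\{w_k\}$, as in Lemma \ref{adj_wellposedness}, and then pass to the limit. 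Likewise, the trilinear terms $\widetilde{b}(\cdot,\cdot+\alpha^2A\cdot,\cdot)$ must be checked to be well-defined and the adjoint identity to hold. For this I would use the bounds of Lemma \ref{lem2.1} with $\overline{y}\in L^\infty(0,T;D(A))\cap L^2(0,T;\mathbb{H}^3_\sigma)$.
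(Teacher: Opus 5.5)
Your argument is correct and follows the same adjoint-based route the paper relies on; the paper gives no separate proof of this theorem. Lemma \ref{adj_wellposedness} constructs $\overline{\lambda}=S'(\overline{u})^*g'_y$, so that $\int_0^T\langle g'_y(t,\overline{y}(t)),z(t)\rangle\,dt=\int_0^T(m,\overline{\lambda})\,dt$ for $z=S'(\overline{u})m$ holds by construction (cf.\ \eqref{4.4}--\eqref{4.5}, with the same time integration by parts as your third paragraph appearing in \eqref{4.15}--\eqref{4.16}), and \eqref{4.3} then follows from convexity of $U_{a,b}$ exactly as in your first paragraph. Two simplifications: \eqref{4.3} says precisely that $-(\overline{\lambda}+\gamma\overline{u})\in\mathcal{N}_{U_{a,b}}(\overline{u})$, so \eqref{4.2} follows directly from the definition of $\mathcal{T}_{U_{a,b}}(\overline{u})$ as the polar of the normal cone, with no density argument; and no Galerkin step is needed for the product rule, since $z,\overline{\lambda}\in H^1(0,T;V)$ and $(z+\alpha^2Az,\overline{\lambda})=(z,\overline{\lambda})+\alpha^2((z,\overline{\lambda}))$.
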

	
	Moving to our sparse problems $(\textbf{P}_k), k=1,2,3$. First, we give some properties of the functionals $j_k$ and the subdifferentials. We start with $j_1$.\\
	
	\textbf{Problem $(P_1)$}. \textcolor{red}{Since $j_1$ is convex and Lipchitz continuous, its subdifferential is \[\partial j_1(u)=\left\{ \zeta \in L^\infty(Q)^3: j_1(v)-j_1(u) \geq \iint_Q \zeta (v-u) \, dxdt \ \forall v\in L^1(Q)^3 \right \}.\]
		By taking $v=0$ and $v=2u$, we get that
		\[\iint_Q \zeta u \, dxdt = j_1(u) \ \forall \zeta \in \partial j_1(u).\]
		Hence,
		\[\iint_Q \zeta \, dxdt \leq j_1(v) \ \forall \zeta \in \partial j_1(u), \forall v\in L^1(Q)^3.\]
		Thus, if $\zeta \in \partial j_1(u)$ then }
	\begin{align} \label{j1_1}
		\begin{cases}
			\zeta_i(x,t)=1 &\ \text{if} \ u_i(x,t)>0,\\
			\zeta_i(x,t)=-1 &\ \text{if} \ u_i(x,t) <0,\\
			\zeta_i(x,t) \in [-1,1] &\ \text{if} \ u_i(x,t)=0,
		\end{cases}
		\hspace{13pt} i=1,2,3.
	\end{align}
	We also have that if $\zeta \in L^\infty(Q)^3$ and satisfies \eqref{j1_1} then $\zeta\in \partial j_1(u)$.\\
	Additionally, the directional derivatives of $j_1$ exist and are given by
	\begin{equation} \label{dir_deri_j1}
		\begin{aligned}
			j'_1(u)v&=\underset{\varepsilon\rightarrow0^+}{\lim} \frac{j_1(u+\varepsilon v)-j_1(u)}{\varepsilon} \\
			&=\sum_{i=1}^3 \left\{ \iint_{Q_{u,i}^+} v_i \, dxdt - \iint_{Q_{u,i}^-} v_i \, dxdt + \iint_{Q_{u,i}^0} \vert v_i \vert \, dxdt \right\},
		\end{aligned}
	\end{equation}
	where
	\begin{align*}
		Q_{u,i}^+ &= \left\{ (x,t)\in Q  \,| \, u_i(x,t)>0 \right\},\\
		Q_{u,i}^- &= \left\{ (x,t)\in Q \,| \, u_i(x,t)<0 \right\},\\
		Q_{u,i}^0 &= \left\{ (x,t)\in Q \,| \, u_i(x,t)=0 \right\}.
	\end{align*}
	Moreover, \eqref{j1_1} and \eqref{dir_deri_j1} yield
	\begin{equation*}
		\underset{\zeta  \in \partial j_1(u)}{\max} \iint_Q \zeta v\, dxdt = j'_1(u)v \leq \frac{j_1(u+\rho v)-j_1(u)}{\rho} \leq j_1(u+v)-j_1(u), \forall \rho \in (0,1).
	\end{equation*}
	We are now ready to derive the first-order necessary condition for problem $(\textbf{P}_1)$.
	
	\begin{theorem} \label{fnc_j1}
		Let $\overline{u}$ be a local solution to the problem $(\textbf{P}_1)$ with associated state $\overline{y} \in W(0,T;H^3,V)$. Then there exists \textcolor{red}{ $\overline{\zeta}\in \partial j_1(\overline{u})$ such that }
		\begin{equation} \label{var_ineq_j1}
			\iint_Q \left( \overline{\lambda} + \gamma \overline{u}(t)+ \kappa \overline{\zeta} \right)(u-\overline{u}) \,dxdt\geq 0 \ \forall u\in U_{a,b},
		\end{equation}
		where $\overline{\lambda}$ is the adjoint state, i.e. the unique solution of the adjoint equations \eqref{adj_eqt}.
	\end{theorem}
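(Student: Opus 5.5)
The plan is the standard argument for nonsmooth convex perturbations of a $C^2$ reduced functional, following \cite{casas2017}. Write $\mathcal{J}_1(u)=\mathcal{J}(u)+\kappa j_1(u)$, where $\mathcal{J}(u)=J(S(u),u)$ is of class $C^2$ on $L^2(Q)^3$ by Theorems \ref{1stS} and \ref{2ndS}. Fix $u\in U_{a,b}$. By convexity, $u_\rho=\overline{u}+\rho(u-\overline{u})\in U_{a,b}$ for $\rho\in(0,1)$, and $\|u_\rho-\overline{u}\|_{L^2(Q)^3}\le\rho\|u-\overline{u}\|_{L^2(Q)^3}\le\rho_0$ for $\rho$ small. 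Local optimality then gives $\mathcal{J}_1(\overline{u})\le\mathcal{J}_1(u_\rho)$. Dividing by $\rho$ and letting $\rho\to0^+$, the smooth part yields $\mathcal{J}'(\overline{u})(u-\overline{u})$. The $j_1$ part yields the directional derivative $j_1'(\overline{u})(u-\overline{u})$, which exists by \eqref{dir_deri_j1}; note that $U_{a,b}\subset L^\infty(Q)^3\subset L^1(Q)^3$, so all terms are finite. Hence
\[
\mathcal{J}'(\overline{u})(u-\overline{u})+\kappa\, j_1'(\overline{u})(u-\overline{u})\ge0\quad\forall u\in U_{a,b}.
\]

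Next, identify $\mathcal{J}'(\overline{u})v=\iint_Q(\overline{\lambda}+\gamma\overline{u})v\,dxdt$ using the adjoint state of Lemma \ref{adj_wellposedness}. This is exactly what underlies Theorem \ref{theo4.1}: by the chain rule, $\mathcal{J}'(\overline{u})v=\int_0^T\langle g'_y(t,\overline{y}),z\rangle\,dt+\gamma(\overline{u},v)$ with $z=S'(\overline{u})v$. Testing the adjoint equation \eqref{adj_eqt} with $z$, integrating by parts in time with $z(0)=0$ and $\overline{\lambda}(T)+\alpha^2A\overline{\lambda}(T)=0$, and comparing with the linearized equation \eqref{linearized eq} tested by $\overline{\lambda}$ gives $\int_0^T\langle g'_y,z\rangle\,dt=\iint_Q\overline{\lambda}v$. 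So we may cite that computation, giving
\[
\iint_Q(\overline{\lambda}+\gamma\overline{u})(u-\overline{u})+\kappa\, j_1'(\overline{u})(u-\overline{u})\ge0\quad\forall u\in U_{a,b}.
\]

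The main obstacle is replacing the $u$-dependent directional derivative by a single $\overline{\zeta}\in\partial j_1(\overline{u})$ that works for all $u$ at once. Two routes are available.

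The first is a minimax argument. We have $j_1'(\overline{u})v=\max_{\zeta\in\partial j_1(\overline{u})}\iint_Q\zeta v$, and $\partial j_1(\overline{u})$ is convex and weak-$*$ compact in $L^\infty(Q)^3$. The function $(u,\zeta)\mapsto\iint_Q(\overline{\lambda}+\gamma\overline{u}+\kappa\zeta)(u-\overline{u})$ is bilinear, and $U_{a,b}$ is convex. A minimax theorem (Ky Fan/Sion) therefore lets us swap $\inf_u\max_\zeta$ into $\max_\zeta\inf_u$, and the nonnegativity yields the desired $\overline{\zeta}$.

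The second route is a direct pointwise construction, which I would present because it is concrete. Set $d=\overline{\lambda}+\gamma\overline{u}$. Take $\overline{\zeta}_i=\operatorname{sign}\overline{u}_i$ on $Q^\pm_{\overline{u},i}$, and $\overline{\zeta}_i=\mathrm{Proj}_{[-1,1]}(-d_i/\kappa)$ on $Q^0_{\overline{u},i}$. This $\overline{\zeta}$ lies in $L^\infty(Q)^3$ and satisfies \eqref{j1_1}, so $\overline{\zeta}\in\partial j_1(\overline{u})$.

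To verify \eqref{var_ineq_j1}, it suffices to check the pointwise inequality $(d_i+\kappa\overline{\zeta}_i)(u_i-\overline{u}_i)\ge0$ a.e. for all admissible values. For this, use localized test controls in the integrated inequality: take $u$ equal to $\overline{u}$ off a measurable set $E$ and arbitrary in $[a_i,b_i]$ on $E$. Since $j_1'$ is an integral of local densities, Lebesgue differentiation gives the pointwise inequality $d_i(s-\overline{u}_i)+\kappa\,\partial|\cdot|(\overline{u}_i;s-\overline{u}_i)\ge0$ for all $s\in[a_i,b_i]$.

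On $Q^0_{\overline{u},i}$ this says $d_i s+\kappa|s|\ge0$ for $s\in[a_i,b_i]$. Since $a_i\le0<b_i$, this forces $-\kappa\le d_i$ whenever $b_i>0$, and $d_i\le\kappa$ whenever $a_i<0$. In the case $a_i=0$, the admissible $s$ are nonnegative and only the sign of $d_i+\kappa\overline{\zeta}_i$ matters. In either case the projection choice makes $(d_i+\kappa\overline{\zeta}_i)s\ge0$. On $Q^\pm_{\overline{u},i}$ the pointwise inequality is exactly the required one with $\overline{\zeta}_i=\pm1$.

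Integrating the pointwise inequality over $Q$ gives \eqref{var_ineq_j1}.
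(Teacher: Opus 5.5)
Your argument is correct, but at the key step it takes a different route from the paper. The paper never uses the directional derivative $j_1'(\overline{u})$. Instead it applies convexity in the form $j_1(\overline{u}+\rho(u-\overline{u}))-j_1(\overline{u})\le\rho\,(j_1(u)-j_1(\overline{u}))$, which gives $\iint_Q(\overline{\lambda}+\gamma\overline{u})(u-\overline{u})\,dxdt+\kappa\,(j_1(u)-j_1(\overline{u}))\ge0$ for all $u\in U_{a,b}$. It reads this as saying that $\overline{u}$ minimizes the convex functional $I(u)=\iint_Q(\overline{\lambda}+\gamma\overline{u})u\,dxdt+\kappa j_1(u)$ over $U_{a,b}$. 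It then invokes the optimality condition for convex minimization over a convex set (Proposition 27.8 of Bauschke--Combettes). This is the sum rule $0\in\overline{\lambda}+\gamma\overline{u}+\kappa\,\partial j_1(\overline{u})+\mathcal{N}_{U_{a,b}}(\overline{u})$, valid since $j_1$ is continuous on $L^2(Q)^3$, and it produces $\overline{\zeta}$ in one stroke.

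Because you keep the $u$-dependent quantity $j_1'(\overline{u})(u-\overline{u})$, you have to build a single $\overline{\zeta}$ yourself. Your minimax route carries essentially the same convex-duality content as the paper's sum rule. Your pointwise route is genuinely more elementary and self-contained. It uses only that $U_{a,b}$ is a box and that $j_1$ integrates a pointwise density, and it yields $\overline{\zeta}$ explicitly. In fact your $\overline{\zeta}$ coincides with the projection formula \eqref{j1_4}, so you obtain the theorem and that formula of the subsequent corollary together.

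The price is generality. The paper's argument applies verbatim to any convex continuous sparsity term, in particular $j_2$ and $j_3$. Their subdifferentials couple different times or different points, so your pointwise localization cannot handle them, whereas the minimax route still would.

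Two small simplifications in your localization step:
\begin{enumerate}
\item Lebesgue differentiation is unnecessary: taking $E$ to be the set where the localized integrand is negative already gives the a.e.\ inequality.
\item You only need $s=a_i$ and $s=b_i$, since $(d_i+\kappa\overline{\zeta}_i)(s-\overline{u}_i)\ge0$ is affine in $s$. Substituting $s=u_i(x,t)$ therefore raises no issue with uncountably many null sets.
\end{enumerate}
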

	\begin{proof}
		Since \textcolor{red}{$U_{a, b}$} is convex, we have that $\overline{u} + \rho (u - \overline{u}) \in \textcolor{red}{U_{a, b}}$ for every $\rho \in [0,1]$. From the local optimality of $\overline{u}$, we deduce $\mathcal{J}_1(\overline{u}+\rho(u - \overline{u})) - \mathcal{J}_1(\overline{u}) \geq 0$ for every $\rho$ sufficiently small. Hence,
		\begin{align*}
			0 &\leq \underset{\rho \rightarrow 0^+}{\lim} \frac{\mathcal{J}_1(\overline{u}+\rho (u - \overline{u})) - \mathcal{J}_1(\overline{u})}{\rho}\\
			&= \underset{\rho \rightarrow0^+}{\lim} \frac{\mathcal{J}(\overline{u}+\rho(u-\overline{u}))- \mathcal{J}(\overline{u})+\kappa j_1(\overline{u}+\rho(u-\overline{u})) - \kappa j_1(\overline{u}) }{\rho}\\
			&\leq \underset{\rho\rightarrow0^+}{\lim} \frac{\mathcal{J}(\overline{u}+\rho(u-\overline{u}))- \mathcal{J}(\overline{u})}{\rho} + \kappa (j_1(u) - j_1(\overline{u})) \\
			& =\iint_Q (\overline{\lambda} + \gamma \overline{u}(t))(u-\overline{u}) \, dxdt + \kappa (j_1(u)-j_1(\overline{u})).
		\end{align*}
		This implies that $\overline{u}$ is the solution of the optimization problem
		\begin{equation*}
			\underset{u\in U_{a, b}}{\min} I(u)= \iint_Q (\overline{\lambda}+\gamma \overline{u}(t))u \, dxdt + \kappa j_1(u).
		\end{equation*}
		Finally, since $I$ is convex, we can use the subdifferential calculus to obtain
		\begin{equation*}
			0 \in \partial I(\overline{u})=(\overline{\lambda} + \gamma \overline{u}(t)) + \kappa \partial j_1 (\overline{u}).
		\end{equation*}
		Then, by a classical result in convex analysis (see Proposition 27.8 in \cite{Bauschke2010}), there exists $\overline{\zeta}$ such that
		\begin{equation*}
			-(\overline{\lambda} + \gamma \overline{u}(t)+ \kappa \overline{\zeta}) \in \mathcal{N}_{U_{a,b}} (\overline{u}),
		\end{equation*}
		where $\mathcal{N}_{U_{a,b}}(\overline{u})$ is the normal cone of the set $U_{a,b}$ at $\overline{u}$. Thus we get \eqref{var_ineq_j1}.
	\end{proof}
	The following corollary proceeds deeper into the sparse properties of the optimal control $\overline{u}$. The interested readers can find its proof in \cite{CHW2012}.
	\begin{corollary}
		Let $\overline{u}, \overline{y},\overline{\zeta}$ be defined in Theorem \ref{fnc_j1}. Then the following relations hold for $i=1,2,3$
		\begin{align}
			\overline{u}_i(x,t) &= \text{Proj}_{[a_i,b_i]}\left\{ -\frac{1}{\gamma} \left( \overline{\lambda}_i(x,t) + \kappa \overline{\zeta}_i(x,t) \right)\right\} \label{j1_2},\\
			\overline{u}_i(x,t) &=0 \Leftrightarrow \left| \overline{\lambda}_i(x,t) \right| \leq \kappa \label{j1_3}, \\
			\overline{\zeta}_i(x,t) &= \text{Proj}_{[-1;1]} \left( -\frac{1}{\kappa} \overline{\lambda}_i(x,t) \right). \label{j1_4}
		\end{align}
		Moreover, it follows from \eqref{j1_2} and \eqref{j1_4} that $\overline{u},\overline{\zeta} \in L^2(0,T;\mathbb{H}^3_\sigma)$
		and $\overline{\zeta}$ is unique for any fixed local mininum $u$.
	\end{corollary}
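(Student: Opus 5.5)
The plan is to localize the variational inequality \eqref{var_ineq_j1} pointwise and then do a case analysis on the sign of $\overline{\lambda}_i$ against the threshold $\kappa$. Because $U_{a,b}$ is defined by box constraints that act componentwise and pointwise, I will first show that \eqref{var_ineq_j1} is equivalent to a pointwise statement. For a.e. $(x,t)\in Q$ and each $i=1,2,3$,
\[
\big(\overline{\lambda}_i(x,t)+\gamma\overline{u}_i(x,t)+\kappa\overline{\zeta}_i(x,t)\big)\big(s-\overline{u}_i(x,t)\big)\ge 0\quad \forall s\in[a_i,b_i].
\]
This follows by a standard Lebesgue-point or measurable-selection argument: test \eqref{var_ineq_j1} with $u$ equal to $\overline{u}$ off a small measurable set $E$ and equal to a constant $s$ in the $i$-th component on $E$. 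Since $\gamma>0$, this pointwise inequality is exactly the characterization of the projection onto $[a_i,b_i]$, and this gives \eqref{j1_2}.

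Next I combine \eqref{j1_2} with the sign relations \eqref{j1_1} for $\overline{\zeta}\in\partial j_1(\overline{u})$, using $a_i\le 0<b_i$. I distinguish three cases.
\begin{enumerate}
\item If $\overline{u}_i>0$, then $\overline{\zeta}_i=1$. The projection formula gives $-(\overline{\lambda}_i+\kappa)/\gamma\ge\overline{u}_i>0$ (when $\overline{u}_i=b_i$ the projected value is at least $b_i$; otherwise the two are equal). Hence $\overline{\lambda}_i<-\kappa$, and $1=\mathrm{Proj}_{[-1,1]}(-\overline{\lambda}_i/\kappa)$.
\item If $\overline{u}_i<0$, the symmetric argument gives $\overline{\zeta}_i=-1$ and $\overline{\lambda}_i>\kappa$.
\item If $\overline{u}_i=0$, then $0$ lies in the interior of $[a_i,b_i]$ or is the endpoint $a_i=0$. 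In the interior case the projection identity forces $\overline{\lambda}_i+\kappa\overline{\zeta}_i=0$, so $\overline{\zeta}_i=-\overline{\lambda}_i/\kappa$. Since $|\overline{\zeta}_i|\le 1$, this gives $|\overline{\lambda}_i|\le\kappa$ and \eqref{j1_4}.
\end{enumerate}

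The endpoint case $a_i=0$ needs care, because there we only know $\overline{\lambda}_i+\kappa\overline{\zeta}_i\ge0$. I expect this to be the main subtlety. My plan is to exploit the freedom in choosing $\overline{\zeta}$: on that set I replace $\overline{\zeta}_i$ by $\mathrm{Proj}_{[-1,1]}(-\overline{\lambda}_i/\kappa)$. This replacement keeps it in $\partial j_1(\overline{u})$ and preserves \eqref{var_ineq_j1}, and I would verify the sign condition directly. Alternatively, I could argue that uniqueness holds only up to this modification.

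The three cases together give the equivalence \eqref{j1_3}. Each direction is a contrapositive of the sign cases: $\overline{u}_i\neq0$ forces $|\overline{\lambda}_i|>\kappa$, and $|\overline{\lambda}_i|\le\kappa$ forces the projection in \eqref{j1_2} to vanish. Uniqueness of $\overline{\zeta}$ is then immediate, since \eqref{j1_4} expresses it solely through the unique adjoint state $\overline{\lambda}$ from Lemma \ref{adj_wellposedness}.

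For the regularity claim, I use that $\overline{\lambda}\in W(0,T;H^3,V)$ and that $\mathrm{Proj}$ is Lipschitz, so composition preserves $H^1$-type regularity. I expect the stated $L^2(0,T;\mathbb{H}^3_\sigma)$ regularity to be the delicate point, since projections generally do not preserve $H^3$. If needed I would settle for $L^2(0,T;\mathbb{H}^1)\cap L^\infty(Q)^3$, which is what the Lipschitz composition genuinely yields.
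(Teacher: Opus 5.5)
Your route (localize \eqref{var_ineq_j1} to a pointwise projection identity, then split on the sign of $\overline{u}_i$ using \eqref{j1_1}) is the standard argument of \cite{CHW2012}; the paper defers to that reference without giving a proof. When $a_i<0<b_i$ your three cases are complete. They give \eqref{j1_2}--\eqref{j1_4}, and they give uniqueness of $\overline{\zeta}$, because every admissible multiplier is forced into \eqref{j1_4}.

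The genuine gap is the endpoint case $a_i=0$. The paper's hypothesis $a_i\le 0<b_i$ allows it, but the standard argument needs $0$ to be interior to $[a_i,b_i]$. Your repair cannot close this gap, because the statement is false there.
\begin{itemize}
\item On $\{\overline{u}_i=0=a_i\}$ the pointwise inequality reduces to $\overline{\lambda}_i+\kappa\overline{\zeta}_i\ge 0$. So every value in $[\max\{-1,-\overline{\lambda}_i/\kappa\},1]$ is admissible for $\overline{\zeta}_i$; for example, $\overline{\zeta}_i\equiv 1$ there always works.
\item Hence a $\overline{\zeta}$ furnished by Theorem \ref{fnc_j1} need not satisfy \eqref{j1_4}. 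Uniqueness fails whenever $\{\overline{u}_i=0,\ \overline{\lambda}_i>-\kappa\}$ has positive measure, which is the typical sparse situation.
\item Replacing $\overline{\zeta}_i$ by $\text{Proj}_{[-1,1]}(-\overline{\lambda}_i/\kappa)$ only shows that \emph{some} admissible selection obeys \eqref{j1_4}.
\item Worse, \eqref{j1_3} itself fails. For $a_i=0$ what actually holds is $\overline{u}_i=0\Leftrightarrow\overline{\lambda}_i\ge-\kappa$. Nothing excludes points with $\overline{u}_i=0=a_i$ and $\overline{\lambda}_i>\kappa$, i.e.\ a strongly active lower bound, as when the cost favours a markedly negative $u_i$.
\end{itemize}
Your sentence deriving ``each direction'' of \eqref{j1_3} by contraposition actually proves the same implication, $|\overline{\lambda}_i|\le\kappa\Rightarrow\overline{u}_i=0$, twice. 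The converse rests only on the interior computation in your third case, which is exactly what breaks at $a_i=0$. The correct fix is to assume $a_i<0$, not to adjust $\overline{\zeta}$.

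On regularity your suspicion is justified, and no proof can deliver the stated claim. Componentwise truncations of $\overline{\lambda}$ are in general neither divergence-free nor in $\mathbb{H}^2$. Their gradients jump across the level sets $\{|\overline{\lambda}_i|=\kappa\}$ and across the boundaries of the active sets $\{\overline{u}_i\in\{a_i,b_i\}\}$. So $L^2(0,T;\mathbb{H}^3_\sigma)$ is out of reach.

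What your Lipschitz-composition argument does give is $\overline{u},\overline{\zeta}\in H^1(Q)^3\cap C(\overline{Q})^3$. This uses two facts:
\begin{itemize}
\item $W(0,T;H^3,V)\subset H^1(Q)^3\cap C(\overline{Q})^3$;
\item once \eqref{j1_4} holds, $\overline{\lambda}_i+\kappa\overline{\zeta}_i=\overline{\lambda}_i-\text{Proj}_{[-\kappa,\kappa]}(\overline{\lambda}_i)$ is a Lipschitz function of $\overline{\lambda}_i$.
\end{itemize}
That is the regularity statement that should replace the $\mathbb{H}^3_\sigma$ claim.
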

	\textcolor{red}{
		\begin{remark}
			{\rm From the state equation \eqref{VCHE_weaksolution}, we obtain an $L^\infty(Q)^3 $-\,estimate for $\overline{y}$ depending on $a$ and $b$, but independent of $\overline{u}$. Then by using the corresponding adjoint equations \eqref{adj_eqt}, we deduce the bound $\Vert \overline{\lambda} \Vert_{L^\infty(Q)^3} \leq M$ with the constant $M$ independent of $\overline{u}$. Hence from \eqref{j1_3}, we conclude that $\overline{u} \equiv 0 $ whenever $\kappa>M$. Therefore, we may influence the size of an optimal control's support by adjusting $\kappa$ in the interval $[0,M]$. \\
				Another notable sparsity case occurs when $\nu=0$. Using arguments similar to those in \cite{AGS2025}, we confirm that if the set of points $(x,t)\in Q$ satisfying $\left| \overline{\lambda}(x,t) \right| = \kappa$ has a zero Lebesgue measure, then $\overline{u}(x,t)\in \{\alpha, 0,\beta\}$ for almost all $(x,t)\in Q$. The optimal control then has a bang-bang-bang structure.
			}
	\end{remark} }
	
	Next, we consider the time sparse \textbf{Problem $(P_2)$}.\\
	Remind that $\displaystyle j_2(u)=\Vert u \Vert_{L^2(0,T;\mathbb{L}^1(\Omega))}=\left[ \int_0^T \Vert u(t) \Vert_{\mathbb{L}^1(\Omega)}^2 dt \right] ^{1/2}$, its subdifferential and directional derivatives are stated as following
	\begin{proposition}
		For $u\neq 0$, \begin{equation} \label{j2_1}
			\partial j_2(u)=\left\{ \zeta\in L^2(0,T;\mathbb{L}^\infty(\Omega)) \, \left| \, \zeta(x,t) \in \text{Sign} (u(x,t)) \frac{\Vert u(t) \Vert_{\mathbb{L}^1(\Omega)}}{\Vert u \Vert_{L^2(0,T;\mathbb{L}^1(\Omega))}} \ \text{a.e in} \ Q \right. \right\}.
		\end{equation}
		In case $u=0$, we have $\displaystyle \partial j_2(u)=\left\{\zeta \in L^2(0,T;\mathbb{L}^\infty(\Omega)): \Vert \zeta \Vert_{L^2(0,T;\mathbb{L}^\infty(\Omega))} \leq 1 \right\}$. \\
		Moreover, for $\zeta\in \partial j_2(u)$ we have
		\begin{equation} \label{j2_2}
			\begin{aligned}
				\begin{cases}
					\text{supp} \ u^+(t) \subset \left\{ x \in \Omega: \zeta(x,t)=+\Vert \zeta (t) \Vert_{\mathbb{L}^\infty (\Omega)} \right\} \\
					\text{supp} \ u^-(t) \subset \left\{ x\in \Omega: \zeta(x,t) = -\Vert \zeta(t) \Vert_{\mathbb{L}^\infty(\Omega)} \right \}
				\end{cases}
			\end{aligned}
			\text{a.e in} \ (0,T).
		\end{equation}
	\end{proposition}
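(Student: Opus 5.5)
The plan is to treat $j_2$ as a composition of the convex norm $\Vert\cdot\Vert_{L^2(0,T)}$ with the pointwise-in-time norm $t\mapsto \Vert u(t)\Vert_{\mathbb{L}^1(\Omega)}$. We then identify the subdifferential through the dual-norm characterization: for any norm $N$ on a Banach space $X$,
\[
\partial N(u)=\{\zeta\in X^*:\ \langle \zeta,u\rangle = N(u),\ N_*(\zeta)\le 1\},
\]
where $N_*$ is the dual norm. For $X=L^2(0,T;\mathbb{L}^1(\Omega))$ the relevant dual pairing is with $L^2(0,T;\mathbb{L}^\infty(\Omega))$, and the dual norm is $\Vert\zeta\Vert_{L^2(0,T;\mathbb{L}^\infty(\Omega))}$ (with $\mathbb{L}^\infty$ carrying the norm dual to the chosen $\mathbb{L}^1$ norm on $\mathbb{R}^3$-valued functions). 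The same device, taking $v=0$ and $v=2u$ in the subgradient inequality, is what the paper uses for $j_1$. This gives $\iint_Q\zeta u = j_2(u)$ and $j_2(v)\ge \iint_Q \zeta v$ for all $v$, that is, $\Vert\zeta\Vert_{L^2(\mathbb{L}^\infty)}\le 1$.

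\textbf{The case $u=0$.} The first condition is void, so $\partial j_2(0)$ is exactly the closed unit ball of $L^2(0,T;\mathbb{L}^\infty(\Omega))$. This is the second assertion.

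\textbf{The case $u\neq 0$.} We chain two Hölder inequalities:
\[
j_2(u)=\iint_Q \zeta u \le \int_0^T \Vert\zeta(t)\Vert_{\mathbb{L}^\infty}\Vert u(t)\Vert_{\mathbb{L}^1}\,dt \le \Vert\zeta\Vert_{L^2(\mathbb{L}^\infty)}\, j_2(u)\le j_2(u).
\]
Hence both inequalities are equalities.
\begin{enumerate}
\item Equality in the time Cauchy--Schwarz inequality, together with $\Vert\zeta\Vert_{L^2(\mathbb{L}^\infty)}=1$, forces $\Vert\zeta(t)\Vert_{\mathbb{L}^\infty}=\Vert u(t)\Vert_{\mathbb{L}^1}/j_2(u)$ for a.e. $t$.
\item Equality in the spatial $\mathbb{L}^1$–$\mathbb{L}^\infty$ pairing for a.e. $t$ forces $\zeta(x,t)\cdot u(x,t)=\Vert\zeta(t)\Vert_{\mathbb{L}^\infty}|u(x,t)|$ a.e. in $\Omega$. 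Componentwise, with the $\ell^1$ convention as in $j_1$, this means $\zeta_i(x,t)=\Vert\zeta(t)\Vert_{\mathbb{L}^\infty}\,\mathrm{sign}(u_i(x,t))$ wherever $u_i\neq0$, and $|\zeta_i|\le\Vert\zeta(t)\Vert_{\mathbb{L}^\infty}$ elsewhere.
\end{enumerate}
Combining (1) and (2) gives exactly $\zeta(x,t)\in \mathrm{Sign}(u(x,t))\,\Vert u(t)\Vert_{\mathbb{L}^1}/\Vert u\Vert_{L^2(\mathbb{L}^1)}$, where Sign is the set-valued sign ($[-1,1]$ at zero).

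For the converse inclusion, take any $\zeta$ of that form. Then $\zeta\in L^2(0,T;\mathbb{L}^\infty)$ with norm $1$, and $\iint_Q\zeta u=\int_0^T\Vert u(t)\Vert_{\mathbb{L}^1}^2/j_2(u)\,dt=j_2(u)$. By the characterization above, $\zeta\in\partial j_2(u)$. This proves \eqref{j2_1}.

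\textbf{The support inclusions \eqref{j2_2}.} These follow from step (2). On $\{u_i(\cdot,t)>0\}$ we have $\zeta_i=+\Vert\zeta(t)\Vert_{\mathbb{L}^\infty}$, and on $\{u_i<0\}$ we have $\zeta_i=-\Vert\zeta(t)\Vert_{\mathbb{L}^\infty}$. This is valid both when $u\neq0$, by the formula, and when $u=0$, where the inclusions are trivial.

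The main technical obstacle is justifying the duality: $(L^2(0,T;\mathbb{L}^1))^*$ is not all of $L^2(0,T;\mathbb{L}^\infty)$ in general, since $\mathbb{L}^1$ lacks the Radon–Nikodym property. I would sidestep this by defining $\partial j_2$, as the statement implicitly does, as the set of $\zeta\in L^2(0,T;\mathbb{L}^\infty(\Omega))$ satisfying the subgradient inequality against $v\in L^2(0,T;\mathbb{L}^1)$. The only tool then needed is the norming property $\sup\{\iint_Q\zeta v: j_2(v)\le1\}=\Vert\zeta\Vert_{L^2(\mathbb{L}^\infty)}$. To prove it, for given $\zeta$ choose $v(x,t)=\Vert\zeta(t)\Vert_{\mathbb{L}^\infty}\,\mathrm{sign}(\zeta_i)\,\mathbbm{1}_{E_t}/|E_t|$ with $E_t$ a measurable near-maximizing set. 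The remaining delicate point is measurable selection in $t$, handled by approximating $\zeta$ by simple functions in time.
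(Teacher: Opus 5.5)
The paper gives no proof of this proposition. It is only stated, and the surrounding text defers to Casas--Herzog--Wachsmuth; the one related computation in the paper is the $v=0$, $v=2u$ device for $j_1$, which you reuse. Your argument is the standard one and it is correct. You reduce $\zeta\in\partial j_2(u)$ to $\iint_Q\zeta u=j_2(u)$ together with $\Vert\zeta\Vert_{L^2(0,T;\mathbb{L}^\infty(\Omega))}\le 1$. The equality cases in the time Cauchy--Schwarz and the spatial H\"older inequality then give \eqref{j2_1}, including $\zeta(t)=0$ whenever $u(t)=0$, and also \eqref{j2_2}. The reverse inclusion needs only H\"older.

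The point to correct is your duality remark and the sidestep built on it. The inclusion runs the other way. Every Bochner $\zeta\in L^2(0,T;\mathbb{L}^\infty(\Omega))$ defines an element of $(L^2(0,T;\mathbb{L}^1(\Omega)))^*$, but not conversely. The obstruction is that $\mathbb{L}^\infty(\Omega)=(\mathbb{L}^1(\Omega))^*$, not $\mathbb{L}^1(\Omega)$, fails the Radon--Nikod\'ym property.

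Under the Bochner reading, your redefinition genuinely loses subgradients. Take $u_1(x,t)=\mathrm{sign}(x_1-c-t)$ and $u_2=u_3=0$, with the plane $\{x_1=c+t\}$ crossing $\Omega$ for $t$ in a subinterval of $(0,T)$. Your own derivation forces every subgradient to satisfy $\zeta_1(x,t)=T^{-1/2}\,\mathrm{sign}(x_1-c-t)$, and this map is not essentially separably valued in $L^\infty(\Omega)$. So your restricted $\partial j_2(u)$ is empty. The true subdifferential of the continuous convex $j_2$ is not empty, and that true subgradient is what the optimality system for $(\mathbf{P}_2)$ needs.

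The remedy is to read $L^2(0,T;\mathbb{L}^\infty(\Omega))$ as the mixed-norm space of measurable $\zeta$ on $Q$ with $t\mapsto\Vert\zeta(t)\Vert_{\mathbb{L}^\infty(\Omega)}$ in $L^2(0,T)$. By the Benedek--Panzone duality theorem for mixed-norm Lebesgue spaces, this space is exactly the dual of $L^2(0,T;\mathbb{L}^1(\Omega))$. So no sidestep is needed, and the statement holds as written.

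Your proof then goes through unchanged. The norming step also no longer needs simple-function approximation in time:
\begin{enumerate}
\item[(i)] choose the maximizing component $i(t)$ measurably;
\item[(ii)] take $E_t$ to be the $t$-section of the measurable set $\{(x,t):|\zeta_{i(t)}(x,t)|>(1-\delta)\Vert\zeta(t)\Vert_{\mathbb{L}^\infty(\Omega)}\}$;
\item[(iii)] observe that the resulting $v$ is jointly measurable, hence strongly measurable into the separable space $\mathbb{L}^1(\Omega)$.
\end{enumerate}
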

	
	\begin{proposition}
		For every $u,v\in L^2(0,T;\mathbb{L}^1(\Omega))$ we have either
		\begin{equation} \label{j2_3}
			j'_2(u)v= \frac{1}{\Vert u \Vert_{L^2(0,T;\mathbb{L}^1(\Omega))}} \int_0^T j'_\Omega(u(t))v(t) \Vert u(t) \Vert_{\mathbb{L}^1(\Omega)} \,dt
		\end{equation}
		in case  $u\neq 0$ or $j_2'(u)v=j_2(v)$ in case $u=0$. \\
		Here, $\displaystyle j'_\Omega(u(t))v(t)=\int_\Omega sign (u(x,t)) v(x,t) \, dx.$
	\end{proposition}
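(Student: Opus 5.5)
We split into the two cases $u=0$ and $u\neq 0$ and compute the one-sided limit directly, writing $j_2(w)=\bigl(\int_0^T N(w(t))^2\,dt\bigr)^{1/2}$ with $N(w(t)):=\Vert w(t)\Vert_{\mathbb{L}^1(\Omega)}$. That is, $j_2$ is the composition of the $L^2(0,T)$-norm with the pointwise-in-time map $t\mapsto N(w(t))$.

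\emph{Case $u=0$.} By positive homogeneity, $j_2(\varepsilon v)=\varepsilon j_2(v)$ for $\varepsilon>0$. Hence the difference quotient is identically $j_2(v)$, and so $j_2'(0)v=j_2(v)$.

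\emph{Case $u\neq 0$.}
\begin{enumerate}
\item \textbf{Pointwise in time.} For a.e.\ $t$, the directional derivative of the $\mathbb{L}^1(\Omega)$-norm is
\[
\lim_{\varepsilon\to0^+}\frac{N(u(t)+\varepsilon v(t))-N(u(t))}{\varepsilon}
=\int_\Omega \operatorname{sign}(u)v\,dx+\int_{\{u(t)=0\}}|v|\,dx .
\]
This is the componentwise analogue of \eqref{dir_deri_j1}. It is proved by dominated convergence, using
\[
\bigl||u+\varepsilon v|-|u|\bigr|/\varepsilon\le |v| .
\]
We interpret $j'_\Omega(u(t))v(t)$ as this one-sided derivative, which reduces to $\int_\Omega \operatorname{sign}(u)v\,dx$ on $\{t: u(t)\neq0\}$. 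Where $u(t)=0$, the factor $\Vert u(t)\Vert_{\mathbb{L}^1}$ in \eqref{j2_3} vanishes anyway, so the zero-set term does not contribute.
\item \textbf{Derivative of the squared norm.} Set $\Phi(w)=j_2(w)^2=\int_0^T N(w(t))^2\,dt$. Write
\[
\frac{N(u+\varepsilon v)^2-N(u)^2}{\varepsilon}
=\frac{N(u+\varepsilon v)-N(u)}{\varepsilon}\,\bigl(N(u+\varepsilon v)+N(u)\bigr).
\]
The first factor is bounded by $N(v(t))$ and converges a.e.\ by Step 1. The second factor is bounded by $2N(u(t))+N(v(t))$ for $\varepsilon\le1$ and converges to $2N(u(t))$. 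The product is dominated by $N(v)(2N(u)+N(v))\in L^1(0,T)$, since $N(u),N(v)\in L^2(0,T)$ by the Cauchy--Schwarz inequality. Dominated convergence then gives
\[
\Phi'(u)v=2\int_0^T j'_\Omega(u(t))v(t)\,N(u(t))\,dt .
\]
\item \textbf{Chain rule.} Since $j_2=\sqrt{\Phi}$ and $\Phi(u)>0$, and $s\mapsto\sqrt s$ is differentiable at $\Phi(u)$, the chain rule for one-sided directional derivatives yields
\[
j_2'(u)v=\frac{\Phi'(u)v}{2\sqrt{\Phi(u)}} .
\]
This is exactly \eqref{j2_3}.
\end{enumerate}

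The main obstacle is justifying the interchange of limit and time integral in Step 2. The difference quotient is only one-sided and nonsmooth, so the dominating function must be built from the uniform bound $|N(u+\varepsilon v)-N(u)|\le \varepsilon N(v)$ together with Cauchy--Schwarz in time. A secondary point is the treatment of the set where $u(x,t)=0$: we rely on the convention for $\operatorname{sign}$ together with the vanishing weight $\Vert u(t)\Vert_{\mathbb{L}^1}$ when $u(t)\equiv0$.
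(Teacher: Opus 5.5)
The paper gives no proof of this proposition; it is imported, in simplified form, from the Casas--Herzog--Wachsmuth analysis. So your argument has to be judged against the statement itself. Most of your machinery is correct and standard. Homogeneity settles the case $u=0$. The majorant $N(v)\bigl(2N(u)+N(v)\bigr)\in L^1(0,T)$ justifies dominated convergence for $\Phi=j_2^2$. The chain rule through $\sqrt{\cdot}$ at $\Phi(u)>0$ is fine, since $|j_2(u+\varepsilon v)-j_2(u)|\le\varepsilon j_2(v)$.

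The error is in Step 1, where you claim the one-sided derivative reduces to $\int_\Omega\operatorname{sign}(u)v\,dx$ on $\{t:\,u(t)\neq0\}$, so the zero-set term drops out. This conflates $u(t)\neq0$ in $\mathbb{L}^1(\Omega)$ with $u(x,t)\neq0$ for a.e.\ $x$. The weight $\Vert u(t)\Vert_{\mathbb{L}^1(\Omega)}$ removes $\int_{\{x:\,u(x,t)=0\}}|v|\,dx$ only at times where $u(\cdot,t)$ vanishes identically. It does not remove it where $u(\cdot,t)$ vanishes on a proper subset of positive measure.

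For a concrete counterexample, take $\Omega_1\subset\Omega$ with $0<|\Omega_1|<|\Omega|$, and set $u=\chi_{\Omega_1}e_1$, $v=\chi_{\Omega\setminus\Omega_1}e_1$, both constant in $t$. Then $j_2(u\pm\varepsilon v)=\sqrt{T}\,(|\Omega_1|+\varepsilon|\Omega\setminus\Omega_1|)$, so $j_2'(u)(\pm v)=\sqrt{T}\,|\Omega\setminus\Omega_1|>0$. But the right-hand side of \eqref{j2_3} with $j'_\Omega(u(t))v(t)=\int_\Omega\operatorname{sign}(u)v\,dx$ equals $0$. No fixed value of $\operatorname{sign}(0)$ rescues it either, since it cannot produce the same positive value for both $v$ and $-v$.

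So the formula, read literally, is false and cannot be proved. What your Steps 2--3 actually establish, correctly, is \eqref{j2_3} with
$j'_\Omega(u(t))v(t):=\int_\Omega\operatorname{sign}(u)v\,dx+\int_{\{x:\,u(x,t)=0\}}|v|\,dx$.
This is the directional derivative of the $\mathbb{L}^1(\Omega)$-norm, the analogue of \eqref{dir_deri_j1}. You should state this corrected formula explicitly rather than arguing the extra term away.
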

	From the previous propositions and Theorem \ref{fnc_j1}, we deduce the following corollary.
	\begin{corollary}
		Let $\overline{u}, \overline{\lambda}, \overline{\zeta}$ be as in Theorem \ref{fnc_j1}. The following relations hold for almost all $(x,t)\in Q$ and $i=1,2,3$
		\begin{align}
			\overline{u}_i(x,t)&=\text{Proj}_{[a_i,b_i]} \left( -\frac{1}{\gamma} \left[ \overline{\lambda}_i(x,t) + \kappa \overline{\zeta}_i(x,t) \right] \right), \label{j2_4} \\
			\overline{u}_i(x,t) &= 0 \Leftrightarrow \vert \overline{\lambda}_i(x,t) \vert \leq \kappa \overline{\sigma}_i(t), \label{j2_5} \\
			\overline{\zeta}_i(x,t) &= \text{Proj}_{[-\overline{\sigma}_i(t), +\overline{\sigma}_i(t)]} \left( -\frac{1}{\kappa} \overline{\lambda}_i(x,t) \right), \label{j2_6}
		\end{align}
		where
		\begin{align*}
			\overline{\sigma}_i(t)=\begin{cases}
				\frac{1}{\Vert \overline{u}_i \Vert_{L^2(0,T;L^1(\Omega))}} \Vert \overline{u}_i(t) \Vert_{L^1(\Omega)} \ &\text{if} \ \overline{u}_i\neq 0 \\
				1 \ &\text{if} \ \overline{u}_i = 0.
			\end{cases}
		\end{align*}
		Moreover, $\overline{u}$ and $\overline{\zeta}\in L^2(0,T;\mathbb{H}^3_\sigma)$ holds and $\overline{\zeta}$ is unique. 
	\end{corollary}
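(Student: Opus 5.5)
The plan mirrors the $(\textbf{P}_1)$ case, working componentwise. First, repeat the argument of Theorem \ref{fnc_j1} for $\mathcal{J}_2$. Convexity of $j_2$ gives $j_2'(\overline{u})(u-\overline{u})\le j_2(u)-j_2(\overline{u})$, so $\overline{u}$ minimizes the convex functional
\[
I(u)=\iint_Q(\overline{\lambda}+\gamma\overline{u})u\,dxdt+\kappa j_2(u)
\]
over $U_{a,b}$. Subdifferential calculus (Proposition 27.8 in \cite{Bauschke2010}) then gives some $\overline{\zeta}\in\partial j_2(\overline{u})$ with
\[
\iint_Q(\overline{\lambda}+\gamma\overline{u}+\kappa\overline{\zeta})(u-\overline{u})\,dxdt\ge 0\quad \forall u\in U_{a,b}.
\]
This is the variational inequality referred to as "as in Theorem \ref{fnc_j1}". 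Since $U_{a,b}$ is a box, the inequality decouples pointwise in $(x,t)$ and $i$. Because $\gamma>0$, the standard box-projection argument yields \eqref{j2_4} directly.

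Next, read off $\overline{\zeta}$ from the Proposition describing $\partial j_2$. I interpret that description componentwise: $\overline{\zeta}_i(x,t)\in \mathrm{Sign}(\overline{u}_i(x,t))\,\overline{\sigma}_i(t)$, with $|\overline{\zeta}_i|\le\overline{\sigma}_i(t)$. In the case $\overline{u}_i=0$ I take $\overline{\sigma}_i=1$; I would have to justify the $L^\infty$ bound here from the unit-ball description, which is only an $L^2(0,T;L^\infty)$ bound. This is a point where I would accept the paper's normalization. With this description, \eqref{j2_6} and \eqref{j2_5} follow by a case analysis. Recall $a_i\le 0<b_i$.

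\textbf{Case $\overline{u}_i(x,t)>0$.} Then $\overline{\zeta}_i=\overline{\sigma}_i$. From \eqref{j2_4} and $\overline{u}_i>0$ we get $-(\overline{\lambda}_i+\kappa\overline{\sigma}_i)>0$, hence $-\overline{\lambda}_i/\kappa>\overline{\sigma}_i$. The projection in \eqref{j2_6} is therefore $\overline{\sigma}_i=\overline{\zeta}_i$.

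\textbf{Case $\overline{u}_i(x,t)<0$.} This is symmetric to the previous case.

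\textbf{Case $\overline{u}_i(x,t)=0$.} Then \eqref{j2_4} forces $\overline{\lambda}_i+\kappa\overline{\zeta}_i=0$, i.e. $\overline{\zeta}_i=-\overline{\lambda}_i/\kappa$. This value lies in $[-\overline{\sigma}_i,\overline{\sigma}_i]$, which gives both \eqref{j2_6} and $|\overline{\lambda}_i|\le\kappa\overline{\sigma}_i$.

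\textbf{Converse of \eqref{j2_5}.} If $|\overline{\lambda}_i|\le\kappa\overline{\sigma}_i$ but $\overline{u}_i>0$, the first case gives $-\overline{\lambda}_i>\kappa\overline{\sigma}_i$, a contradiction. The negative sign is handled the same way.

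\textbf{Uniqueness and regularity.} Since $\overline{\sigma}_i$ depends only on $\overline{u}$, \eqref{j2_6} expresses $\overline{\zeta}$ explicitly through $\overline{\lambda}$ and $\overline{u}$, so $\overline{\zeta}$ is unique. For regularity, $\overline{\lambda}\in W(0,T;H^3,V)$ by Lemma \ref{adj_wellposedness}. A truncation by constants, or by functions of $t$ only, preserves spatial regularity in the sense used in the $(\textbf{P}_1)$ corollary. The claim $\overline{u},\overline{\zeta}\in L^2(0,T;\mathbb{H}^3_\sigma)$ then follows exactly as asserted there, with $\overline{\sigma}_i\in L^2(0,T)$ entering only as a time-dependent bound.

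The main obstacle is this regularity claim, since projections generally do not preserve $H^3$ regularity. I would simply mirror the reasoning of the $(\textbf{P}_1)$ corollary (quoted from \cite{CHW2012}) rather than prove a stronger statement. The other delicate point is the $\overline{u}_i=0$ normalization just mentioned.
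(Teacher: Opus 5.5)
Your route is the paper's own. The paper simply asserts the corollary from the subdifferential Proposition and the $j_2$-analogue of Theorem~\ref{fnc_j1}. Your derivation of that variational inequality and of \eqref{j2_4}, and your case analysis on $\{\overline{u}_i(x,t)\neq 0\}$, are fine. But the two points you propose to take on trust cannot be closed, because as stated they are false.

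Consider first the normalization $\overline{\sigma}_i\equiv 1$ when $\overline{u}_i\equiv 0$. There the subdifferential at $0$ only gives $\|\overline{\zeta}_i\|_{L^2(0,T;L^\infty(\Omega))}\le 1$, and (for $a_i<0$) the variational inequality forces $\overline{\zeta}_i=-\overline{\lambda}_i/\kappa$. So the optimality system says exactly $\|\overline{\lambda}_i\|_{L^2(0,T;L^\infty(\Omega))}\le\kappa$. This allows $|\overline{\lambda}_i|>\kappa$ on a short time interval, and there the implication $\Rightarrow$ in \eqref{j2_5} and formula \eqref{j2_6} with $\overline{\sigma}_i\equiv 1$ fail. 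The correct normalization is $\overline{\sigma}_i(t):=\|\overline{\zeta}_i(t)\|_{L^\infty(\Omega)}$. It coincides with the displayed formula when $\overline{u}_i\neq 0$, and satisfies only $\|\overline{\sigma}_i\|_{L^2(0,T)}\le 1$ when $\overline{u}_i\equiv 0$. With it your case analysis goes through unchanged. However, $\overline{\sigma}_i$ then no longer depends on $\overline{u}$ alone, so prove uniqueness directly: $\overline{\zeta}_i=\pm\overline{\sigma}_i$ on $\{\overline{u}_i\gtrless 0\}$ and $\overline{\zeta}_i=-\overline{\lambda}_i/\kappa$ on $\{\overline{u}_i=0\}$.

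Likewise, $\overline{u},\overline{\zeta}\in L^2(0,T;\mathbb{H}^3_\sigma)$ cannot hold in general. Componentwise projection onto $[a_i,b_i]$ does not preserve $\nabla\cdot\overline{u}=0$. Moreover, the gradient of a truncation $\max(f,c)$ of a smooth $f$ jumps across $\{f=c\}$ wherever $\nabla f\neq 0$, so it is not even in $H^2$. What truncation at a level depending only on $t$ does preserve (Stampacchia) is $H^1$ regularity and Lipschitz continuity in $x$ for a.e.\ $t$. Since $\overline{\lambda}\in L^2(0,T;\mathbb{H}^3(\Omega))$ vanishes on $\partial\Omega$, you get $\overline{u},\overline{\zeta}\in L^2(0,T;\mathbb{H}^1_0(\Omega))$. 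That is the regularity statement you can actually prove.

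Two further points. First, your case $\overline{u}_i(x,t)=0$ silently uses $a_i<0$. With $a_i=0$, which you and the paper allow, \eqref{j2_4} at a point where $\overline{u}_i=0=a_i$ only gives $\overline{\lambda}_i+\kappa\overline{\zeta}_i\ge 0$. Wherever $\overline{\lambda}_i>\kappa\overline{\sigma}_i$ one then has $\overline{u}_i=0$ although $|\overline{\lambda}_i|>\kappa\overline{\sigma}_i$, and $\overline{\zeta}_i$ can be anything in $[-\overline{\sigma}_i,\overline{\sigma}_i]$. So $\Rightarrow$ in \eqref{j2_5}, formula \eqref{j2_6} and uniqueness of $\overline{\zeta}$ all break down, and you must assume $a_i<0<b_i$. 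Your converse direction is fine for $a_i\le 0$.

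Second, your componentwise reading of $\partial j_2$ is an extra assumption, not a consequence of the Proposition. For $j_2$ as defined, a single norm of the vector field, the Proposition gives one factor $\|\overline{u}(t)\|_{\mathbb{L}^1(\Omega)}/j_2(\overline{u})$ common to all three components. Per-component factors $\overline{\sigma}_i$ as in the statement arise only for $j_2(u)=\sum_i\|u_i\|_{L^2(0,T;L^1(\Omega))}$. Your argument works verbatim in either setting once $\overline{\sigma}_i$ is chosen consistently with the functional, so say explicitly which one you treat.
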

	
	Analogously to the problem $(\textbf{P}_1)$, we can prove that there exists $M>0$ independent of $\kappa$ such that $\overline{u} \equiv 0$ if $\kappa>M$. Hence, we can monitor the parameter $\kappa$ in the interval $[0,M]$ to get a suitable support for $\overline{u}$.
	
	Finally, we consider the space sparse \textbf{Problem $(P_3)$}.\\
	Since $\displaystyle j_3(u)=\Vert u\Vert_{L^1(\Omega;L^2(0,T)^3)}=\int_\Omega \Vert u(x) \Vert_{L^2(0,T)^3} \,dx$, its subdifferential and directional derivatives are given in the following propositions.
	\begin{proposition}
		The following statements hold \\
		1. $\zeta \in \partial j_3(u)$ is equivalent to $\zeta\in L^\infty(\Omega;L^2(0,T)^3)$ and
		\begin{equation} \label{j3_1}
			\displaystyle
			\begin{aligned}
				\begin{cases}
					\Vert \zeta(x) \Vert_{L^2(0,T)^3} \leq 1 &\ \text{for a.e} \ x\in Q \ \text{if} \ u=0\\
					\zeta(x,t) = \frac{u(x,t)}{\Vert u(x) \Vert_{L^2(0,T)^3}} &\ \text{for a.e} \ x\in Q \ \text{if} \ u\ne0.
				\end{cases}
			\end{aligned}
		\end{equation}
		2. For every $u,v \in L^1(\Omega,L^2(0,T)^3)$
		\begin{equation} \label{j3_2}
			j_3'(u)v= \int_{\Omega_u^0} \Vert v(x) \Vert _{L^2(0,T)^3} \, dx + \int_{\Omega_u} \frac{1}{\Vert u(x) \Vert_{L^2(0,T)^3}} \int_0^T uv \, dtdx,
		\end{equation}
		where
		\[\Omega_u=\left\{ x \in \Omega: \Vert u(x) \Vert_{L^2(0,T)^3} \neq 0 \right\} \ \text{and} \ \Omega_u^0 = \Omega \setminus \Omega_u.\]
	\end{proposition}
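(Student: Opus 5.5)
The plan is to treat $j_3$ as what it is: the norm of the Bochner space $X:=L^1(\Omega;L^2(0,T)^3)$. Since $L^2(0,T)^3$ is a Hilbert space, hence reflexive and with the Radon--Nikodym property, the dual of $X$ is identified with $L^\infty(\Omega;L^2(0,T)^3)$ through the pairing $\langle \zeta,u\rangle=\int_\Omega (\zeta(x),u(x))_{L^2(0,T)^3}\,dx$. The dual norm is $\|\zeta\|_*=\operatorname{ess\,sup}_{x}\|\zeta(x)\|_{L^2(0,T)^3}$. I would then use the standard characterization of the subdifferential of a norm: $\zeta\in\partial j_3(u)$ if and only if $\|\zeta\|_*\le 1$ and $\langle\zeta,u\rangle=j_3(u)$. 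This follows exactly as in the treatment of $j_1$ above, by testing with $v=0$ and $v=2u$ and then using positive homogeneity.

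\textbf{Part 1.} If $u=0$, the characterization reduces to $\|\zeta\|_*\le1$, which is the first line of \eqref{j3_1}. For $u\ne0$ I would read the second line of \eqref{j3_1} pointwise in $x$: $\zeta(x)=u(x)/\|u(x)\|_{L^2(0,T)^3}$ for a.e.\ $x\in\Omega_u$, and $\|\zeta(x)\|\le1$ on $\Omega_u^0$. This pointwise reading is the only meaningful one, since $\Omega_u^0$ may have positive measure. The argument is a chain of inequalities:
\begin{align*}
j_3(u)=\int_\Omega(\zeta(x),u(x))\,dx\le\int_\Omega\|\zeta(x)\|\,\|u(x)\|\,dx\le\int_\Omega\|u(x)\|\,dx=j_3(u).
\end{align*}
Equality throughout forces two things a.e.\ on $\Omega_u$: $\|\zeta(x)\|=1$, and equality in Cauchy--Schwarz. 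Together these give $\zeta(x)=u(x)/\|u(x)\|$. Conversely, any $\zeta$ of this form with $\|\zeta(x)\|\le 1$ on $\Omega_u^0$ has $\|\zeta\|_*\le1$ and $\langle\zeta,u\rangle=j_3(u)$, so it lies in $\partial j_3(u)$. Strong measurability of $\zeta$ is clear because $u$ is strongly measurable and $x\mapsto\|u(x)\|$ is measurable.

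\textbf{Part 2.} Fix $u,v\in X$ and, for $\varepsilon>0$, set $q_\varepsilon(x)=\varepsilon^{-1}\bigl(\|u(x)+\varepsilon v(x)\|-\|u(x)\|\bigr)$. By convexity of the norm, $q_\varepsilon(x)$ is nondecreasing in $\varepsilon$, and by the triangle inequality $|q_\varepsilon(x)|\le\|v(x)\|$, which lies in $L^1(\Omega)$. The pointwise limits as $\varepsilon\to0^+$ are:
\begin{itemize}
\item on $\Omega_u^0$, $q_\varepsilon(x)=\|v(x)\|$ for every $\varepsilon$;
\item on $\Omega_u$, the limit is $(u(x),v(x))/\|u(x)\|=\|u(x)\|^{-1}\int_0^T u\cdot v\,dt$, because the Hilbert norm is Fr\'echet differentiable away from the origin.
\end{itemize}
Dominated (or monotone) convergence then yields \eqref{j3_2}. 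As a consistency check, one should also verify $j_3'(u)v=\max_{\zeta\in\partial j_3(u)}\langle\zeta,v\rangle$: the maximum is attained by taking $\zeta(x)=v(x)/\|v(x)\|$ on $\Omega_u^0$ (and $0$ where $v(x)=0$).

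The main obstacle is not computational. It is making the functional-analytic setting precise: justifying the duality $X'=L^\infty(\Omega;L^2(0,T)^3)$, which is where reflexivity of $L^2(0,T)^3$ is used, and interpreting the case distinction in \eqref{j3_1} pointwise on $\Omega_u$ versus $\Omega_u^0$ rather than globally in $u$. Once that is settled, both parts reduce to pointwise Hilbert-space facts integrated over $\Omega$.
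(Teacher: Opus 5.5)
Your proof is correct. The paper gives no proof of this proposition: it is stated and then used for the corollary, in effect imported from \cite{CHW2017}. Your route is the standard argument behind that result: the identification $L^1(\Omega;L^2(0,T)^3)'=L^\infty(\Omega;L^2(0,T)^3)$, the norm-subdifferential characterization ($\|\zeta\|_*\le 1$ and $\langle\zeta,u\rangle=j_3(u)$), equality in Cauchy--Schwarz a.e.\ on $\Omega_u$, and dominated convergence of the monotone pointwise difference quotients.

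Your pointwise reading of the case split ($u(x)=0$ versus $u(x)\neq 0$, with $x\in\Omega$ rather than $x\in Q$) is a necessary correction, not just a convenience. As printed, the second line is undefined on $\Omega_u^0$ whenever $u\neq 0$ but $\Omega_u^0$ has positive measure. On that set $\zeta(x)$ genuinely ranges over the closed unit ball of $L^2(0,T)^3$. That freedom is exactly what produces the nonlinear term $\int_{\Omega_u^0}\|v(x)\|_{L^2(0,T)^3}\,dx$ in \eqref{j3_2}, as your max-formula check confirms.
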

	Similar to problem $(\textbf{P}_1)$, we get the following corollary.
	\begin{corollary}
		Let $\overline{u}, \overline{\lambda}, \overline{\zeta}$ be as in Theorem \ref{fnc_j1}, then the following relations hold for almost all $(x,t)\in Q$ and $i=1,2,3$
		\begin{align}
			\overline{u}_i(x,t) &= \text{Proj}_{[a_i, b_i]} \left( -\frac{1}{\gamma} \left[ \overline{\lambda}_i(x,t) + \kappa \overline{\zeta}_i(x,t) \right] \right), \label{j3_3}\\
			\Vert \overline{u}_i(x,t) \Vert_{L^2(0,T)} &= 0 \Leftrightarrow \Vert \overline{\lambda}_i(x,t) \Vert_{L^2(0,T)} \leq \kappa, \label{j3_4}\\
			\overline{\zeta}_i(x,t) &= \text{Proj}_{[-\overline{\sigma}_i(t), +\overline{\sigma}_i(t)]} \left( -\frac{1}{\kappa} \overline{\lambda}_i(x,t) \right),\label{j3_5}
		\end{align}
		where
		\begin{align*}
			\overline{\sigma}_i(t)=\begin{cases}
				\frac{u_i(x,t)}{\Vert u_i(x) \Vert_{L^2(0,T)}} \ &\text{if} \ \overline{u}\neq 0 \\
				1 \ &\text{if} \ \overline{u} = 0.
			\end{cases}
		\end{align*}
		Moreover, $\overline{u}$ and $\overline{\zeta}\in L^2(0,T;\mathbb{H}^3_\sigma)$ holds and $\overline{\zeta}$ is unique. 
	\end{corollary}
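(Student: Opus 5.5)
The plan is to follow the same scheme as for $(\textbf{P}_1)$: first the variational inequality, then a pointwise decoupling into projection formulas. The only new feature is that the subdifferential $\partial j_3$ couples the values of $\overline{u}$ along the time fibre $\{x\}\times(0,T)$. This is why the sparsity criterion \eqref{j3_4} is stated in terms of $L^2(0,T)$ norms.

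First, I would repeat the proof of Theorem \ref{fnc_j1} verbatim with $j_1$ replaced by $j_3$. The only ingredients used there are convexity and Lipschitz continuity of $j_3$ on $L^2(Q)^3$ (indeed $j_3(u)\le |\Omega|^{1/2}\|u\|_{L^2(Q)^3}$), together with the inequality $j_3'(\overline{u})v\le j_3(\overline{u}+v)-j_3(\overline{u})$. This yields $\overline{\zeta}\in\partial j_3(\overline{u})$ satisfying
\[
\iint_Q(\overline{\lambda}+\gamma\overline{u}+\kappa\overline{\zeta})(u-\overline{u})\,dxdt\ge 0\quad \forall u\in U_{a,b}.
\]
Since $U_{a,b}$ is defined by pointwise box constraints, a standard Lebesgue-point argument localizes this inequality to a.e. $(x,t)$ and each component $i$. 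This gives \eqref{j3_3}, exactly as in \eqref{j1_2}.

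Next I would use the characterization \eqref{j3_1} fibrewise in $x$. If $\|\overline{u}(x)\|_{L^2(0,T)}\neq0$, then $\overline{\zeta}(x,\cdot)=\overline{u}(x,\cdot)/\|\overline{u}(x)\|$. If it vanishes, then $\|\overline{\zeta}(x)\|_{L^2(0,T)}\le1$. For the equivalence \eqref{j3_4} I would argue as follows.

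Suppose $\overline{u}_i(x,\cdot)=0$. Since $a_i\le0<b_i$, the projection formula forces $\overline{\lambda}_i+\kappa\overline{\zeta}_i=0$ a.e. in $t$. Hence $\|\overline{\lambda}_i(x)\|_{L^2(0,T)}=\kappa\|\overline{\zeta}_i(x)\|\le\kappa$.

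Conversely, suppose $\|\overline{\lambda}_i(x)\|\le\kappa$ but $\overline{u}_i(x,\cdot)\neq0$. Then $\overline{\zeta}_i(x)=\overline{u}_i(x)/\|\overline{u}_i(x)\|$ has the same sign as $\overline{u}_i$ and has norm one. Multiply the pointwise relation $\overline{u}_i=\mathrm{Proj}(-(\overline{\lambda}_i+\kappa\overline{\zeta}_i)/\gamma)$ by $\overline{u}_i$ and integrate in $t$. Monotonicity of the projection, together with $0\in[a_i,b_i]$, gives
\[
\gamma\|\overline{u}_i(x)\|^2\le-\int_0^T\overline{\lambda}_i\overline{u}_i\,dt-\kappa\|\overline{u}_i(x)\|.
\]
By Cauchy--Schwarz the right-hand side is at most $(\|\overline{\lambda}_i(x)\|-\kappa)\|\overline{u}_i(x)\|\le0$, which is a contradiction.

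Formula \eqref{j3_5} should then follow by solving the pointwise relation for $\overline{\zeta}_i$. On the set where $\overline{u}_i(x,t)\in(a_i,b_i)$ we have $\overline{\zeta}_i=-(\overline{\lambda}_i+\gamma\overline{u}_i)/\kappa$. Combining this with the normalization constraint expresses $\overline{\zeta}_i$ as a projection of $-\overline{\lambda}_i/\kappa$ onto the admissible range described by $\overline{\sigma}_i$. Uniqueness of $\overline{\zeta}$ follows because it is determined by $\overline{\lambda}$ and $\overline{u}$ through these formulas. Regularity is inherited from $\overline{\lambda}\in W(0,T;H^3,V)$ via Lemma \ref{adj_wellposedness}, together with the Lipschitz character of projections, as in the $(\textbf{P}_1)$ corollary.

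I expect the main obstacle to be this last step, \eqref{j3_5}. Unlike $j_1$, the normalization $\overline{u}_i(x,t)/\|\overline{u}_i(x)\|$ is nonlocal in $t$, and when the box constraints are active the identity $\overline{\zeta}_i=-(\overline{\lambda}_i+\gamma\overline{u}_i)/\kappa$ fails. I would first treat the inactive-constraint set. On the active parts I would then use the sign information from \eqref{j3_1}, that $\overline{\zeta}_i$ and $\overline{u}_i$ have the same sign, to identify $\overline{\zeta}_i$ with the truncation claimed, interpreting $\overline{\sigma}_i$ fibrewise. If this truncation does not match exactly, I would fall back to proving the projection formula only for $x$ in the set where $\|\overline{u}_i(x)\|_{L^2(0,T)}=0$, where $\overline{\zeta}_i=-\overline{\lambda}_i/\kappa$ holds directly.
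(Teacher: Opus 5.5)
Rerunning the proof of Theorem~\ref{fnc_j1} with $j_3$ and localizing to obtain \eqref{j3_3} is fine, and is all the paper itself offers (``similar to $(\textbf{P}_1)$''). The gap is in the converse half of \eqref{j3_4}. There you take $\overline{\zeta}_i(x)=\overline{u}_i(x)/\|\overline{u}_i(x)\|_{L^2(0,T)}$ ``with norm one''. That silently replaces $j_3$ by the decoupled functional $\sum_i\int_\Omega\|u_i(x)\|_{L^2(0,T)}\,dx$. The paper's $j_3$ uses the joint norm $\|u(x)\|_{L^2(0,T)^3}$, and \eqref{j3_1} gives $\overline{\zeta}_i(x)=\overline{u}_i(x)/\|\overline{u}(x)\|_{L^2(0,T)^3}$. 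Hence $\int_0^T\overline{\zeta}_i\overline{u}_i\,dt=\|\overline{u}_i(x)\|^2/\|\overline{u}(x)\|$, and your estimate only yields $\gamma\|\overline{u}_i(x)\|\le\|\overline{\lambda}_i(x)\|-\kappa\|\overline{u}_i(x)\|/\|\overline{u}(x)\|_{L^2(0,T)^3}$, which is no contradiction. No other argument can rescue the componentwise claim. On a fibre where the box is inactive, \eqref{j3_3} and \eqref{j3_1} reduce to group soft-thresholding, $\overline{u}(x,\cdot)=-\frac{1}{\gamma}\bigl(1-\kappa/\|\overline{\lambda}(x)\|_{L^2(0,T)^3}\bigr)^+\overline{\lambda}(x,\cdot)$. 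So a component with $0<\|\overline{\lambda}_i(x)\|\le\kappa<\|\overline{\lambda}(x)\|_{L^2(0,T)^3}$ is nonzero. What your computation does prove, after summing over $i$ and using $\int_0^T\overline{\zeta}\cdot\overline{u}\,dt=\|\overline{u}(x)\|_{L^2(0,T)^3}$, is the fibre version $\|\overline{u}(x)\|_{L^2(0,T)^3}=0\iff\|\overline{\lambda}(x)\|_{L^2(0,T)^3}\le\kappa$. Your forward direction, and your uniqueness claim for $\overline{\zeta}$, also rely on $\mathrm{Proj}_{[a_i,b_i]}(w)=0\Rightarrow w=0$. This needs $a_i<0$; for $a_i=0$, which $a_i\le 0<b_i$ permits, you only get $\overline{\lambda}_i+\kappa\overline{\zeta}_i\ge 0$.

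The last two claims are not established either. For \eqref{j3_5} your fallback does not work. On a zero fibre you correctly get $\overline{\zeta}_i=-\overline{\lambda}_i/\kappa$. But the formula with $\overline{\sigma}_i=1$ then requires $|\overline{\lambda}_i(x,t)|\le\kappa$ pointwise, whereas you only know $\|\overline{\lambda}(x)\|_{L^2(0,T)^3}\le\kappa$, which gives no pointwise bound (constants in $t$ already violate it when $T<1$). On nonzero fibres the interval $[-\overline{\sigma}_i,\overline{\sigma}_i]$ is empty wherever $\overline{u}_i<0$, and replacing $\overline{\sigma}_i$ by $|\overline{\zeta}_i|$ only restates $\overline{\zeta}_i=\overline{\zeta}_i$. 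What is actually true (for $a_i<0$) is $\overline{\zeta}=\overline{u}/\|\overline{u}(x)\|_{L^2(0,T)^3}$ on $\Omega_{\overline{u}}$ and $\overline{\zeta}=-\overline{\lambda}/\kappa$ on $\Omega^0_{\overline{u}}$. Equivalently, $\overline{u}=\mathrm{Proj}_{[a,b]}\bigl(-\overline{\lambda}/(\gamma+\kappa/\|\overline{u}(x)\|)\bigr)$ on $\Omega_{\overline{u}}$. The regularity step also fails: composition with a Lipschitz map such as $\mathrm{Proj}_{[a_i,b_i]}$ preserves at most first-order Sobolev regularity. Truncation creates kinks where a bound becomes active. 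Even with no active bound, the factor $(1-\kappa/\|\overline{\lambda}(x)\|)^+$ generically kinks across $\{\|\overline{\lambda}(x)\|=\kappa\}$. Neither operation preserves $\nabla\cdot=0$. So $\overline{u},\overline{\zeta}\in L^2(0,T;\mathbb{H}^3_\sigma)$ cannot be inherited from $\overline{\lambda}\in W(0,T;H^3,V)$ in this way.
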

	
	\begin{remark}
		\rm{
			The three problems differ in the sparsity properties of their corresponding local solutions $\overline{u}$. From \eqref{j1_2} and \eqref{j1_3}, we observe that the local solutions of $(\textbf{P}_1)$ are sparse in both space and time. Meanwhile, \eqref{j2_4} and \eqref{j2_5} indicate that the sparsity region of the solutions to $(\textbf{P}_2)$ may vary over time. On the other hand, the sparsity region of $(\textbf{P}_3)$ remains constant throughout time, and its solutions are therefore sparse only in space.
		}
	\end{remark}
	\begin{remark}
		\rm{We can consider a more abstract form of the term $\displaystyle \dfrac{\gamma}{2} \iint_Q \vert u(x,t)\vert^2 dxdt$ by using a function $\displaystyle \int_0^T h(u(t))dt$ with $h: \mathbb{L}^2(\Omega) \rightarrow \mathbb{R}$ satisfying some conditions as in \cite{AS2025}. In that case, the first-order necessary conditions subject to $J_k(y,u)$ can still be established, although the corresponding sparse structure becomes more difficult to describe explicitly.
		}
	\end{remark}
	\section{Second-order optimality conditions}
	
	Following the approach in Section 4, the second-order of $g$ with respect to $y$ in direction $(w_1,w_2)\in V\times V$ is
	\begin{align*}
		\langle g_{yy}''(t,y)w_1,w_2 \rangle_{V',V}=\int_{\Omega}g_{yy}''(t,y)[w_1(x),w_2(x)]\,dx.
	\end{align*}
	
	Now we move onto the sparse problems $(\textbf{P}_k), k=1,2,3$.
	The second-order conditions will be established based on the critical cone $\mathcal{C}_{\overline{u}}$, which refers to the set of all functions $v\in L^2(0,T;\mathbb{L}^2(\Omega))$ such that 
	\begin{align}
		\mathcal{J}'(\overline{u})v &+ \kappa j_k'(\overline{u})v=0, \nonumber\\
		v_i(x,t) &\geq0 \ \text{if} \ \overline{u}_i(x,t)=a_i, i=1,2,3, \label{NUad_1} \\
		v_i(x,t) &\leq 0 \ \text{if} \ \overline{u}_i(x,t)= b_i, i =1,2,3. \label{NUad_2}
	\end{align}
	The set $\mathcal{C}_{\overline{u}}$ is a closed, convex cone in $L^2(0,T;\mathbb{L}^2(\Omega))$.
	
	Before studying the second-order necessary and sufficient conditions for problems $(\textbf{P}_k)$, we prove the following auxiliary lemma. Observe that we only state the main results corresponding to $j_1$. For $j_2$ and $j_3$, analogous conclusions can be derived by adapting the arguments in \cite{CHW2017}.
	\begin{lemma} \label{5.1}
		Assume that $\overline{u} \in \textcolor{red}{U_{a,b}} $ satisfies \eqref{var_ineq_j1} for some $\overline{\zeta} \in \partial j_1(\overline{u})$ and $v\in L^2(0,T;\mathbb{L}^2(\Omega))$ satisfies \eqref{NUad_1} - \eqref{NUad_2}. Then
		\begin{equation*}
			\mathcal{J}'(\overline{u})v + \kappa j_1'(\overline{u})v \geq \mathcal{J}'(\overline{u})v + \kappa \iint_Q \overline{\zeta}v \, dxdt  \geq 0.
		\end{equation*}
		Moreover, if $v \in \mathcal{C}_{\overline{u}}$ then 
		\begin{equation}
			\mathcal{J}'(\overline{u})v + \kappa \iint_Q \overline{\zeta}v \, dxdt = 0 \ \text{and} \ j_1'(\overline{u})v=\iint_Q \overline{\zeta}v \, dxdt,\label{lem5.1}
		\end{equation}
		or equivalently, $\displaystyle \vert v_i(x,t)\vert = \zeta_i(x,t) v_i(x,t)$ for a.e $(x,t) \in Q_{\bar{u}}^0$ and every $i=1,2,3$.
	\end{lemma}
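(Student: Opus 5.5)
The plan has two parts: first the two inequalities, then the equalities for $v\in\mathcal{C}_{\overline{u}}$. Throughout we use that $\mathcal{J}'(\overline{u})v=\iint_Q(\overline{\lambda}+\gamma\overline{u})v\,dxdt$, which is the identity behind Theorem \ref{theo4.1}.

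\emph{First inequality.} By the formula preceding Theorem \ref{fnc_j1}, $j_1'(\overline{u})v=\max_{\zeta\in\partial j_1(\overline{u})}\iint_Q\zeta v\,dxdt$. Hence $j_1'(\overline{u})v\ge\iint_Q\overline{\zeta}v\,dxdt$. One can also check this pointwise from \eqref{j1_1} and \eqref{dir_deri_j1}: on $Q^\pm_{\overline{u},i}$ both integrands equal $\pm v_i$, and on $Q^0_{\overline{u},i}$ we have $|v_i|\ge\overline{\zeta}_iv_i$ because $|\overline{\zeta}_i|\le1$. Multiplying by $\kappa\ge0$ and adding $\mathcal{J}'(\overline{u})v$ gives the first inequality.

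\emph{Second inequality.} Set $d:=\overline{\lambda}+\gamma\overline{u}+\kappa\overline{\zeta}$. The variational inequality \eqref{var_ineq_j1} says $-d\in\mathcal{N}_{U_{a,b}}(\overline{u})$. Since $U_{a,b}$ is a box, testing with $u$ that differs from $\overline{u}$ only on small sets gives the usual pointwise sign conditions:
\[
d_i\ge0 \text{ where } \overline{u}_i=a_i,\qquad d_i\le0 \text{ where } \overline{u}_i=b_i,\qquad d_i=0 \text{ where } a_i<\overline{u}_i<b_i .
\]
Conditions \eqref{NUad_1}--\eqref{NUad_2} give $v_i\ge0$ where $\overline{u}_i=a_i$ and $v_i\le0$ where $\overline{u}_i=b_i$. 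So $d_iv_i\ge0$ a.e. in $Q$, and integrating yields $\mathcal{J}'(\overline{u})v+\kappa\iint_Q\overline{\zeta}v\,dxdt=\iint_Q d\cdot v\ge0$.

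An alternative that avoids pointwise arguments is to approximate $v$ by feasible directions $(u-\overline{u})/\rho$. Since $v$ is only in $L^2$, one truncates, taking $v_\rho$ equal to $v$ on the set where $a_i+\rho\le\overline{u}_i\le b_i-\rho$ and $|v_i|\le1/\rho$ (and adjusted on the active sets), and passes to the limit by dominated convergence. The pointwise route is cleaner, and it is the main technical step, since it is where the box structure of $U_{a,b}$ and the admissibility of sign-restricted $v$ are used. In particular one must argue that the sign conditions on $d$ hold a.e., using a.e.\ localisation with measurable sets.

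\emph{Equalities for $v\in\mathcal{C}_{\overline{u}}$.} If $v\in\mathcal{C}_{\overline{u}}$, the leftmost quantity in the chain is zero by definition of the critical cone. The chain of inequalities then squeezes the middle term to zero, giving the first identity in \eqref{lem5.1}. Subtracting the two equal expressions and dividing by $\kappa>0$ gives $j_1'(\overline{u})v=\iint_Q\overline{\zeta}v\,dxdt$. Using the pointwise comparison from the first step, this identity reads
\[
\sum_i\iint_{Q^0_{\overline{u},i}}\bigl(|v_i|-\overline{\zeta}_iv_i\bigr)\,dxdt=0 .
\]
Each integrand is nonnegative, so $|v_i|=\overline{\zeta}_iv_i$ a.e. on $Q^0_{\overline{u},i}$. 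Conversely, this pointwise relation gives back the equality of the two integrals, which proves the stated equivalence.
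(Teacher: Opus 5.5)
Your proof is correct. For the key inequality $\mathcal{J}'(\overline{u})v+\kappa\iint_Q\overline{\zeta}v\,dxdt\ge0$, though, it takes a different route from the paper.

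\textbf{The paper's route.} The paper never localizes. It builds the truncated directions $v_k$ of \eqref{5.1.1}: these are set to zero in the layers $a_i<\overline{u}_i<a_i+\frac{1}{k}$ and $b_i-\frac{1}{k}<\overline{u}_i<b_i$, and equal $\mathrm{Proj}_{[-k,k]}(v_i)$ elsewhere. It checks that $\overline{u}+\rho v_k\in U_{a,b}$ for $0<\rho<1/k^2$, inserts $u=\overline{u}+\rho v_k$ into \eqref{var_ineq_j1}, and lets $k\to\infty$ using $v_k\to v$ in $L^2(Q)^3$.

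\textbf{Your route.} You first extract the a.e.\ sign structure of $d=\overline{\lambda}+\gamma\overline{u}+\kappa\overline{\zeta}$ by testing with $\overline{u}$ modified on measurable sets. You then integrate $d_iv_i\ge0$.

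\textbf{Comparison.} Your route is shorter and gives more. The pointwise sign conditions you derive are exactly \eqref{eta_1}, which the paper only states after the lemma. Moreover, $d_iv_i\ge0$ a.e.\ combined with your squeeze argument immediately yields \eqref{eta_3} for critical directions. The paper's route works only with the integrated variational inequality and the fact that sign-restricted directions are $L^2$-limits of feasible ones; it is a tangent-cone argument. That is the version reused in the proof of Theorem \ref{theo_snc}, where actual feasible perturbations $\overline{u}+\rho v_k$ are needed for the second-order expansion. You also write out the squeeze argument for the ``Moreover'' part, which the paper's proof omits. That step divides by $\kappa$, so it uses $\kappa>0$.

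\textbf{A slip in your side remark.} With truncation level $1/\rho$ and margin $\rho$, the perturbation $\rho v_\rho$ can be of size $1$, so $\overline{u}+\rho v_\rho$ need not be admissible. The scales must be decoupled as in the paper: level $k$, margin $\frac{1}{k}$, step $\rho<1/k^2$. This does not affect your main, pointwise argument.
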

	\begin{proof}
		The first inequality follows from the fact that $\displaystyle \underset{\overline{\zeta}\in \partial j_1(\overline{u})}{\max} \iint_Q \overline{\zeta}v \,dxdt =j_1'(\overline{u})v$.
		To prove the remaining affirmations, we construct a sequence $\{ v_k \}$ to approximate $v$ as follows
		\begin{equation} \label{5.1.1}
			\begin{aligned} v_{k,i}(x,t)=
				\begin{cases}
					0 \ \text{if} \ a_i < \overline{u}_i(x,t) < a_i + \frac{1}{k} \ \text{or} \ b_i -\frac{1}{k} < \overline{u}_i(x,t) < b_i,\\
					\text{Proj}_{[-k,k]} \left(v_i(x,t)\right) \ \text{otherwise}, k\in \mathbb{N}.
				\end{cases}
			\end{aligned}
		\end{equation}
		Set $\rho_k=\frac{1}{k^2}$, then $\overline{u}+\rho v_k \in U_{a,b} \ \forall \, 0<\rho<\rho_k$.
		From definition \eqref{5.1.1}, we get that $\displaystyle \vert v_{k,i}(x,t) \vert \leq \vert v_i(x,t) \vert \ \forall i$ and $\displaystyle v_{k,i}(x,t) \rightarrow v(x,t)$ for a.e $x\in \Omega$, thus $v_k \rightarrow v$ in $L^2(Q)^3$.
		Consequently,
		\begin{align*}
			\rho & \left[ J'(\overline{u})v_{k,i} + \kappa \iint_Q \overline{\zeta}v_{k,i}(x,t) \, dxdt \right] \\ 
			&= \iint_Q \left( \overline{\lambda} + \gamma \overline{u} + \kappa \overline{\zeta} \right) \left[ \left( \overline{u} + \rho v_{k,i} \right) - \overline{u} \right] \, dxdt \geq 0.
		\end{align*}
		Passing to the limit as $k \rightarrow \infty$ yields $\displaystyle J'(\overline{u})v + \kappa \iint_Q \overline{\zeta} v \, dxdt \geq 0$.
	\end{proof}
	
	Henceforth, for convenience, we define a function $\overline{\eta}(x,t)=\overline{\lambda}(x,t) + \gamma \overline{u}(x,t) + \kappa \overline{\zeta}(x,t).$ Applying the above-mentioned results, we get the following properties of $\overline{\eta}(x,t)$
	\begin{equation} \label{eta_1}
		\begin{aligned}
			\begin{cases}
				\overline{u}_i(x,t)=a_i &\Rightarrow  \overline{\eta}(x,t) \geq 0, \\
				\overline{u}_i(x,t) =b_i &\Rightarrow \overline{\eta}(x,t) \leq 0, \hspace{20pt} 1\leq i \leq 3, \\
				a_i  < \overline{u}_i(x,t) < b_i &\Rightarrow \overline{\eta}(x,t) =0,
			\end{cases}
		\end{aligned}
	\end{equation}
	and 
	\begin{equation} \label{eta_2}
		\begin{aligned}
			\begin{cases}
				\overline{\eta}_i(x,t) > 0 &\Rightarrow \overline{u}_i(x,t) = a_i, \\
				\overline{\eta}_i(x,t)<0 &\Rightarrow \overline{u}_i(x,t) =b_i,
			\end{cases} 
		\end{aligned} \hspace{20pt} 1\leq i \leq 3 \textcolor{red}{.}
	\end{equation}
	It follows from the variational inequality that 
	\begin{equation*}
		\iint_Q \overline{\eta}v \, dxdt = \mathcal{J}'(\overline{u})v + \kappa \iint_Q \overline{\zeta}v \, dxdt = 0 \ \forall v\in \mathcal{C}_{\overline{u}}.
	\end{equation*}
	Together with the definition of $\mathcal{C}_{\overline{u}}$ we get that
	\begin{equation} \label{eta_3}
		\overline{\eta}_i(x,t)v_i(x,t) = 0 \ \text{for a.e} \ (x,t)\in Q, \forall i = \overline{1,3}, \forall v\in \mathcal{C}_{\overline{u}}.
	\end{equation}
	Due to the uncertainty in the existence of second directional derivatives of the functions $j_i, i =1,2,3$, the following theorems give the second-order optimality conditions for $\mathcal{J}$ with respect to the critical cone $\mathcal{C}_{\overline{u}}$.
	\begin{theorem} \label{theo_snc}
		If $\overline{u}$ \textcolor{red}{is} a local minimum of $(\textbf{P}_k)$ then
		\begin{equation} \label{snc}
			\mathcal{J}''(\overline{u})v^2 \geq 0 \ \forall v\in \mathcal{C}_{\overline{u}}.
		\end{equation}
	\end{theorem}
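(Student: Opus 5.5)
The plan is to follow the standard argument of Casas--Herzog--Wachsmuth \cite{CHW2012,CHW2017}, written for $k=1$ (the cases $k=2,3$ are analogous). Fix $v\in\mathcal{C}_{\overline{u}}$. We first reduce to bounded directions that stay strictly inside the box where $\overline{u}$ is not active. To do this we use the truncation \eqref{5.1.1} from the proof of Lemma \ref{5.1}. Let $v_k$ be defined by \eqref{5.1.1}, and additionally set $v_{k,i}=0$ wherever $v_i=0$. Then $\overline{u}+\rho v_k\in U_{a,b}$ for all $0<\rho<\rho_k=1/k^2$. Moreover $|v_{k,i}|\le|v_i|$, $v_k\to v$ in $L^2(Q)^3$, and $v_{k,i}$ has the same sign as $v_i$ pointwise. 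It follows that $v_k$ satisfies \eqref{NUad_1}--\eqref{NUad_2}. By \eqref{eta_3} we have $\overline{\eta}_iv_{k,i}=0$ a.e., and the sign relation $|v_{k,i}|=\overline{\zeta}_iv_{k,i}$ on $Q^0_{\overline{u}}$ is inherited from $v$. Hence $v_k\in\mathcal{C}_{\overline{u}}$ as well, and so
\[
\mathcal{J}'(\overline{u})v_k+\kappa\iint_Q\overline{\zeta}v_k\,dxdt=0,\qquad j_1'(\overline{u})v_k=\iint_Q\overline{\zeta}v_k\,dxdt,
\]
by Lemma \ref{5.1}.

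Next comes the second-order expansion along $\overline{u}+\rho v_k$. Local optimality gives, for $\rho$ small,
\[
0\le \mathcal{J}(\overline{u}+\rho v_k)-\mathcal{J}(\overline{u})+\kappa\bigl(j_1(\overline{u}+\rho v_k)-j_1(\overline{u})\bigr).
\]
For the smooth part we use Taylor's formula, available by Theorem \ref{2ndS} and (A1):
\[
\mathcal{J}(\overline{u}+\rho v_k)-\mathcal{J}(\overline{u})=\rho\mathcal{J}'(\overline{u})v_k+\tfrac{\rho^2}{2}\mathcal{J}''(\overline{u}+\theta\rho v_k)v_k^2 .
\]
For the nonsmooth part the key observation is that $j_1$ is exactly linear along $v_k$ for small $\rho$. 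Pointwise, $|\overline{u}_i+\rho v_{k,i}|=|\overline{u}_i|+\rho\,\overline{\zeta}_iv_{k,i}$ holds in three situations:
\begin{itemize}
\item on $Q^0_{\overline{u}}$, because of the sign relation $|v_{k,i}|=\overline{\zeta}_iv_{k,i}$;
\item on $\{\overline{u}_i\neq0\}$, provided $\rho|v_{k,i}|\le|\overline{u}_i|$;
\item whenever the truncated direction vanishes.
\end{itemize}
The difficulty is the set where $0<|\overline{u}_i|<\rho k$, since there the sign of $\overline{u}_i$ can flip. To remove it, we also set $v_{k,i}=0$ where $0<|\overline{u}_i|<1/k$, and take $\rho<1/k^2$. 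With this modification $j_1(\overline{u}+\rho v_k)-j_1(\overline{u})=\rho\iint_Q\overline{\zeta}v_k$ exactly. The first-order terms then cancel by the identity above, and dividing by $\rho^2/2$ gives $\mathcal{J}''(\overline{u}+\theta\rho v_k)v_k^2\ge0$. Letting $\rho\to0$ and using continuity of $\mathcal{J}''$ (Theorem \ref{2ndS}) yields $\mathcal{J}''(\overline{u})v_k^2\ge0$.

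The main obstacle is the extra truncation on $\{0<|\overline{u}_i|<1/k\}$: we must check that $v_k\to v$ in $L^2$ still holds. This is true because these sets shrink to $\emptyset$ as $k\to\infty$, since the condition $|\overline{u}_i|>0$ is excluded. We must also check that the modified $v_k$ remains in $\mathcal{C}_{\overline{u}}$. This holds because zeroing components preserves the sign constraints and the relation $\overline{\eta}_iv_{k,i}=0$. Finally, $\mathcal{J}''(\overline{u})$ is a continuous quadratic form on $L^2(Q)^3$. Its expression through $S'(\overline{u})$, $S''(\overline{u})$, $g''_{yy}$ and the adjoint, via Theorems \ref{1stS}--\ref{2ndS}, is continuous in $L^2$. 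Therefore $\mathcal{J}''(\overline{u})v_k^2\to\mathcal{J}''(\overline{u})v^2$, which gives \eqref{snc}.
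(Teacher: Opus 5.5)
Your proposal is correct for $k=1$ and follows essentially the same route as the paper. Both use truncated directions $v_k$ vanishing near the bounds and on $\{0<|\overline{u}_i|<1/k\}$, the exact linearity $j_1(\overline{u}+\rho v_k)-j_1(\overline{u})=\rho\iint_Q\overline{\zeta}v_k\,dxdt$ for $\rho<1/k^2$, and cancellation of the first-order terms via $\overline{\eta}_iv_{k,i}=0$, then let $\rho\to0$ and $k\to\infty$. Your mean-value form of Taylor's formula, and your $v_k$ in place of the misprinted $v_i$ in \eqref{5.2.1}, are only cosmetic differences.

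One caution, which applies to the paper as well: the cases $k=2,3$ are not literally analogous. The functionals $j_2$ and $j_3$ are not linear along $\overline{u}+\rho v_k$, since the $L^2(0,T)$-norm is curved off the ray through $\overline{u}$, so the same expansion only yields $\mathcal{J}''(\overline{u})v^2+\kappa\,j_k''(\overline{u};v^2)\geq 0$, with a nonnegative curvature term coming from $j_k$.
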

	\begin{proof}
		Given $v \in \mathcal{C}_{\overline{u}}$ we define $v_k=(v_{k,1}, v_{k,2}, v_{k,3})$ for $k\in \mathbb{Z}^+$ by
		\begin{equation*}
			\begin{aligned}
				v_{k,i}(x,t)=\begin{cases}
					0 \ \text{if} \ a_i < \overline{u}_i(x,t) < a_i + \frac{1}{k} \ \text{or} \ b_i - \frac{1}{k} < \overline{u}_i(x,t) < b_i \\ \hspace{130pt} \text{or} \ 0< \vert \overline{u}_i(x,t)\vert < \frac{1}{k},\\
					\text{Proj}_{[-k,k]} (v_i(x,t)) \ \text{otherwise}, \hspace{10pt} i=1,2,3.
				\end{cases}
			\end{aligned}
		\end{equation*}
		Then using the definition of $\overline{\eta}(x,t)$, we can check that
		\begin{align*}
			\overline{u}+ \rho v_k \in U_{a,b} \ &\text{for every} \ \rho \in \left( 0, \frac{1}{k^2} \right), \\
			v_k \rightarrow v \ &\text{in} \ L^2(Q)^3, \\
			\overline{\eta}_i(x,t) v_{k,i}(x,t) = 0 \ &\text{for a.e} \ (x,t)\in Q, \forall i =1,2,3.
		\end{align*}
		Now since $\overline{u}$ is a local minimum, we consider the difference
		\begin{equation*}
			\mathcal{J}_1 (\overline{u}+\rho v_k) - \mathcal{J}_1(\overline{u}) = \mathcal{J}(\overline{u}+\rho v_k) - \mathcal{J}(\overline{u}) + \kappa(j_1(\overline{u}+\rho v_k) - j_1(\overline{u})) \geq 0,
		\end{equation*}
		or equivalently,
		\begin{equation*}
			\rho \mathcal{J}'(\overline{u})v_k + \frac{\rho^2}{2} \mathcal{J}''(\overline{u})v_k^2 + o(\rho^2) +\kappa(j_1(\overline{u}+\rho v_k) - j_1(\overline{u})) \geq 0.
		\end{equation*}
		Let $\overline{\zeta}$ be the unique element of $\partial j(\overline{u})$ as in Theorem \ref{fnc_j1}, then
		\begin{equation} \label{5.2.1}
			j_1(\overline{u}+\rho v_k) - j_1(\overline{u}) = \rho \sum_{i=1}^3 \iint_Q \overline{\zeta}(x,t) v_i(x,t) \, dxdt \hspace{10pt} \forall \rho \in \left( 0, \frac{1}{k^2} \right).
		\end{equation}
		Substituting \eqref{5.2.1} into the above inequality yields
		\begin{equation*}
			\rho \sum_{i=1}^3 \iint_Q \overline{\eta}_i(x,t) v_{k,i}(x,t) \, dxdt + \frac{\rho^2}{2} \mathcal{J}''(\overline{u})v_k^2 + o(\rho^2) \geq 0.
		\end{equation*}
		Since the first term of the left-hand side is equal to $0$ by the property of $\overline{\eta}(x,t)$, we get
		\begin{equation*}
			\frac{\rho^2}{2} \mathcal{J}''(\overline{u})v_k^2 + o(\rho^2) \geq 0.
		\end{equation*}
		Now, fixing $k$, dividing both side by $\rho^2$ and passing to the limit imply that $\mathcal{J}''(\overline{u})v_k^2 \geq 0$. Then, passing to the limit again when $k\rightarrow \infty $ gives rise to the desired statement \eqref{snc}. The proof is complete.
	\end{proof}
	\begin{theorem} \label{theo_ssc}
		Let $\overline{u} \in U_{a, b}$ and $\overline{\zeta}\in \partial j_1(\overline{u})$ such that \eqref{var_ineq_j1} holds. Then the following statements are equivalent:
		\begin{enumerate}
			\item $\mathcal{J}''(\overline{u})v^2 > 0$ for every $v\in \mathcal{C}_{\overline{u}} \setminus \{0 \}. $\\
			\item There exists $\tau>0$ and $\delta>0$ such that
			\begin{equation*}
				\mathcal{J}''(\overline{u})v^2 \geq \delta \Vert v \Vert^2 _{L^2(0,T;\mathbb{L}^2(\Omega))}
			\end{equation*}
			for every $v \in \mathcal{C}^\tau_{\overline{u}}$. Here $\mathcal{C}_{\overline{u}}^\tau$ consists of all $v \in L^2(0,T;\mathbb{L}^2(\Omega))$ that satisfies \eqref{NUad_1}, \eqref{NUad_2} and the following inequality
			\begin{equation*}
				\mathcal{J}'(\overline{u})v + \kappa j_1'(\overline{u})v \leq \tau \Vert v \Vert_{L^2(0,T;\mathbb{L}^2(\Omega))}.
			\end{equation*}
		\end{enumerate} 
	\end{theorem}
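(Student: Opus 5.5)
The implication (2)$\Rightarrow$(1) is the easy direction. If $v\in\mathcal{C}_{\overline{u}}\setminus\{0\}$, then by definition of the critical cone $\mathcal{J}'(\overline{u})v+\kappa j_1'(\overline{u})v=0\le \tau\Vert v\Vert_{L^2(Q)^3}$. Also $v$ satisfies \eqref{NUad_1}--\eqref{NUad_2}, so $v\in\mathcal{C}^\tau_{\overline{u}}$ for every $\tau>0$. Hence $\mathcal{J}''(\overline{u})v^2\ge\delta\Vert v\Vert^2>0$.

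For (1)$\Rightarrow$(2) I will argue by contradiction, following the standard scheme of \cite{CHW2017, casas2017}. Suppose there are $\tau_k=1/k$ and $v_k\in\mathcal{C}^{1/k}_{\overline{u}}$ with $\mathcal{J}''(\overline{u})v_k^2<\frac1k\Vert v_k\Vert^2$. Both conditions are positively homogeneous in $v$: \eqref{NUad_1}, \eqref{NUad_2}, the $\tau$-inequality (because $j_1'(\overline{u})$ is positively homogeneous, see \eqref{dir_deri_j1}) and the quadratic form. So we may normalize $\Vert v_k\Vert_{L^2(Q)^3}=1$ and extract a subsequence $v_k\rightharpoonup v$ weakly in $L^2(Q)^3$.

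I will then show $v\in\mathcal{C}_{\overline{u}}$ in three steps.
\begin{enumerate}
\item The sign conditions \eqref{NUad_1}--\eqref{NUad_2} define a closed convex set, hence a weakly closed one, so $v$ inherits them.
\item The functional $w\mapsto \mathcal{J}'(\overline{u})w+\kappa j_1'(\overline{u})w$ is convex and continuous on $L^2(Q)^3$. Here $\mathcal{J}'(\overline{u})w=\iint_Q(\overline{\lambda}+\gamma\overline{u})w$ is linear, and $j_1'(\overline{u})$ is a sum of linear terms plus $\iint_{Q^0}|w_i|$. It is therefore weakly lower semicontinuous, which gives $\mathcal{J}'(\overline{u})v+\kappa j_1'(\overline{u})v\le\liminf \frac1k=0$.
\item Lemma \ref{5.1} gives the reverse inequality $\ge0$, so equality holds and $v\in\mathcal{C}_{\overline{u}}$.
\end{enumerate}

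Next I need weak lower semicontinuity of the quadratic form. Using the adjoint representation,
\[\mathcal{J}''(\overline{u})w^2=\gamma\Vert w\Vert^2+\int_0^T\langle g''_{yy}(t,\overline{y})z_w,z_w\rangle\,dt-2\int_0^T\langle\widetilde{B}(z_w,z_w+\alpha^2Az_w),\overline{\lambda}\rangle\,dt,\]
with $z_w=S'(\overline{u})w$. By Theorem \ref{1stS}, Lemma \ref{LipchitzS} and the compact embedding $W(0,T;H^3,V)\hookrightarrow L^2(0,T;D(A))$, weak convergence $v_k\rightharpoonup v$ implies $z_{v_k}\to z_v$ strongly in $L^2(0,T;D(A))$, with a uniform bound in $W(0,T;H^3,V)$. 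The trilinear term then converges, by exactly the estimates used in Lemma \ref{lem3.1}. For the $g''$ term I need continuity of $g''_{yy}(t,\overline{y}(t))$ as a bilinear form on $V$, uniformly in $t$, which I will take from (A1) together with $\overline{y}\in C([0,T];V)$. This gives
\[\mathcal{J}''(\overline{u})v_k^2-\gamma\Vert v_k\Vert^2\to\mathcal{J}''(\overline{u})v^2-\gamma\Vert v\Vert^2.\]

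Combining with $\Vert v\Vert\le\liminf\Vert v_k\Vert$, I obtain
\[\mathcal{J}''(\overline{u})v^2\le\liminf_{k\to\infty}\mathcal{J}''(\overline{u})v_k^2\le0.\]
Hypothesis (1) then forces $v=0$. But then the non-$\gamma$ part of the limit vanishes, so
\[\gamma=\gamma\lim_{k\to\infty}\Vert v_k\Vert^2=\lim_{k\to\infty}\mathcal{J}''(\overline{u})v_k^2\le0,\]
contradicting $\gamma>0$.

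The main obstacle is the strong convergence of the state-dependent part of $\mathcal{J}''$ under weak convergence of the controls. The nonlinear term $\widetilde{B}$ contains $Az$, so I need compactness in $L^2(0,T;D(A))$ of the linearized states together with $\overline{\lambda}\in L^2(0,T;\mathbb{H}^3_\sigma)\cap L^\infty(0,T;D(A))$ from Lemma \ref{adj_wellposedness}. I would bound $|b(z,Az,\overline{\lambda})|$ via Lemma \ref{lem2.1} by $C\Vert z\Vert_{D(A)}\,|Az|\,\Vert\overline{\lambda}\Vert_{H^3}$, which is quadratic in the strongly converging quantity.
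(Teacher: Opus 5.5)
Your argument is correct and has the same skeleton as the paper's proof. Both argue by contradiction with normalized $v_k\in\mathcal{C}^{1/k}_{\overline{u}}$, take a weak limit $v$ that keeps the sign conditions, and get $v\in\mathcal{C}_{\overline{u}}$ from weak lower semicontinuity of $w\mapsto\mathcal{J}'(\overline{u})w+\kappa j_1'(\overline{u})w$ together with Lemma \ref{5.1}. Both then show $v=0$ and reach the contradiction $\gamma\le 0$.

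Where you differ is the passage to the limit in $\mathcal{J}''(\overline{u})v_k^2$.
\begin{itemize}
\item The paper keeps the term $\int_0^T g'_y(t,\overline{y})\overline{m}_v\,dt$ and gets weak lower semicontinuity of the $g''_{yy}$-part from the convexity of $g$. It asserts weak convergence of $\overline{m}_{w_k}$, but its formula is written with $w_k$ where $\overline{z}_{w_k}$ is meant, and since $w\mapsto\overline{m}_w$ is quadratic this needs compactness anyway. It then claims $\mathcal{J}''(\overline{u})w_k^2\to\gamma$ from $\overline{z}_{w_k}\rightharpoonup 0$ alone.
\item You instead rewrite the second-order state term against $\overline{\lambda}$ as a trilinear form. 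You use the compact embedding $W(0,T;H^3,V)\hookrightarrow L^2(0,T;D(A))$ to get $\overline{z}_{v_k}\to\overline{z}_v$ strongly, which yields genuine convergence of the whole state-dependent part.
\end{itemize}
That strong convergence is exactly what justifies both $\mathcal{J}''(\overline{u})v^2\le\liminf_k\mathcal{J}''(\overline{u})v_k^2$ and the final limit $\gamma$. So your route is the more rigorous of the two.

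The one thing you import that is not literally in (A1) is a bound on $g''_{yy}(t,\overline{y}(t))$ as a bilinear form on $V$, uniform in $t$. (A1) gives no regularity of $g''_{yy}$ in $t$; only the Lipschitz bound on $g$ is uniform in $t$. Some such hypothesis is implicitly needed for the paper's claim that $\mathcal{J}$ is $C^2$ on $L^2(Q)^3$, but it does not follow from (A1) as stated.

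You can avoid it by combining your strong convergence with the paper's convexity idea:
\begin{itemize}
\item Pass to a subsequence with $\overline{z}_{v_k}(t)\to\overline{z}_v(t)$ in $V$ for a.e.\ $t$.
\item Positive semidefiniteness of $g''_{yy}$ and Fatou's lemma then give lower semicontinuity of the $g''$-integral.
\item In the last step, the $g''$-part is $\ge 0$ and the trilinear part tends to $0$, so $\liminf_k\mathcal{J}''(\overline{u})v_k^2\ge\gamma$. This is all the contradiction requires.
\end{itemize}
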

	
	\begin{proof}
		Given that $\mathcal{C}_{\overline{u}} \subset \mathcal{C}_{\overline{u}}^\tau$, the first statement appears to be the direct consequence of the second one. 
		Let us prove the converse implication by contradiction. Assume that the first condition holds, whereas the second one does not. Then, there exists a sequence $\{ v_k \} $ such that
		\begin{equation*}
			v_k \in \mathcal{C}_{\overline{u}}^{1/k} \ \text{and} \ \mathcal{J}''(\overline{u}
			)v_k^2 < \frac{1}{k} \Vert v_k \Vert_{L^2(0,T;\mathbb{L}^2(\Omega))}^2.
		\end{equation*}
		Since $v_k \ne 0$ for every $k$, we can set $w_k = \frac{v_k}{\Vert v_k \Vert_{L^2(0,T;\mathbb{L}^2(\Omega))}}$ to obtain
		\begin{equation} \label{5.3.1}
			w_k \in \mathcal{C}_{\overline{u}}^{1/k}, \Vert w_k \Vert_{L^2(0,T;\mathbb{L}^2(\Omega))}=1 \ \text{and} \ \mathcal{J}''(\overline{u})w_k^2 < \frac{1}{k}.
		\end{equation}
		From the boundedness, we can extract from $\{ w_k\}$ a subsequence, denoted again by $\{ w_k\}$, converging weakly to some $v \in L^2(Q)^3$. We will show that $v$ satisfies \eqref{NUad_1}, \eqref{NUad_2}, and belongs to the critical cone $\mathcal{C}_{\overline{u}}$, yet is inconsistent with the second-order sufficient conditions. \\
		Indeed, suppose that the sign condition \eqref{NUad_1} is not true. Then for some $i$ and $k$, one can find a subset $A$ of the set $\{(x,t) \in Q: \overline{u}_i(x,t) = a_i \} $ with positive measure, such that 
		\begin{equation} \label{5.3.2}
			v_i(x,t) <0 \ \forall  (x,t) \in A
		\end{equation}
		We define $\xi \in L^2(0,T;L^2(\Omega))$ by
		\begin{equation*} \xi(x,t)=
			\begin{aligned}
				\begin{cases}
					1 \ &\text{if} \ (x,t) \in A,\\
					0 \ &\text{if} \ (x,t) \in Q \setminus A.
				\end{cases}
			\end{aligned}
		\end{equation*}
		Since $w_k \rightharpoonup v$ in $L^2(0,T;\mathbb{L}^2(\Omega))$, we obtain that $w_{k,i} \rightharpoonup v_i$ in $L^2(0,T;L^2(\Omega))$, thus
		\begin{equation*}
			\int_A w_{k,i} (x,t) \, dxdt \rightarrow \int_A v_i(x,t) \, dxdt.
		\end{equation*}
		However, the above relation fails to hold because $w_k \in C_{\overline{u}}^{1/k}$. Consequently, $v$ satisfies the first sign condition \eqref{NUad_1}, while the remaining condition \eqref{NUad_2} follows by similar arguments. \\
		To prove that $v$ belongs to the critical cone $\mathcal{C}_{\overline{u}}$, taking into account that the mapping $w \mapsto \mathcal{J}'(\overline{u})w + \kappa j_1'(\overline{u})w\in \mathbb{R}, w\in L^2(0,T;\mathbb{L}^2(\Omega))$ is weakly lower semicontinuous.
		Hence, 
		\begin{equation*}
			\mathcal{J}'(\overline{u})v + \kappa j_1'(\overline{u})v \leq \underset{k\rightarrow \infty}{\liminf} \left( \mathcal{J}'(\overline{u})w_k + \kappa j_1'(\overline{u})w_k \right) \leq \underset{k\rightarrow \infty}{\liminf} \frac{1}{k} = 0.
		\end{equation*}
		Combining with the first-order necessary condition leads to $\mathcal{J}'(\overline{u})v + \kappa j_1'(\overline{u})v =0$, implying that $v \in \mathcal{C}_{\overline{u}}$. Now we shall prove that $v=0$, which finally gives rise to the contradiction.
		Remind that
		\begin{equation*}
			\mathcal{J}''(\overline{u})v^2 = \int_0^T \left[ g''_{yy}(t, \overline{y}(t))\overline{z}^2_v + g'_y(t,\overline{y}(t))\overline{m}_v \right] \,dt + \gamma \iint_Q \vert v(x,t) \vert^2 \, dxdt,
		\end{equation*}
		where $\overline{y}=S(\overline{u}), \overline{z}_v=S'(\overline{u})v$, and $\overline{m}_v=S''(\overline{u})v^2$. \\ From the linearized equations, we get the boundedness of the sequence $\{ \overline{z}_{w_k} \}$ in the space $W(0,T;H^3, V)$. Hence, we can extract from $\{ \overline{z}_{w_k} \}$ a subsequence that converges weakly in $W(0,T;H^3, V)$, which is continuously embedded in $C(0,T;V)$ by Aubin-Lions theorem. Additionally, the function $g$ is strictly convex, thus $g''_{yy}(t, \overline{y}(t))$ is a positive semidefinite bilinear form, meaning that $g''_{yy}(t, \overline{y}(t))$ is convex. Then, $g''_{yy}(t, \overline{y}(t))$ is weakly lower semicontinuous and 
		\begin{equation} \label{5.3.3}
			g''_{yy}(t, \overline{y}(t))\overline{z}_v^2(t) \leq \underset{k\rightarrow \infty}{\liminf} g''_{yy}(t, \overline{y})\overline{z}^2_{w_k}
		\end{equation}
		for all $t \in [0,T]$. Similarly, we have 
		\begin{equation*}
			\begin{aligned}
				\overline{m}_{w_k} = S''(\overline{u})w_k^2 = -2S'(\overline{u})\left[ \widetilde{B}(w_k, w_k + \alpha^2 A w_k) \right] \\ \rightharpoonup -2S'(\overline{u})\left[ \widetilde{B}(v, v + \alpha^2 A v) \right] = S''(\overline{u})v^2 =\overline{m}_v \in 
				W(0,T;H^3,V).  
			\end{aligned}
		\end{equation*}
		Thus we obtain
		\begin{equation} \label{5.3.4}
			g'(\overline{y}(t))\overline{m}_v(t) = \underset{k\rightarrow \infty}{\lim}  g'(\overline{y}(t))\overline{m}_{w_k}(t).
		\end{equation}
		Combing \eqref{5.3.3}, \eqref{5.3.4} and Lemma \ref{lem3.1} yields
		\begin{equation*}
			\mathcal{J}(\overline{u})v^2 \leq \underset{k \rightarrow \infty }{\liminf} \mathcal{J}''(\overline{u})w_k^2 \leq \underset{k \rightarrow \infty}{\limsup} \mathcal{J}''(\overline{u})w_k^2 \leq \underset{k \rightarrow \infty}{\liminf} \frac{1}{k} = 0.
		\end{equation*}
		From the first condition, we get that $v=0$ and $\displaystyle \underset{k\rightarrow \infty }{\lim} \mathcal{J}''(\overline{u})w_k^2 = 0$.
		Since $v=0$ we have $\overline{z}_v=0$ and then $\overline{z}_{w_k} \rightharpoonup 0 $ in $W(0, T;H^3, V)$. Hence, $\mathcal{J}''(\overline{u})w_k^2 \rightarrow \gamma$, which leads to the contradiction. 
	\end{proof}
	\begin{theorem}
		Assume that $(\overline{u}, \overline{\zeta}) \in U_{a, b} \times \partial j_1(\overline{u})$ satisfies \eqref{var_ineq_j1}. Furthermore, let the following second-order sufficient condition be fulfilled
		\begin{equation} \label{ssc}
			\mathcal{J}''(\overline{u})v^2 >0 \ \forall v \in \mathcal{C}_{\overline{u}} \setminus \{0 \},
		\end{equation}
		then there exists $\varepsilon>0$ and $\delta >0$ such that
		\begin{equation} \label{grow_ineq}
			\mathcal{J}_1 (\overline{u})+ \frac{\delta}{2} \Vert u - \overline{u} \Vert^2_{L^2(0,T;\mathbb{L}^2(\Omega))} \leq \mathcal{J}_1 (u) \ \forall u \in B_\varepsilon(\overline{u}) \cap U_{a,b},
		\end{equation}
		where $\displaystyle B_\varepsilon(\overline{u}) = \{ u\in L^2(0,T;\mathbb{L}^2(\Omega)) : \Vert u - \overline{u} \Vert_{L^2(0,T;\mathbb{L}^2(\Omega))} \leq \varepsilon \}.$
		Moreover, since $\overline{u}$ satisfying \eqref{grow_ineq} is also a local solution of the following problem
		\begin{equation*}
			\underset{u \in U_{a, b}}{\min} F(u) := \mathcal{J}_1(u) - \frac{\delta}{2} \Vert u - \overline{u} \Vert^2_{L^2(0,T;\mathbb{L}^2(\Omega))},
		\end{equation*}
		we consequently get the following inequality
		\begin{equation*}
			\mathcal{J}''(\overline{u})v^2 \geq \delta\Vert v \Vert^2_{L^2(0,T;\mathbb{L}^2(\Omega))} \ \forall v\in \mathcal{C}_{\overline{u}}.
		\end{equation*}
	\end{theorem}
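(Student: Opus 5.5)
We argue by contradiction, following the standard scheme of Casas--Herzog--Wachsmuth. Suppose \eqref{grow_ineq} fails for every $\varepsilon,\delta>0$. Then there is a sequence $u_k \in U_{a,b}$ with $\Vert u_k-\overline{u}\Vert_{L^2(0,T;\mathbb{L}^2(\Omega))}<\frac1k$ and
\begin{equation*}
\mathcal{J}_1(u_k) < \mathcal{J}_1(\overline{u}) + \frac{1}{2k}\Vert u_k-\overline{u}\Vert^2_{L^2(0,T;\mathbb{L}^2(\Omega))}.
\end{equation*}
Set $\rho_k=\Vert u_k-\overline{u}\Vert_{L^2(0,T;\mathbb{L}^2(\Omega))}$ and $v_k=(u_k-\overline{u})/\rho_k$. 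Passing to a subsequence, $v_k\rightharpoonup v$ weakly in $L^2(Q)^3$.

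\textbf{Step 1: $v\in\mathcal{C}_{\overline{u}}$.} Each $v_k$ satisfies the sign conditions \eqref{NUad_1}--\eqref{NUad_2}. These define a weakly closed convex set, so $v$ satisfies them as well. Next we use the mean value theorem, the $C^2$ property of $\mathcal{J}$ (Theorems \ref{1stS}, \ref{2ndS}) and convexity of $j_1$, which gives $j_1(u_k)-j_1(\overline{u})\ge \rho_k\iint_Q\overline{\zeta}v_k$. Dividing by $\rho_k$ yields
\begin{equation*}
\mathcal{J}'(\overline{u})v_k+\kappa\iint_Q\overline{\zeta}v_k\,dxdt \le o(1).
\end{equation*}
The left-hand side is linear and continuous in $v_k$, so passing to the limit gives $\mathcal{J}'(\overline{u})v+\kappa\iint_Q\overline{\zeta}v\le 0$. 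Lemma \ref{5.1} gives the reverse inequality, hence equality holds.

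To upgrade this to $\mathcal{J}'(\overline{u})v+\kappa j_1'(\overline{u})v=0$, we use $j_1(\overline{u}+\rho_kv_k)-j_1(\overline{u})\ge\rho_k j_1'(\overline{u})v_k$. The functional $w\mapsto j_1'(\overline{u})w$ is convex and continuous on $L^2$, hence weakly lower semicontinuous, so $\mathcal{J}'(\overline{u})v+\kappa j_1'(\overline{u})v\le\liminf(\ldots)\le0$. Combined with Lemma \ref{5.1}, this gives $v\in\mathcal{C}_{\overline{u}}$.

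\textbf{Step 2: second-order expansion.} We use the key cancellation from the variational inequality: $\iint_Q\overline{\eta}(u_k-\overline{u})\ge0$, i.e.
\begin{equation*}
\mathcal{J}'(\overline{u})(u_k-\overline{u})+\kappa\iint_Q\overline{\zeta}(u_k-\overline{u})\ge0,
\end{equation*}
together with $\kappa(j_1(u_k)-j_1(\overline{u}))\ge\kappa\iint_Q\overline{\zeta}(u_k-\overline{u})$. A Taylor expansion of $\mathcal{J}$ then gives
\begin{equation*}
\frac{\rho_k^2}{2}\mathcal{J}''(\overline{u}+\theta_k(u_k-\overline{u}))v_k^2<\frac{\rho_k^2}{2k}.
\end{equation*}
Hence $\mathcal{J}''(\overline{u}+\theta_k\rho_kv_k)v_k^2<\frac1k$.

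\textbf{Step 3: passing to the limit (main obstacle).} We must show $\mathcal{J}''(\overline{u})v^2\le\liminf\mathcal{J}''(\overline{u}+\theta_k\rho_kv_k)v_k^2$ and, in the case $v=0$, reach a contradiction. Write $\mathcal{J}''(w)v^2$ via the formula used in the proof of Theorem \ref{theo_ssc}. The states and linearized states at $w_k=\overline{u}+\theta_k\rho_kv_k$ converge strongly by Lemma \ref{LipchitzS}. The difference of the linearized states $S'(w_k)v_k-S'(\overline{u})v_k$ tends to $0$ in $W(0,T;H^3,V)$, by continuity of $S'$ in operator norm. Then $z_{v_k}\rightharpoonup z_v$, with strong convergence in $L^2(0,T;D(A))$ by compactness. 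The $g''$-term is handled by convexity and lower semicontinuity, as in \eqref{5.3.3}, together with continuity of $g''$ and (A1). The $g'\,m$ term converges by Lemma \ref{lem3.1} and the argument for \eqref{5.3.4}. The term $\gamma\Vert v_k\Vert^2=\gamma$ passes via weak lower semicontinuity.

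We thus obtain $\mathcal{J}''(\overline{u})v^2\le0$, and \eqref{ssc} forces $v=0$. But then every $\overline{y}$-dependent term tends to $0$, while $\gamma\Vert v_k\Vert^2=\gamma$. This gives $\liminf\mathcal{J}''(w_k)v_k^2\ge\gamma>0$, contradicting the bound $<\frac1k$. This proves \eqref{grow_ineq}.

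\textbf{Final claim.} Since $\overline{u}$ locally minimizes $F$, Theorem \ref{theo_snc} applies to $F$. Its critical cone coincides with $\mathcal{C}_{\overline{u}}$, because the quadratic term has zero derivative at $\overline{u}$. This gives $\mathcal{J}''(\overline{u})v^2-\delta\Vert v\Vert^2\ge0$ on $\mathcal{C}_{\overline{u}}$.

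The delicate point is Step 3: controlling uniformly the second derivative at the intermediate points $w_k$ rather than at $\overline{u}$, which needs continuity of $S''$ and of $g''$ along strongly convergent states.
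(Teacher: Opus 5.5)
Your proof is correct, but it is organized differently from the paper's. The paper argues directly. For $u\in B_\varepsilon(\overline{u})\cap U_{a,b}$ it writes $u=\overline{u}+\rho v$ with $\Vert v\Vert_{L^2(Q)^3}=1$ and uses
\[
\mathcal{J}_1(u)-\mathcal{J}_1(\overline{u})\ge\rho\bigl[\mathcal{J}'(\overline{u})v+\kappa j_1'(\overline{u})v\bigr]+\frac{\rho^2}{2}\mathcal{J}''(\overline{u})v^2+o(\rho^2).
\]
It then splits according to whether $v$ lies in the extended cone $\mathcal{C}^\tau_{\overline{u}}$ of Theorem \ref{theo_ssc}. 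If it does, the bracket is nonnegative by Lemma \ref{5.1} and $\mathcal{J}''(\overline{u})v^2\ge\delta$. If not, the bracket exceeds $\tau$ and dominates the $O(\rho^2)$ terms.

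In the paper, all the compactness work is done once, inside the proof of Theorem \ref{theo_ssc}. That work consists of taking weak limits of normalized directions, using weak lower semicontinuity of $v\mapsto\mathcal{J}''(\overline{u})v^2$, and exploiting the Legendre-type role of $\gamma\Vert v\Vert^2$ when the weak limit vanishes. You instead run that compactness argument directly on a sequence violating the growth inequality, and you never need $\mathcal{C}^\tau_{\overline{u}}$. This makes your proof self-contained. You also cancel the first-order term exactly, via the variational inequality together with the subgradient inequality $j_1(u_k)-j_1(\overline{u})\ge\iint_Q\overline{\zeta}(u_k-\overline{u})\,dxdt$, rather than through $j_1'(\overline{u})$ and Lemma \ref{5.1}.

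The paper's decomposition buys a cleaner separation. Once $(\tau,\delta)$ are available, $\varepsilon$ is determined by $\tau$, $\Vert\mathcal{J}''(\overline{u})\Vert$ and the modulus of continuity of $\mathcal{J}''$. Moreover, coercivity on an extended cone is exactly the form of second-order condition used later in Section 6 (cf. $(\textbf{SSC})$ and Lemma \ref{lem6.3}).

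Two minor remarks on your write-up. First, the step you call delicate is immediate from $\mathcal{J}\in C^2$, since $|\mathcal{J}''(w_k)v_k^2-\mathcal{J}''(\overline{u})v_k^2|\le\Vert\mathcal{J}''(w_k)-\mathcal{J}''(\overline{u})\Vert\to0$. The same uniformity is hidden in the paper's $o(\rho^2)$, so you only need weak lower semicontinuity of $\mathcal{J}''(\overline{u})$ itself. Second, the first half of your Step 1 (the identity involving $\overline{\zeta}$) is superseded by the upgrade through $j_1'(\overline{u})$.

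For the final claim, you apply Theorem \ref{theo_snc} to $F$, whose critical cone equals $\mathcal{C}_{\overline{u}}$ because $F'(\overline{u})=\mathcal{J}'(\overline{u})$. This is exactly what the paper asserts, with the justification made explicit.
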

	\begin{proof}
		Let $\varepsilon>0$ and $u\in B_\varepsilon(\overline{u}) \cap U_{a,b}$ be given. We define 
		\begin{equation*}
			\rho = \Vert u - \overline{u} \Vert_{L^2(0,T;\mathbb{L}^2(\Omega)) } \leq \varepsilon, \hspace{10pt} v=\frac{1}{\rho}(u-\overline{u}).
		\end{equation*}
		Using the difference $\rho$, we derive
		\begin{align*}
			\mathcal{J}_1 (u) - \mathcal{J}_1 (\overline{u}) &= \mathcal{J}(u) - \mathcal{J}(\overline{u}) + j_1 (u) - j_1 (\overline{u}) \\
			&= \mathcal{J}(u) - \mathcal{J}(\overline{u}) + j_1 (\overline{u}+\rho v) - j_1 (\overline{u}) \\
			&\geq \rho \left[ \mathcal{J}'(\overline{u})v + j_1'(\overline{u})v \right] + \frac{\rho^2}{2} \mathcal{J}''(\overline{u})v^2 + o(\rho^2) .
		\end{align*}
		In case $v \in \mathcal{C}_{\overline{u}}^\tau$, where $\tau$ is defined in Theorem \ref{theo_ssc}, it follows that
		\begin{equation} \label{5.3.5}
			\mathcal{J}_1(u) - \mathcal{J}_1(\overline{u}) \geq \frac{\rho^2}{2} \delta + o(\rho^2).
		\end{equation}
		For the remaining case $v \notin \mathcal{C}_{\overline{u}}^\tau$, one can find that $v$ satisfies the sign conditions \eqref{NUad_1} and \eqref{NUad_2}. Thus $\displaystyle \mathcal{J}'(\overline{u})v+\kappa j_1'(\overline{u})v > \tau$. This deduces that
		\begin{equation} \label{5.3.6}
			\mathcal{J}_1(u) - \mathcal{J}_1(\overline{u}) \geq \rho \tau - \frac{\rho^2}{2} \Vert \mathcal{J}''(\overline{u}) \Vert + o(\rho^2).
		\end{equation} 
		Moreover, since $g'_y(t,\overline{y})$ and $g''_{yy}(t,\overline{y})$ are linear and continuous operators, we get
		\begin{align*}
			\vert g'_y(t,\overline{y})\overline{m}_v \vert &\leq C(t,\overline{y}) \Vert \overline{m}_v \Vert_{W(0,T;H^3,V)},\\
			\vert g''_{yy}(t,\overline{y})\overline{z}_v^2 \vert &\leq C(t,\overline{y}) \Vert \overline{z}_v \Vert^2_{W(0,T;H^3,V)}.
		\end{align*}
		Combining with the fact that $\overline{u} \in U_{a, b}$, both \eqref{5.3.5} and \eqref{5.3.6} imply \eqref{grow_ineq} for sufficiently small $\varepsilon$, thereby completing the proof.
	\end{proof}

	\section{Stability of optimal control problem}
	In this section, we investigate the stability of the optimal solutions with respect to the sparsity parameter $\kappa$. From now on, we shall refer to the three sparse problems collectively as $(\textbf{P}_\kappa)$, unless a distinction is required. \\
	First we consider some properties of the non-sparse problem $(\textbf{P})$. The existence of at least one optimal solutions and the first-order necessary conditions are mentioned above by the authors. For the second-order sufficient optimality conditions, following arguments in \cite{AS2025} with some technical changes, we can establish two equivalent forms as follows
	\begin{equation*}
		(\textbf{SSC}_0)
		\begin{cases}
			\mathcal{J}''(\overline{u})v^2 > 0 \text{ holds for all }v \text{ satisfying conditions} \ \eqref{NUad_1}, \eqref{NUad_2} \ \text{such that } \\
			\mathcal{J}'(\overline{u})v=0.
		\end{cases}
	\end{equation*} 
	and
	\begin{align*}
		(\textbf{SSC})\begin{cases}
			\text{There exist $\eta>0$ and $\mu>0$ such that}\\
			\hspace{30pt} \mathcal{J}''(\overline{u})v^2 \geq \mu \| v \|^2_{L^2(Q)^3}\\
			\text { holds for all } v \ \text{satisfying} \ \eqref{NUad_1}, \eqref{NUad_2}
			\text{ and $\mathcal{J}'(\overline{u})v \leq \eta \|v\|_{L^2(Q)^3}$.}
		\end{cases}
	\end{align*}
	
	We continue to analyze the relationship between the solutions of $(\textbf{P})$ and $(\textbf{P}_{\kappa})$. Hereafter, we denote by $\overline{u}_\kappa$ and $\overline{u}$ the minimizers of problems $(\textbf{P}_\kappa)$ and $(\textbf{P})$, respectively.
	\begin{theorem}\label{theo6.1}
		Let $\{\overline{u}_\kappa\}_{\kappa>0}$ be a family of optimal solutions to problems $(\textbf{P}_\kappa)$. Then every control $\overline{u}$ which is the weak-$^*$ limit of a subsequence $\{\overline{u}_{\kappa_n}\}_{n\geq1} \subset \{u_\kappa\}_{\kappa>0}$ in $L^\infty(Q)^3$  is a minimizer of $(\textbf{P})$. Moreover, we obtain that $\overline{u}_{\kappa_n} \rightarrow \overline{u}$ in $C(\overline{Q})$.
	\end{theorem}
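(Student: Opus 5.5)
We argue by a standard $\Gamma$-convergence-type comparison, using optimality of $\overline{u}_{\kappa_n}$ against an arbitrary admissible control and letting $\kappa_n\to 0$. Here the limit $\kappa\to0$ is understood, so that $(\textbf{P})$ is the formal limit of $(\textbf{P}_\kappa)$.

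\emph{Step 1 (compactness and admissibility).} All $\overline{u}_\kappa$ lie in $U_{a,b}$, which is bounded in $L^\infty(Q)^3$ and weakly-$^*$ closed, so the limit $\overline{u}$ of $\overline{u}_{\kappa_n}\stackrel{*}{\rightharpoonup}\overline{u}$ is admissible. Weak-$^*$ convergence in $L^\infty(Q)^3$ implies weak convergence in $L^2(Q)^3$. By Lemma \ref{LipchitzS} the states $\overline{y}_{\kappa_n}=S(\overline{u}_{\kappa_n})$ are bounded in $W(0,T;H^3,V)$. Arguing exactly as in Step 2 of Theorem \ref{theo3.1}, with Lemma \ref{lem3.1} used to pass to the limit in the nonlinear term, we get $\overline{y}_{\kappa_n}\rightharpoonup \overline{y}=S(\overline{u})$ in $W(0,T;H^3,V)$. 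By compactness of $W(0,T;H^3,V)\hookrightarrow L^2(0,T;D(A))$ (and also into $C([0,T];V)$ via Aubin--Lions), the states converge strongly.

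\emph{Step 2 (optimality of the limit).} Fix $u\in U_{a,b}$. Optimality of $\overline{u}_{\kappa_n}$ gives
\[
\mathcal{J}(\overline{u}_{\kappa_n})\le \mathcal{J}(\overline{u}_{\kappa_n})+\kappa_n j_k(\overline{u}_{\kappa_n})\le \mathcal{J}(u)+\kappa_n j_k(u).
\]
Since $j_k(u)\le C(a,b,Q)$ on $U_{a,b}$, the right-hand side tends to $\mathcal{J}(u)$. On the left, the term $\int_0^T g(t,\overline{y}_{\kappa_n})\,dt$ converges to $\int_0^T g(t,\overline{y})\,dt$: this follows from strong convergence in $C([0,T];V)$ and the local Lipschitz property in (\textbf{A1}), with radius $r$ given by the uniform bound of the states. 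The control term is weakly lower semicontinuous. Hence $\mathcal{J}(\overline{u})\le\liminf\mathcal{J}(\overline{u}_{\kappa_n})\le \mathcal{J}(u)$, so $\overline{u}$ minimizes $(\textbf{P})$. Taking $u=\overline{u}$ also yields $\lim \mathcal{J}(\overline{u}_{\kappa_n})=\mathcal{J}(\overline{u})$. Since the $g$-part converges, $\|\overline{u}_{\kappa_n}\|_{L^2}\to\|\overline{u}\|_{L^2}$. Together with weak convergence this gives strong convergence $\overline{u}_{\kappa_n}\to\overline{u}$ in $L^2(Q)^3$, and then in every $L^p$, $p<\infty$, by the uniform $L^\infty$ bound.

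\emph{Step 3 (uniform convergence, the main obstacle).} We use the projection formulas \eqref{j1_2}, \eqref{j2_4}, \eqref{j3_3}, together with the limit formula $\overline{u}_i=\text{Proj}_{[a_i,b_i]}(-\overline{\lambda}_i/\gamma)$ from \eqref{4.3}. Since $\text{Proj}$ is $1$-Lipschitz and $|\kappa\overline{\zeta}_\kappa|\le \kappa\,\overline{\sigma}$ (with $\overline{\sigma}\le1$ for $j_1,j_3$, and controlled through $\overline{\sigma}_i(t)$ for $j_2$),
\[
|\overline{u}_{\kappa_n,i}-\overline{u}_i|\le \tfrac1\gamma\big(|\overline{\lambda}_{\kappa_n,i}-\overline{\lambda}_i|+\kappa_n|\overline{\zeta}_{\kappa_n,i}|\big).
\]
It therefore suffices to show $\overline{\lambda}_{\kappa_n}\to\overline{\lambda}$ in $C(\overline{Q})$. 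The adjoint difference solves a linear equation of the type \eqref{adj_eqt}, with right-hand side
\[
g'_y(t,\overline{y}_{\kappa_n})-g'_y(t,\overline{y})+\big[\widetilde{B}'(\overline{y},\cdot)^*-\widetilde{B}'(\overline{y}_{\kappa_n},\cdot)^*\big]\overline{\lambda}_{\kappa_n}.
\]
The energy estimate \eqref{bound_estimate_adjsol} bounds the difference in $L^\infty(0,T;D(A))\cap L^2(0,T;\mathbb{H}^3_\sigma)$, and it tends to zero by Step 1 and continuity of $g'_y$. Lemma \ref{adj_wellposedness} gives $\overline{\lambda}_\kappa$ and $\partial_t\overline{\lambda}_\kappa$ uniformly bounded in $W(0,T;H^3,V)$. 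Interpolation then gives $W(0,T;H^3,V)\hookrightarrow C([0,T];\mathbb{H}^{2})\subset C(\overline{Q})$ in three dimensions. Controlling the nonlinear adjoint term uniformly, and the time-dependent weight $\overline{\sigma}$ in the $j_2$ case (where only a weighted bound holds), is where the most care is needed.
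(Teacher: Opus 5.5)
Your proof is correct for $j_1$ and follows the paper's skeleton: weak-$^*$ compactness, then the chain $\mathcal J(\overline u)\le\liminf\mathcal J_{\kappa_n}(\overline u_{\kappa_n})\le\mathcal J(u)$, then the projection formulas, which reduce everything to $\overline\lambda_{\kappa_n}\to\overline\lambda$ in $C(\overline Q)$. Two steps run differently.

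First, the paper obtains $\overline u_i=\mathrm{Proj}_{[a_i,b_i]}(-\overline\lambda_i/\gamma)$ by passing to the limit in the first-order systems of $(\textbf{P}_{\kappa_n})$. You read it off Theorem \ref{theo4.1} at the limit minimizer, which is simpler.

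Second, and more substantially, the paper only asserts $\overline\lambda_{\kappa_n}\rightharpoonup\overline\lambda$ in $W(0,T;H^3,V)$ and concludes from the \emph{compact} embedding $W(0,T;H^3,V)\hookrightarrow C(\overline Q)$. You instead estimate the linear equation satisfied by $\overline\lambda_{\kappa_n}-\overline\lambda$, which needs only the continuous embedding $D(A)\hookrightarrow C(\overline\Omega)$. The paper's route is shorter and purely qualitative: uniform bounds plus identification of the weak limit. Yours gives an explicit stability bound for the adjoint in terms of $g'_y(\cdot,\overline y_{\kappa_n})-g'_y(\cdot,\overline y)$ and $\overline y_{\kappa_n}-\overline y$.

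Your Step 2 makes this easy. Upgrading to strong $L^2(Q)^3$ convergence of the controls via convergence of norms is something the paper never does. Combined with Lemma \ref{LipchitzS}, it gives $\overline y_{\kappa_n}\to\overline y$ strongly in $W(0,T;H^3,V)$. The cross terms $\widetilde b(\overline y_{\kappa_n}-\overline y,\cdot,\overline\lambda_{\kappa_n})$ you single out are then controlled by the uniform $L^\infty(0,T;D(A))$ bound on $\overline\lambda_{\kappa_n}$.

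One assertion in your Step 3 is false: for $j_3$ you do not have $|\overline\zeta_\kappa|\le 1$ pointwise. The subgradient only satisfies $\|\overline\zeta_\kappa(x)\|_{L^2(0,T)^3}\le1$. Where $\overline u_\kappa(x)\neq0$, one has $\overline\zeta_\kappa(x,t)=\overline u_\kappa(x,t)/\|\overline u_\kappa(x)\|_{L^2(0,T)^3}$. This is pointwise unbounded when $\|\overline u_\kappa(x)\|_{L^2(0,T)^3}$ is small, e.g.\ if $\overline u_\kappa(x,\cdot)$ is concentrated on a short time interval. Hence $\kappa_n\|\overline\zeta_{\kappa_n}\|_{L^\infty(Q)^3}\to0$ does not follow, and $j_3$ has the same difficulty you flag for $j_2$. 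The paper's remark after the theorem does not resolve it either.

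For $j_2$, your Step 2 closes the gap.
If $\overline u_i\not\equiv0$, strong convergence keeps $\|\overline u_{\kappa_n,i}\|_{L^2(0,T;L^1(\Omega))}$ away from $0$, so $\kappa_n\overline\sigma_{\kappa_n,i}\le C\kappa_n$ uniformly.
If $\overline u_i\equiv0$, note that $-(\overline\lambda_{\kappa_n,i}+\kappa_n\overline\zeta_{\kappa_n,i})$ is a soft-thresholding of $-\overline\lambda_{\kappa_n,i}$: it has the same sign and no larger modulus. Together with $\mathrm{Proj}_{[a_i,b_i]}(-\overline\lambda_i/\gamma)=0$, this gives $|\overline u_{\kappa_n,i}|\le\gamma^{-1}|\overline\lambda_{\kappa_n,i}-\overline\lambda_i|$.

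For $j_3$ you need an analogous argument. One option is to write $\overline u_{\kappa}(x,\cdot)=\mathrm{Proj}_{[a,b]}(-\overline\lambda_{\kappa}(x,\cdot)/(\gamma+\kappa/\|\overline u_\kappa(x)\|_{L^2(0,T)^3}))$. Then use the uniform convergence $\overline\lambda_{\kappa_n}\to\overline\lambda$ and the uniform continuity of $\overline\lambda$ in $t$ to bound $\|\overline u_{\kappa_n}(x)\|_{L^2(0,T)^3}$ from below wherever $\overline u(x,\cdot)$ is not uniformly small.
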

	\begin{proof}
		The existence of  a weakly-$^*$ convergent subsequence $\{\overline{u}_{\kappa_n}\}_{n=1}^{\infty} \subset \{\overline{u}_{\kappa}\}_{\kappa>0}$ is given by the boundedness of $U_{\alpha,\beta}$ in $L^\infty(Q)^3$. Denote by $\overline{y}_{{\kappa_n}}, \overline{y}$ the associated states to $\overline{u}_{\kappa_n}$ and $\overline{u}$, respectively. Using the uniform boundedness of $\{\overline{y}_{{\kappa_n}}\}$ in $C(0,T;D(A))$, we can extract a subsequence $\{\overline{u}_{\kappa_n}\}_{n=1}^{\infty}$ of $\{\overline{u}_\kappa\}$ such that $\{\overline{y}_{{\kappa_n}}\}_{n=1}^{\infty}$ converge weakly to some $\overline{y}$ in $L^\infty(0,T;V)$. Then we can deduce that $\overline{y}=S(\overline{u})$.    
		Take an arbitrary $u \in U_{a,b}$. We find
		\begin{equation*}
			\mathcal{J}(\overline{u}) \leq \liminf_{n \to \infty} \mathcal{J}(\overline{u}_{\kappa_n}) \leq \liminf_{n \to \infty} \mathcal{J}_{\kappa_n} (\overline{u}_{\kappa_n}) \leq \liminf_{n \to \infty} \mathcal{J}_{\kappa_n} (u) = \mathcal{J}(u).
		\end{equation*}
		These inequalities holds for all $u \in U_{a,b}$, which yields the globally optimal of $\overline{u}$.
		Next, we prove that $\left\vert \overline{u}_{\kappa_n} - \overline{u} \right\vert \rightarrow 0$ as $n \rightarrow \infty$. We consider firstly the sparsity term $j_1$.\\
		Since $\overline{u}_{\kappa_n}$ is an optimal solution of problem $(\textbf{P}_\kappa)$, there exist $\overline{\lambda}_{{\kappa_n}} \in W(0,T;H^3,V)$ and $\overline{\zeta}_{\kappa_n}\in \partial j_1(\overline{u}_{\kappa_n})$ such that
		\begin{align}
			\overline{u}_{\kappa_n,i}(x,t) &= \text{Proj}_{[a_i, b_i]} \left\{ -\frac{1}{\gamma} (\overline{\lambda}_{\kappa_n,i}(x,t) + \kappa_n \overline{\zeta}_{\kappa_n,i}(x,t)) \right\}, \label{6.1.1}\\
			\overline{u}_{\kappa_n,i}(x,t) &\Leftrightarrow \vert \overline{\lambda}_{\kappa_n,i}(x,t) \vert \leq \kappa_n, \nonumber \\
			\overline{\zeta}_{\kappa_n,i}(x,t) &=\text{Proj}_{[-1;1]} \left( -\frac{1}{\kappa} \overline{\lambda}_{\kappa_n,i}(x,t) \right) \nonumber.
		\end{align}
		Moreover, the first-order necessary condition with $\overline{u}_{\kappa_n}$ yields
		\begin{equation} \label{6.1.2}
			\begin{aligned}
				\iint_Q u(\overline{\lambda}_{\kappa_n} &+\gamma \overline{u}_{\kappa_n}+ \kappa_n \overline{\zeta}_{\kappa_n}) \, dxdt \\
				&\geq \iint_Q \overline{\lambda}_{\kappa_n} \overline{u}_{\kappa_n} \, dxdt + \gamma \iint_Q \vert \overline{u}_{\kappa_n} \vert^2 dxdt + \iint_Q \kappa_n \overline{u}_{\kappa_n} \overline{\zeta}_{\kappa_n} \, dxdt.
			\end{aligned}
		\end{equation}
		Let $\overline{\lambda}$ be the adjoint state to $\overline{y}$. Then we have that $\overline{\lambda}_{\kappa_n}$ converges weakly to $\overline{\lambda}$ in $W(0,T;H^3,V)$. Thus letting $n\rightarrow \infty$ in \eqref{6.1.2}, together with using the properties of $\overline{\zeta}_{\kappa_n}$ lead to
		\begin{equation*}
			\iint_Q (\overline{\lambda}+ \gamma \overline{u})(u- \overline{u}) \, dxdt \geq 0 \ \text{for all} \ u \in U_{a,b},
		\end{equation*}
		or equivalently,
		\begin{equation} \label{6.1.3}
			\overline{u}_i(x,t) = \text{Proj}_{[a_i,b_i]} \left( -\frac{1}{\gamma} \overline{\lambda}_i(x,t) \right).
		\end{equation}
		Combining \eqref{6.1.1} and \eqref{6.1.3} implies
		\begin{equation*}
			\begin{aligned}
				\vert \overline{u}_{\kappa_n,i}(x,t) - \overline{u}_i(x,t) \vert &\leq \frac{1}{\gamma}\left( \vert \overline{\lambda}_{\kappa_n,i}(x,t) - \overline{\lambda}(x,t) \vert + \kappa_n \vert \overline{\zeta}_{\kappa_n,i} (x,t) \vert \right) \\
				&\leq \frac{1}{\gamma} \left( \Vert \overline{\lambda}_{\kappa_n,i}(x,t) - \overline{\lambda}_i(x,t) \Vert_{C(\overline{Q})} + \kappa_n \right).
			\end{aligned}
		\end{equation*}
		Since $\mathbb{H}^2(\Omega) \hookrightarrow C(\overline{\Omega})$, we have that $W(0,T;H^3,V)$ is compactly embedded in $C(\overline{Q})$. Thus $\overline{\lambda}_{\kappa_n}$ strongly converges to $\overline{\lambda}$ in $C(\overline{Q})$ and we get the final assertion.
	\end{proof}
	\begin{remark}
		\rm{ If the function $j_1$ is replaced by either $j_2$ or $j_3$, our conclusion remains unchanged thanks to the fact that $\overline{\zeta}_{\kappa_n}$ possesses a constant value with each $\overline{u}_{\kappa_n}$ and $\kappa_n \rightarrow0$.}
	\end{remark}
	
	\begin{theorem}\label{theo6.2}
		Let $\overline{u}$ be a strict local minimizer of $(\textbf{P})$ in the $L^2$ - sense, meaning that there exists $\varepsilon>0$ such that
		\[\mathcal{J}(\overline{u}) < \mathcal{J}(u) \ \forall u \in U_{a,b} \cap \overline{B}_{L^2(Q)^3}(\overline{u}, \varepsilon) \setminus\{ \overline{u} \}.\]
		Then there exists a set $\{\overline{u}_{\kappa}\}_{\kappa>0}$ of local minimizers of $(\textbf{P}_{\kappa})$ such that $\overline{u}_\kappa \to \overline{u}$ in $L^2(Q)^3$ as $\kappa \to 0$.
	\end{theorem}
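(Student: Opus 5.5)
We use the standard localization argument of Casas--Tröltzsch type. Fix the radius $\varepsilon>0$ from the strict local optimality of $\overline{u}$. For every $\kappa>0$ we consider the auxiliary localized problem
\begin{equation*}
(\textbf{P}_{\kappa,\varepsilon}) \qquad \min \mathcal{J}_\kappa(u)=\mathcal{J}(u)+\kappa j(u) \quad \text{subject to } u\in U_{a,b}\cap \overline{B}_{L^2(Q)^3}(\overline{u},\varepsilon).
\end{equation*}
Its admissible set is nonempty (it contains $\overline{u}$), convex, closed and bounded in $L^2(Q)^3$, hence weakly compact. The first step is to show that $(\textbf{P}_{\kappa,\varepsilon})$ has a global solution $\overline{u}_\kappa$. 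For this we repeat verbatim the three steps in the proof of Theorem \ref{theo3.1}: take a minimizing sequence, extract a weakly convergent subsequence in $L^2(Q)^3$ whose states converge weakly in $W(0,T;H^3,V)$, use Lemma \ref{lem3.1} to identify the limit state, and use weak lower semicontinuity of $\int_0^T g(t,\cdot)\,dt$, of the $L^2$ term and of $j_k$.

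The second step is to prove $\overline{u}_\kappa\to\overline{u}$ in $L^2(Q)^3$ as $\kappa\to0$. Take any sequence $\kappa_n\to0$. Since $\{\overline{u}_{\kappa_n}\}$ lies in the bounded set above, a subsequence converges weakly to some $\tilde u\in U_{a,b}\cap\overline{B}(\overline{u},\varepsilon)$. As in Theorem \ref{theo6.1}, $S(\overline{u}_{\kappa_n})\rightharpoonup S(\tilde u)$ in $W(0,T;H^3,V)$, and the compact embedding into $L^2(0,T;D(A))$, together with the Lipschitz property of $g$ on bounded sets, gives convergence of the $g$-term. Using $j\ge0$ and optimality of $\overline{u}_{\kappa_n}$ against the admissible competitor $\overline{u}$, we get
\begin{equation*}
\mathcal{J}(\tilde u)\le\liminf_{n}\mathcal{J}(\overline{u}_{\kappa_n})\le\limsup_n \mathcal{J}_{\kappa_n}(\overline{u}_{\kappa_n})\le\lim_n\big(\mathcal{J}(\overline{u})+\kappa_n j(\overline{u})\big)=\mathcal{J}(\overline{u}).
\end{equation*}
Strict local optimality then forces $\tilde u=\overline{u}$. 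The same chain shows $\mathcal{J}(\overline{u}_{\kappa_n})\to\mathcal{J}(\overline{u})$. Since the state part $\int_0^T g(t,\overline{y}_{\kappa_n})\,dt$ converges to $\int_0^T g(t,\overline{y})\,dt$, this yields $\|\overline{u}_{\kappa_n}\|_{L^2}\to\|\overline{u}\|_{L^2}$; here $\gamma>0$ is essential. Weak convergence plus convergence of norms gives strong convergence in $L^2(Q)^3$. Since every sequence has a subsequence with the same limit, the whole family converges.

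The final step is to deduce local optimality for $(\textbf{P}_\kappa)$ itself. By the second step there is $\kappa_0>0$ with $\|\overline{u}_\kappa-\overline{u}\|_{L^2}<\varepsilon/2$ for $\kappa<\kappa_0$. Then $\overline{B}(\overline{u}_\kappa,\varepsilon/2)\subset\overline{B}(\overline{u},\varepsilon)$, so $\overline{u}_\kappa$ minimizes $\mathcal{J}_\kappa$ over $U_{a,b}\cap\overline{B}(\overline{u}_\kappa,\varepsilon/2)$. That is, $\overline{u}_\kappa$ is a local minimizer of $(\textbf{P}_\kappa)$ in the sense of Definition \ref{def4.1} with $\rho=\varepsilon/2$. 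For $\kappa\ge\kappa_0$ we take any local minimizer, e.g. the global one from Theorem \ref{theo3.1}; this does not affect the limit statement.

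The main obstacle is the passage to the limit in the $g$-term. We need genuine convergence, not merely lower semicontinuity, to upgrade weak to strong convergence of the controls. This requires that the weakly convergent states converge strongly enough in $C([0,T];V)$ or $L^2(0,T;V)$. We will obtain this from the Aubin--Lions compactness of $W(0,T;H^3,V)$ combined with the local Lipschitz assumption (\textbf{A1}) and the uniform bounds of Theorem \ref{existence_uniqueness}.
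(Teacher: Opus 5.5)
Your argument is correct, but it upgrades weak to strong convergence by a different mechanism than the paper. The paper identifies the weak-$*$ limit as $\overline{u}$ just as you do. It then writes the first-order conditions on $U_\varepsilon=U_{a,b}\cap\overline{B}_{L^2(Q)^3}(\overline{u},\varepsilon)$ for both $\overline{u}$ and $\overline{u}_{\kappa_n}$ as projection formulas, $\overline{u}=\mathrm{Proj}_{U_\varepsilon}(-\overline{\lambda}/\gamma)$ and $\overline{u}_{\kappa_n}=\mathrm{Proj}_{U_\varepsilon}(-(\overline{\lambda}_{\kappa_n}+\kappa_n\overline{\zeta}_{\kappa_n})/\gamma)$. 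Nonexpansiveness of the projection then gives $\|\overline{u}_{\kappa_n}-\overline{u}\|_{L^2(Q)^3}\le\gamma^{-1}\bigl(\|\overline{\lambda}_{\kappa_n}-\overline{\lambda}\|_{L^2(Q)^3}+\kappa_n\|\overline{\zeta}_{\kappa_n}\|_{L^2(Q)^3}\bigr)$, which vanishes by strong convergence of the adjoint states and boundedness of the subgradients.

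You instead squeeze $\mathcal{J}(\overline{u}_{\kappa_n})\to\mathcal{J}(\overline{u})$ out of the optimality chain and use $\gamma>0$ to get $\|\overline{u}_{\kappa_n}\|_{L^2(Q)^3}\to\|\overline{u}\|_{L^2(Q)^3}$. Your route needs no adjoint equation, no subgradients and no continuity of $u\mapsto\overline{\lambda}$, which the paper uses without a separate argument. It also applies verbatim to $j_1,j_2,j_3$. The paper's route instead yields an explicit bound linking the error to adjoint and subgradient quantities, in the spirit of the projection formulas of Section 4.

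You also make explicit a step the paper leaves out: for small $\kappa$ the ball constraint is inactive, so $\overline{u}_\kappa$ is a local minimizer of $(\textbf{P}_\kappa)$ itself and not merely of the localized problem.

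Finally, what you call the main obstacle is not really one. With $G(y)=\int_0^T g(t,y(t))\,dt$ you have $\limsup_n\frac{\gamma}{2}\|\overline{u}_{\kappa_n}\|^2_{L^2(Q)^3}\le\limsup_n\mathcal{J}(\overline{u}_{\kappa_n})-\liminf_n G(\overline{y}_{\kappa_n})\le\mathcal{J}(\overline{u})-G(\overline{y})=\frac{\gamma}{2}\|\overline{u}\|^2_{L^2(Q)^3}$. So weak lower semicontinuity of the $g$-term already suffices; your compactness-plus-(\textbf{A1}) argument is valid but stronger than needed.
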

	\begin{proof}
		For a given $\kappa>0$, we consider the problem
		\begin{align*}
			(\mathcal{P}_{\kappa}^{\varepsilon}): \min_{u \in U_{a,b}\cap\overline{B}_{L^2(Q)^3}(\overline{u}, \varepsilon)}  \mathcal{J}_{\kappa} (u)=  \int_0^T [g(t,y_u(t))+ |u(t)|^2] \,dt + \kappa \iint_Q \vert u(x,t) \vert \, dxdt.
		\end{align*}
		The problem $(\mathcal{P}_{\kappa}^{\varepsilon})$ has at least an optimal solution $\overline{u}_\kappa$ for every $\kappa>0$. 
		Set $U_\varepsilon=U_{a,b}\cap\overline{B}_{L^2(Q)^3}(\overline{u}, \varepsilon)$, we also have that $U_\varepsilon$ is bounded in $L^\infty(Q)^3$. Thus we can extract from $\{\overline{u}_\kappa\}$ a subsequence $\{\overline{u}_{\kappa_n} \} _{n=1}^\infty$ that weakly-$^*$ converges to $\widetilde{u}\in U_\varepsilon$. Let $\widetilde{y}$ be the associated state to $\widetilde{u}$. Then
		\begin{equation*}
			\mathcal{J}(\widetilde{u}) \leq \liminf_{n \to \infty} \mathcal{J}(\overline{u}_{\kappa_n}) \leq \liminf_{n \to \infty} \mathcal{J}_{\kappa_n} (\overline{u}_{\kappa_n}) \leq \liminf_{n \to \infty} \mathcal{J}_{\kappa_n} (u) = \mathcal{J}(u).
		\end{equation*}
		for any $u\in U_\varepsilon$, which implies that $\widetilde{u}$ is a local solution to $(\textbf{P})$. Thus $\widetilde{u}$ is a strict local solution and $\widetilde{u}\equiv\overline{u}$. Consequently, there exists $\overline{\lambda}\in W(0,T;H^3,V)$ satisfying the adjoint equations \eqref{adj_eqt} and the following variational inequality
		\begin{equation} \label{6.2.1}
			\iint_Q (\overline{\lambda}+ \gamma \overline{u})(u-\overline{u}) \, dxdt \geq 0, \forall u\in U_\varepsilon.
		\end{equation}
		Furthermore, since $\overline{u}_{\kappa_n}$ is an optimal solution of $(\textbf{P}_\kappa ^\varepsilon)$, we can find $\overline{\zeta}_{\kappa_n}\in \partial j_1(\overline{u})$ and $\overline{\lambda}_{\kappa_n}$ such that
		\begin{equation} \label{6.2.2}
			\iint_Q(\overline{\lambda}_{\kappa_n} + \gamma \overline{u}_{\kappa_n} + \kappa_n \overline{\zeta}_{\kappa_n})(u-\overline{u}_{\kappa_n}) \geq 0, \forall u \in U_\varepsilon.
		\end{equation}
		By the Hilbert projection theorem, \eqref{6.2.1} and \eqref{6.2.2} yield
		\begin{align*}
			\overline{u}&= \text{Proj}_{U_\varepsilon}\left( - \frac{1}{\gamma}\overline{\lambda} \right), \\
			\overline{u}_{\kappa_n} &= \text{Proj}_{U_\varepsilon}\left( -\frac{1}{\gamma} (\overline{\lambda}_{\kappa_n} + \kappa_n \overline{\zeta}_{\kappa_n}) \right).
		\end{align*}
		Hence, 
		\[ \Vert \overline{u}_{\kappa_n} - \overline{u} \Vert_{L^2(Q)^3} \leq \frac{1}{\gamma} \left( \Vert \overline{\lambda}_{\kappa_n} - \overline{\lambda} \Vert_{L^2(Q)^3} + \kappa_n \Vert \overline{\zeta}_{\kappa_n} \Vert_{L^2(Q)^3} \right). \]
		Taking the limit of both sides as $n\rightarrow\infty$, we obtain the desired $L^2$-convergence.
	\end{proof}
	\begin{remark}
		\rm{Analogously to Theorem \ref{theo6.1}, the proofs of Theorem \ref{theo6.1} in case of $j_2$ and $j_3$ only differ from that for $j_1$ only in the definition of $(\mathcal{P}_\kappa^\varepsilon)$.}
	\end{remark}
	
	Let $\overline{u}$ be a strict local optimal solution to problem $(\textbf{P})$. Theorem \ref{theo6.2} ensures the existence of a sequence $\{ \overline{u}_\kappa \}_{\kappa>0}$ of local optimal solutions of the sparse problems $(\textbf{P}_\kappa)$ that converges strongly to $\overline{u}$ in $L^2(Q)^3$. The following theorem investigates the Lipchitz and H\"older stability of the control systems with respect to $j_1,j_2$ and $j_3$. Note that we need not any other conditions of $\overline{u}$ and $\overline{u}_\kappa$ besides the mentioned optimality conditions.
	
	\begin{theorem}\label{theo6.3}
		With the notations above, there exist constants $\varepsilon>0$ and $C>0$ independent of $\overline{u}$ such that it holds
		\begin{enumerate}
			\item Lipchitz stability for $(\textbf{P}_1)$ and $(\textbf{P}_3)$: 
			\begin{equation} \label{lipz_sta_j13}
				\left\Vert \overline{u}_\kappa - \overline{u} \right\Vert_{L^2(Q)^3} \leq C \kappa \ \text{for all} \ 0<\kappa <\varepsilon.
			\end{equation}
			\item H\"older stability for $(\textbf{P}_2)$: 
			\begin{equation} \label{lipz_sta_j2}
				\left\Vert \overline{u}_\kappa - \overline{u} \right\Vert_{L^2(Q)^3} \leq C \kappa^{2/3} \ \text{for all} \ 0<\kappa <\varepsilon.
			\end{equation}
		\end{enumerate}
	\end{theorem}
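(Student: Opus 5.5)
Throughout I assume that $\overline{u}$ satisfies the second-order sufficient condition (\textbf{SSC}) for $(\textbf{P})$ with constants $\eta,\mu$. This is the standing hypothesis behind ``the notations above'', and without some such growth condition no rate can be expected. The strategy is the standard one for stability with respect to a parameter. I combine the first-order conditions of $(\textbf{P}_\kappa)$ at $\overline{u}_\kappa$ and of $(\textbf{P})$ at $\overline{u}$, test each with the other control, and add. The sparse term then shows up only as $\kappa\iint_Q \overline{\zeta}_\kappa(\overline{u}-\overline{u}_\kappa)$. Concretely, I test \eqref{var_ineq_j1} for $\overline{u}_\kappa$ (with multiplier $\overline{\zeta}_\kappa\in\partial j_k(\overline{u}_\kappa)$) at $u=\overline{u}$, test \eqref{4.3} for $\overline{u}$ at $u=\overline{u}_\kappa$, and sum. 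This gives
\begin{equation*}
\big(\mathcal{J}'(\overline{u}_\kappa)-\mathcal{J}'(\overline{u})\big)(\overline{u}_\kappa-\overline{u}) \le \kappa \iint_Q \overline{\zeta}_\kappa(\overline{u}-\overline{u}_\kappa)\,dxdt .
\end{equation*}

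For the left-hand side I use the mean value theorem, which writes it as $\mathcal{J}''(\hat u)(\overline{u}_\kappa-\overline{u})^2$ with $\hat u$ on the segment between the two controls. Since $\mathcal{J}$ is $C^2$ (Theorems \ref{1stS}, \ref{2ndS}), I replace $\mathcal{J}''(\hat u)$ by $\mathcal{J}''(\overline{u})$ at the cost of $o(1)\|\overline{u}_\kappa-\overline{u}\|^2$; this uses Theorem \ref{theo6.2}, i.e.\ $\overline{u}_\kappa\to\overline{u}$ in $L^2$. To apply (\textbf{SSC}) I need $v=\overline{u}_\kappa-\overline{u}$ to satisfy the sign conditions \eqref{NUad_1}--\eqref{NUad_2}, which holds automatically because $\overline{u}_\kappa\in U_{a,b}$. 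I then split into two cases. If $\mathcal{J}'(\overline{u})v\le \eta\|v\|$, then (\textbf{SSC}) gives $\mathcal{J}''(\overline{u})v^2\ge\mu\|v\|^2$, so for small $\kappa$ the left-hand side is at least $\frac{\mu}{2}\|v\|^2$. In the other case, $\mathcal{J}'(\overline{u})v>\eta\|v\|$. There I go back to the variational inequality for $\overline{u}_\kappa$ alone, which gives $\mathcal{J}'(\overline{u}_\kappa)v\le \kappa\iint\overline{\zeta}_\kappa(\overline{u}-\overline{u}_\kappa)$. Combining this with $\mathcal{J}'(\overline{u}_\kappa)v=\mathcal{J}'(\overline{u})v+O(\|v\|^2)$ yields $\eta\|v\|\le \kappa\|\overline{\zeta}_\kappa\|\,\|v\|+C\|v\|^2$, which is impossible for $\kappa$ and $\|v\|$ small. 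So only the first case occurs, and we are left with
\begin{equation*}
\tfrac{\mu}{2}\|\overline{u}_\kappa-\overline{u}\|_{L^2(Q)^3}^2\le \kappa\iint_Q \overline{\zeta}_\kappa(\overline{u}-\overline{u}_\kappa)\,dxdt .
\end{equation*}

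The rate now depends entirely on how large $\overline{\zeta}_\kappa$ is in the dual norm that pairs with $L^2$.

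For $j_1$, $|\overline{\zeta}_{\kappa,i}|\le1$ by \eqref{j1_1}, so $\|\overline{\zeta}_\kappa\|_{L^2(Q)^3}\le \sqrt{3|Q|}$. Cauchy--Schwarz then gives $\|v\|\le C\kappa$.

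For $j_3$, \eqref{j3_1} gives $\|\overline{\zeta}_\kappa(x)\|_{L^2(0,T)^3}\le1$ for a.e.\ $x$, so again $\|\overline{\zeta}_\kappa\|_{L^2(Q)^3}\le|\Omega|^{1/2}$, and the Lipschitz estimate \eqref{lipz_sta_j13} follows.

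The main obstacle is $j_2$. Here $\overline{\zeta}_\kappa(t)$ has $L^\infty(\Omega)$-norm $\|\overline{u}_\kappa(t)\|_{L^1}/\|\overline{u}_\kappa\|_{L^2(L^1)}$, which is not uniformly bounded: it degenerates as $\overline{u}_\kappa\to0$. The convenient bound is instead $\|\overline{\zeta}_\kappa\|_{L^2(0,T;L^\infty)}\le1$, which pairs only with $L^2(0,T;L^1)$. So I will estimate
\begin{equation*}
\kappa\iint_Q\overline{\zeta}_\kappa(\overline{u}-\overline{u}_\kappa)\le \kappa\|v\|_{L^2(0,T;L^1)}
\end{equation*}
and pass back to $L^2(Q)$ by interpolation, exploiting the uniform $L^\infty$ bound $\|v\|_{L^\infty(Q)}\le \max_i(b_i-a_i)$ coming from the box constraints. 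The first thing I would try is an interpolation of the form $\|v\|_{L^2(L^1)}\lesssim \|v\|_{L^2(Q)}^{1/2}$, obtained by combining the elementary inequality $\|v\|_{L^2}^2\le\|v\|_{L^\infty}\|v\|_{L^1}$ with Hölder in time. Inserting it gives $\|v\|^2\lesssim\kappa\|v\|^{1/2}$, i.e.\ $\|v\|\lesssim\kappa^{2/3}$, which is precisely \eqref{lipz_sta_j2}. Making this exponent bookkeeping rigorous, and making sure all constants stay independent of $\kappa$ (through the uniform bounds on $\overline{\lambda}_\kappa$ and the convergence from Theorem \ref{theo6.2}), is the step I expect to require the most care.
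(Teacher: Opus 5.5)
Your argument is correct and proves the statement as written, but it takes a different route from the paper.

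\textbf{The paper's route.} The paper never adds two variational inequalities. It compares function values, $\mathcal{J}_\kappa(\overline{u}_\kappa)\le\mathcal{J}_\kappa(\overline{u})$, which is legitimate because $\overline{u}$ is feasible for the localized problem of Theorem \ref{theo6.2}. A Taylor expansion at $\overline{u}$, the variational inequality \eqref{4.3} and Lemma \ref{lem6.3} then give $\mathcal{J}(\overline{u}_\kappa)-\mathcal{J}(\overline{u})\ge\frac{\mu}{4}\|\overline{u}_\kappa-\overline{u}\|^2_{L^2(Q)^3}$. Finally the paper estimates $j_k(\overline{u})-j_k(\overline{u}_\kappa)$ by hand, separately for each $k$.

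\textbf{Your route.} You use the optimality system of $(\textbf{P}_\kappa)$ at $\overline{u}_\kappa$ together with the one for $(\textbf{P})$ at $\overline{u}$. The sparse term then enters only through $\kappa\iint_Q\overline{\zeta}_\kappa(\overline{u}-\overline{u}_\kappa)$, so all three cases reduce to one dual-norm bound on $\partial j_k$. Your direct perturbation check that $v=\overline{u}_\kappa-\overline{u}$ satisfies $\mathcal{J}'(\overline{u})v\le\eta\|v\|$ replaces the weak-compactness argument of Lemma \ref{lem6.3}, and it is cleaner.

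\textbf{What each buys.} Your route is uniform and robust. The paper's main estimate needs only value comparison plus first-order information at $\overline{u}$, and no structure of $\partial j_k$. The price is the case-by-case estimates of $j_k(\overline{u})-j_k(\overline{u}_\kappa)$, and that is where its argument is fragile. In the $j_3$ case it bounds $\|\overline{u}(x)\|_{L^2(0,T)^3}+\|\overline{u}_\kappa(x)\|_{L^2(0,T)^3}$ from below by $2a\sqrt{T}$. That is unjustified, since $a_i\le 0$ and the sum can vanish. Your bound $\|\overline{\zeta}_\kappa(x)\|_{L^2(0,T)^3}\le 1$ needs nothing of the kind.

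\textbf{The $j_2$ case.} Your ``main obstacle'' is not real, and this also affects your case analysis. Because $|\Omega|<\infty$, you have $\|\overline{\zeta}_\kappa\|_{L^2(Q)^3}\le|\Omega|^{1/2}\|\overline{\zeta}_\kappa\|_{L^2(0,T;\mathbb{L}^\infty(\Omega))}\le C|\Omega|^{1/2}$. This is exactly parallel to your $j_3$ argument. Equivalently, $\|v\|_{L^2(0,T;\mathbb{L}^1(\Omega))}\le|\Omega|^{1/2}\|v\|_{L^2(Q)^3}$ by Cauchy--Schwarz in $x$ for each $t$. The ratio $\|\overline{u}_\kappa(t)\|_{L^1}/\|\overline{u}_\kappa\|_{L^2(0,T;L^1)}$ being unbounded pointwise in $t$ is irrelevant.

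Your case analysis for $j_2$ in fact needs this bound, because there you use $\kappa\|\overline{\zeta}_\kappa\|\to 0$. With only your interpolation bound $\kappa\|v\|^{1/2}$, the contradiction fails when $\|v\|\lesssim\kappa^2$. That sub-case is harmless, but you would have to treat it separately.

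Inserted into your final inequality, the bound gives $\|\overline{u}_\kappa-\overline{u}\|_{L^2(Q)^3}\le C\kappa$ for $(\textbf{P}_2)$ as well. Your interpolation $\|v\|_{L^2(0,T;\mathbb{L}^1)}\lesssim\|v\|_{L^2(Q)^3}^{1/2}$ is valid, so you do establish the stated $\kappa^{2/3}$. However, it discards half a power of $\|v\|$ and merely reproduces the paper's non-sharp exponent. In the paper's route the same improvement comes in one line from the triangle inequality $j_2(\overline{u})-j_2(\overline{u}_\kappa)\le j_2(\overline{u}-\overline{u}_\kappa)\le|\Omega|^{1/2}\|\overline{u}-\overline{u}_\kappa\|_{L^2(Q)^3}$, and likewise for $j_1$ and $j_3$.
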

	
	Before proving this theorem, we establish the following auxiliary result.

	\begin{lemma}\label{lem6.3}
		There exists $\varepsilon >0$ such that for all $\kappa \in (0,\varepsilon)$, the following inequality holds
		\begin{align}\label{6.25}
			\mathcal{J}''(\overline{u} + \theta (\overline{u}_\kappa - \overline{u} ))( \overline{u}_\kappa - \overline{u})^2 \geq \dfrac{\mu}{2}\| \overline{u}_\kappa - \overline{u} \|^2_{L^2(Q)^3}
		\end{align}
		with $\mu$ is given by $(\textbf{SSC})$. 
	\end{lemma}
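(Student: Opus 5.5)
Write $v_\kappa := \overline{u}_\kappa - \overline{u}$ and $\rho_\kappa := \|v_\kappa\|_{L^2(Q)^3}$. By Theorem~\ref{theo6.2}, $\rho_\kappa \to 0$ as $\kappa\to 0$. We may assume $\rho_\kappa>0$, since otherwise \eqref{6.25} is trivial. Set $w_\kappa := v_\kappa/\rho_\kappa$. The plan has two parts. First, show $\mathcal{J}''(\overline{u})w_\kappa^2 \ge \tfrac{3\mu}{4}$ for all small $\kappa$, using (\textbf{SSC}) together with a contradiction argument in the spirit of Theorem~\ref{theo_ssc}. Second, move the base point from $\overline{u}$ to $\overline{u}+\theta v_\kappa$ using the continuity of $\mathcal{J}''$.

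\textbf{Continuity step.} By Theorem~\ref{2ndS}, $S$ is of class $C^2$ from $L^2(Q)^3$ into $W(0,T;H^3,V)$, and $g$ is $C^2$ on $V$. Together with the uniform bounds on states for controls in $U_{a,b}$, this gives
\[
\big|\mathcal{J}''(\overline{u}+\theta v_\kappa)w^2-\mathcal{J}''(\overline{u})w^2\big|\le \epsilon(\rho_\kappa)\|w\|^2_{L^2(Q)^3},
\]
with $\epsilon(r)\to 0$ as $r\to 0$, uniformly in $\theta\in[0,1]$. To justify this, I would write $\mathcal{J}''(u)w^2$ explicitly as
\[
\int_0^T \big[g''_{yy}(t,y_u)z_w^2+g'_y(t,y_u)m_w\big]\,dt+\gamma\|w\|^2,
\]
where $z_w=S'(u)w$ and $m_w=S''(u)w^2$. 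I would then estimate differences through the linearized and second-derivative equations, using Lemma~\ref{LipchitzS} and Lemma~\ref{lem2.1}. Consequently it suffices to show $\mathcal{J}''(\overline{u})w_\kappa^2\ge \tfrac{3\mu}{4}$, and then take $\varepsilon$ small enough that $\epsilon(\rho_\kappa)\le \mu/4$.

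\textbf{Main step: checking the hypotheses of (SSC).} To apply (\textbf{SSC}) at $\overline{u}$ we need two facts about $w_\kappa$.

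\emph{Sign conditions.} $w_\kappa$ satisfies \eqref{NUad_1}--\eqref{NUad_2}. This is automatic: since $\overline{u}_\kappa\in U_{a,b}$, we have $v_{\kappa,i}\ge 0$ wherever $\overline{u}_i=a_i$ and $v_{\kappa,i}\le 0$ wherever $\overline{u}_i=b_i$.

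\emph{Smallness of the first derivative.} We need $\mathcal{J}'(\overline{u})w_\kappa\le \eta$. This is the main obstacle. I would add the first-order condition for $\overline{u}_\kappa$ (Theorem~\ref{fnc_j1}, tested with $u=\overline{u}$), namely
\[
\mathcal{J}'(\overline{u}_\kappa)(\overline{u}-\overline{u}_\kappa)+\kappa\iint_Q\overline{\zeta}_\kappa(\overline{u}-\overline{u}_\kappa)\ge 0,
\]
to a Taylor expansion of $\mathcal{J}'$. This gives
\[
\mathcal{J}'(\overline{u})v_\kappa \le -\mathcal{J}''(\overline{u}+\vartheta v_\kappa)v_\kappa^2+\kappa\|\overline{\zeta}_\kappa\|_{L^2}\rho_\kappa .
\]
The first term on the right is bounded by $C\rho_\kappa^2$, and $\|\overline{\zeta}_\kappa\|_{L^2}$ is bounded by a constant: $|Q|^{1/2}$ for $j_1$, with similar uniform bounds for $j_2$ and $j_3$ from their subdifferential descriptions. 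Dividing by $\rho_\kappa$ yields
\[
\mathcal{J}'(\overline{u})w_\kappa\le C\rho_\kappa+C\kappa\to 0,
\]
so for $\kappa$ small this is below $\eta$. Then (\textbf{SSC}) gives $\mathcal{J}''(\overline{u})w_\kappa^2\ge\mu>\tfrac{3\mu}{4}$, which completes the argument.

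\textbf{Fallback.} If the direct Taylor bound turns out to be delicate (for instance, uniformity of the remainder in $\theta$), I would argue by contradiction as in the proof of Theorem~\ref{theo_ssc}. Take $\kappa_n\to0$ with $w_{\kappa_n}\rightharpoonup w$. The same estimate shows that $w$ satisfies the sign conditions and $\mathcal{J}'(\overline{u})w\le 0$. Then weak lower semicontinuity of $w\mapsto \mathcal{J}''(\overline{u})w^2-\gamma\|w\|^2$, together with $\gamma\|w_{\kappa_n}\|^2=\gamma$, yields the needed lower bound.
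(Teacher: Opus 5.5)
Your proposal is correct, and it has the same skeleton as the paper's proof. In both, you check the hypotheses of $(\textbf{SSC})$ for the normalized difference $w_\kappa=(\overline{u}_\kappa-\overline{u})/\rho_\kappa$, apply $(\textbf{SSC})$ at $\overline{u}$, and then shift the base point to $\overline{u}+\theta(\overline{u}_\kappa-\overline{u})$. That last step uses the $C^2$ regularity of $\mathcal{J}$ and the convergence $\overline{u}_\kappa\to\overline{u}$ from Theorem~\ref{theo6.2}.

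The difference is in the key step $\mathcal{J}'(\overline{u})w_\kappa\le\eta$.

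\textbf{The paper's route.} It argues by compactness. It extracts weakly convergent subsequences $v_{\kappa_n}\rightharpoonup v$ and uses the tangent cones at $\overline{u}$ and $\overline{u}_\kappa$ to get $\mathcal{J}'(\overline{u})v\ge 0$ and $\mathcal{J}'(\overline{u}_\kappa)v\le 0$. From these it deduces $\mathcal{J}'(\overline{u})v=0$ for every weak limit point. It then concludes only qualitatively that $\mathcal{J}'(\overline{u})v_\kappa\to 0$.

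\textbf{Your route.} You test the first-order condition of $\overline{u}_\kappa$ at $\overline{u}$ and apply a mean-value expansion of $\mathcal{J}'$. This gives directly the quantitative bound $\mathcal{J}'(\overline{u})w_\kappa\le C(\rho_\kappa+\kappa)$. You need no subsequence extraction and no lower bound $\mathcal{J}'(\overline{u})w_\kappa\ge 0$.

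\textbf{The sparsity term.} Your version also tracks the term $\kappa\iint_Q\overline{\zeta}_\kappa(\overline{u}-\overline{u}_\kappa)\,dxdt$ from the optimality system of $(\textbf{P}_\kappa)$, using the uniform $L^2$ bounds on $\overline{\zeta}_\kappa$ for $j_1,j_2,j_3$. The paper's inequality $\mathcal{J}'(\overline{u}_\kappa)v\le 0$ silently drops this term. In the compactness route that omission is harmless only because the term is $O(\kappa)$. Passing to the limit there along the moving base point $\overline{u}_{\kappa_n}$ also tacitly requires $\mathcal{J}'(\overline{u}_{\kappa_n})\to\mathcal{J}'(\overline{u})$ strongly.

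\textbf{What each costs.} Your route needs a local bound on $\|\mathcal{J}''\|$ near $\overline{u}$. That is already implied by the $C^2$ property the paper uses in its final step, so you assume nothing extra, and your fallback argument is not needed.
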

	
	\begin{proof}
		Let us take $\eta >0$ as in $(\textbf{SSC})$. Then we shall prove that $\overline{u}_\kappa - \overline{u}$ satisfies the conditions of $(\textbf{SSC})$ for sufficiently small $\varepsilon$. To this end, set $v_\kappa = \dfrac{\overline{u}_\kappa - \overline{u}}{\| \overline{u}_\kappa - \overline{u}\|_{L^2(Q)^3}}$. From the boundedness of $v_\kappa$, we can extract a subsequence $\{v_{\kappa_n}\}$ such that $v_{\kappa_n} \rightharpoonup v$ in $L^2(Q)^3$ as $n \to \infty$.  Since the cones $\mathcal{T}_{U_{a,b}} (\overline{u})$ and $\mathcal{T}_{U_{a,b}}(\overline{u}_\kappa)$ are weakly closed, we get $v \in \mathcal{T}_{U_{a,b}} (\overline{u}) \cap - \mathcal{T}_{U_{a,b}}(\overline{u}_\kappa)$. Then we have
		\begin{align*}
			\mathcal{J}'(\overline{u})v \geq 0 \quad \text{ and } \quad \mathcal{J}'(\overline{u}_\kappa)v \leq 0.
		\end{align*}
		Letting $n \to \infty$ yields $\mathcal{J}'(\overline{u})v=0$. From the weak convergence of $v_{\kappa_n}$, we get $\underset{n\rightarrow\infty}{\lim} \mathcal{J}'(\overline{u})v_{\kappa_n} = 0$. Since this happens for all subsequences $\{v_{\kappa_n}\}$ of $\{v_\kappa\}$ which converge weakly in $L^2(Q)^3$, we find $\underset{\kappa \to 0}{\lim} \mathcal{J}'(\overline{u})v_{\kappa} = 0$. Thus, there exists $\varepsilon>0$ such that 
		\begin{align*}
			\mathcal{J}'(\overline{u})v_{\kappa} \leq \eta
		\end{align*}
		holds for all $\kappa \in (0,\varepsilon)$. Thus $\mathcal{J}'(\overline{u})(\overline{u}_\kappa- \overline{u}) \leq \eta \| \overline{u}_\kappa - \overline{u}\|_{L^2(Q)^3}$ for all $\kappa \in (0,\varepsilon)$, which implies that $\mathcal{J}''(\overline{u})(\overline{u}_\kappa - \overline{u})^2 \geq \mu \| \overline{u}_\kappa - \overline{u} \|^2_{L^2(Q)^3}$.
		
		Therefore \eqref{6.25} follows from the facts that $\mathcal{J}(u)$ is of class $C^2$ in $L^2(Q)^3$ and $\overline{u} + \theta (\overline{u}_\kappa - \overline{u}) \to \overline{u}$ in $L^2(Q)^3$. 
	\end{proof}
	\begin{proof}[Proof of Theorem \ref{theo6.3}]
		Since $\overline{u}_\kappa$ is the local solution of $(\textbf{P}_\kappa)$, we have $\mathcal{J}_\kappa(\overline{u}_\kappa)- \mathcal{J}_\kappa(\overline{u}) \leq 0$, which can be written explicitly as
		\begin{equation} \label{6.3_ori}
			\mathcal{J}(\overline{u}_\kappa) - \mathcal{J}(\overline{u}) + \kappa \left(j_i(\overline{u}_{\kappa}) - j_i(\overline{u}) \right) \leq 0, \hspace{20pt} i=1,2,3.
		\end{equation}
		Recalling that \[ \mathcal{J}({\overline{u}_\kappa}) = \int_0^T g(t, y_{\overline{u}_\kappa}(t)) \, dt+ \frac{\gamma}{2} \iint_Q \vert \overline{u}_\kappa(x,t) \vert ^2 \, dxdt, \]
		using Taylor expansion of $\mathcal{J}$ yields
		\begin{equation*}
			\mathcal{J}({\overline{u}_\kappa}) - \mathcal{J}(\overline{u}) = \mathcal{J}'(\overline{u})(\overline{u}_\kappa - \overline{u}) + \frac{1}{2} \mathcal{J}''(\overline{u}+ \theta (\overline{u}_\kappa - \overline{u}))(\overline{u}_\kappa - \overline{u})^2, \theta\in (0,1).
		\end{equation*}
		Since $\overline{u}_\kappa \in U_{a,b}$, the variational inequality \eqref{4.3} and Lemma \ref{lem6.3} imply that
		\begin{equation*}
			\mathcal{J}(\overline{u}_\kappa) - \mathcal{J}(\overline{u}) \geq \frac{\mu}{4} \Vert \overline{u}_k - \overline{u} \Vert_{L^2(Q)^3}^2 .
		\end{equation*}
		Now we consider the difference $j_i(\overline{u})-j_i(\overline{u}_{\kappa})$. \\
		\textbf{Case 1}: $j_i=j_1$.\\
		When $j_i=j_1$, \eqref{6.3_ori} is equivalent to
		\begin{equation*}
			\mathcal{J}(\overline{u}_\kappa) - \mathcal{J}(\overline{u}) + \kappa \iint_Q \left[ \vert \overline{u}_\kappa(x,t) \vert  - \vert \overline{u}(x,t) \vert \right] \, dxdt \leq 0.
		\end{equation*}
		From this we deduce
		\begin{equation*}
			\mathcal{J}(\overline{u}_\kappa) -\mathcal{J}(\overline{u}) \leq \kappa \iint_Q \left[ \vert \overline{u}(x,t) \vert - \vert\overline{u}_\kappa(x,t) \vert \right] \, dxdt \leq \kappa \Vert \overline{u} - \overline{u}_\kappa \Vert_{L^1(Q)^3}.
		\end{equation*}
		Thus
		\begin{equation} \label{6.3.2}
			\frac{\mu}{4} \Vert \overline{u}_k - \overline{u} \Vert_{L^2(Q)^3}^2 \leq \kappa \Vert \overline{u}_\kappa - \overline{u} \Vert_{L^1(Q)^3}.
		\end{equation}
		However, because the domain $Q$ has a finite volume, we get
		\begin{equation} \label{6.3.1}
			\begin{aligned}
				\Vert \overline{u}_\kappa - \overline{u} \Vert_{L^1(Q)^3} &=\iint_Q 1\times \vert \overline{u}_\kappa - \overline{u} \vert \, dxdt \\
				& \leq \left( \iint_Q 1 \, dxdt \right)^{1/2} \left( \iint_Q \vert \overline{u}_\kappa - \overline{u} \vert^2 \, dxdt \right)^{1/2}\\
				&\leq \text{vol}(Q)^{1/2} \Vert \overline{u}_\kappa - \overline{u} \Vert_{L^2(Q)^3}.
			\end{aligned}
		\end{equation}
		Substituting \eqref{6.3.1} into \eqref{6.3.2}, we obtain
		\begin{equation} \label{sta_j1}
			\frac{\mu}{4} \Vert \overline{u}_k - \overline{u} \Vert_{L^2(Q)^3}^2 \leq \kappa \text{vol}(Q)^{1/2} \Vert \overline{u}_\kappa - \overline{u} \Vert_{L^2(Q)^3}.
		\end{equation}
		\textbf{Case 2}: $j_i=j_2$.\\
		Since $\displaystyle j_2(u)=\Vert u\Vert_{L^2(0,T;\mathbb{L}^1(\Omega))}=\left( \int_0^T \Vert u(t) \Vert_{\mathbb{L}^1(\Omega)}^2 \, dt \right)^{1/2}$, \eqref{6.3_ori} can be rewritten as
		\begin{equation} \label{6.3.4}
			\begin{aligned}
				\mathcal{J}(\overline{u}_{\kappa}) - \mathcal{J}(\overline{u}) &\leq \kappa \left[ j_2(\overline{u}) - j_2 (\overline{u}_{\kappa}) \right]\\
				&= \kappa \left( \int_0^T \Vert \overline{u}(t) \Vert_{\mathbb{L}^1(\Omega)}^2 \, dt \right)^{1/2} - \kappa \left( \int_0^T \Vert \overline{u}_{\kappa}(t) \Vert_{\mathbb{L}^1(\Omega)}^2 \, dt \right)^{1/2}.
			\end{aligned}
		\end{equation}
		Notice that $\overline{u}$ is an optimal solution of problem $(\textbf{P})$, meaning that $\mathcal{J}(\overline{u}_{\kappa}) - \mathcal{J}(\overline{u})\geq 0$, we get $j_2(\overline{u}) \geq j_2(\overline{u}_{\kappa})$. Thus we can write
		\begin{align*}
			&\left( \int_0^T \Vert \overline{u}(t) \Vert_{\mathbb{L}^1(\Omega)}^2 \, dt \right)^{1/2} - \left( \int_0^T \Vert \overline{u}_{\kappa}(t) \Vert_{\mathbb{L}^1(\Omega)}^2 \, dt \right)^{1/2} \\
			&\leq \left( \int_0^T \Vert \overline{u}(t) \Vert_{\mathbb{L}^1(\Omega)}^2 \, dt - \int_0^T \Vert \overline{u}_{\kappa}(t) \Vert_{\mathbb{L}^1(\Omega)}^2 \, dt \right)^{1/2} \\
			&= \left[\int_0^T  \left[  \left( \int_{\Omega} |\overline{u}(t)| \, dx \right)^2 - \left(\int_{\Omega} \vert \overline{u}_{\kappa}(t) \vert \, dx \right)^2\right]dt \right]^{1/2}.
		\end{align*}
		Moreover, it follows
		\begin{align*}
			\left( \int_{\Omega} |\overline{u}(t)| \, dx \right)^2 &- \left(\int_{\Omega} \vert \overline{u}_{\kappa}(t) \vert \, dx \right)^2 \\
			&= \left( \int_{\Omega} (\vert \overline{u}(t) \vert + \vert \overline{u}_{\kappa}(t) \vert) \, dx \right) \left( \int_{\Omega} (\vert \overline{u}(t) \vert - \vert \overline{u}_{\kappa}(t) \vert) \, dx \right) \\
			&\leq C(b). \int_{\Omega} \vert \overline{u}(t) - \overline{u}_{\kappa}(t)\vert \, dx
		\end{align*}
		provided that $\Omega$ is a bounded domain.
		Hence, 
		\begin{equation} \label{6.3.5}
			\begin{aligned}
				\int_0^T  \left[  \left( \int_{\Omega} |\overline{u}(t)| \, dx \right)^2 - \left(\int_{\Omega} \vert \overline{u}_{\kappa}(t) \vert \, dx \right)^2\right]dt &\le \int_0^T C(b). \int_{\Omega} \vert \overline{u}(t) - \overline{u}_{\kappa}(t)\vert \, dt \\
				&\le C(b,T).\Vert \overline{u}_{\kappa} - \overline{u}\Vert_{L^1(Q)^3}. 
			\end{aligned}
		\end{equation}
		Substituting \eqref{6.3.5} into \eqref{6.3.4} leads to
		\begin{equation}
			\mathcal{J}(\overline{u}_{\kappa}) - \mathcal{J}(\overline{u}) \leq \kappa C(b,T)^{1/2}.\Vert \overline{u}_{\kappa} - \overline{u}\Vert_{L^1(Q)^3}^{1/2}.
		\end{equation}
		Then by using Lemma \ref{lem6.3} and \eqref{6.3.1}, we obtain
		\begin{align*}
			\frac{\mu}{4} \Vert \overline{u}_k - \overline{u} \Vert_{L^2(Q)^3}^2 &\leq \kappa C(b,T)^{1/2}.\Vert \overline{u}_{\kappa} - \overline{u}\Vert_{L^1(Q)^3}^{1/2} \\
			&\leq \kappa C(b, T)^{1/2} \text{vol}(Q)^{1/4} \Vert \overline{u}_{\kappa_n}- \overline{u} \Vert_{L^2(Q)^3}^{1/2}.
		\end{align*}
		Consequently,
		\begin{equation} \label{sta_j2}
			\Vert \overline{u}_{\kappa} - \overline{u} \Vert_{L^2(Q)^3} \leq \left(\frac{4}{\mu}\right)^{2/3}C(b,T)^{1/3} \text{vol}(Q)^{1/6} \kappa^{2/3}.
		\end{equation}
		\textbf{Case 3}: $j_i=j_3$.\\
		Analogously to Case 2, we also get
		\begin{equation} \label{6.3.6}
			\begin{aligned}
				\mathcal{J}({\overline{u}}_{\kappa}) - \mathcal{J}(\overline{u}) &\leq \kappa \left[ j_3(\overline{u}) - j_3 (\overline{u}_{\kappa}) \right] \\
				&= \kappa\int_\Omega \Vert \overline{u}(x) \Vert_{L^2(0,T)^3}\, dx - \kappa \int_\Omega \Vert \overline{u}_{\kappa} (x)\Vert_{L^2(0,T)^3} \, dx.
			\end{aligned}
		\end{equation}
		The definition of $j_3$ yields
		\begin{equation*}
			\mathcal{J}({\overline{u}}_{\kappa}) - \mathcal{J}(\overline{u}) \leq \kappa \int_\Omega\left[ \left( \int_0^T \vert \overline{u}(x,t) \vert^2 \, dt\right)^{1/2} - \left( \int_0^T \vert \overline{u}_{\kappa}(x,t) \vert^2 \, dt\right)^{1/2} \right]dx.
		\end{equation*}
		On the other hand, we can write
		\begin{align*} 
			&\left( \int_0^T \vert \overline{u}(x,t) \vert^2 \, dt\right)^{1/2} - \left( \int_0^T \vert \overline{u}_{\kappa}(x,t) \vert^2 \, dt\right)^{1/2} \\&= \displaystyle \frac{\int_0^T \vert \overline{u}(x,t) \vert^2 \, dt - \int_0^T \vert \overline{u}_{\kappa}(x,t) \vert^2 \, dt }{\left( \int_0^T \vert \overline{u}(x,t) \vert^2 \, dt\right)^{1/2} + \left( \int_0^T \vert \overline{u}_{\kappa}(x,t) \vert^2 \, dt\right)^{1/2}} \\
			&\leq \frac{\int_0^T (\vert \overline{u}(x,t) \vert^2 - \vert\overline{u}_{\kappa}(x,t)\vert^2) \, dt}{2a\sqrt{T}}.
		\end{align*}
		It follows that
		\begin{align*}
			\int_0^T \left( \vert \overline{u}(x,t) \vert^2 - \vert\overline{u}_{\kappa}(x,t)\vert^2 \right) \, dt &= \int_0^T \left[ (\vert \overline{u}(x,t)\vert+\vert \overline{u}_{\kappa}(x,t)\vert)(\vert \overline{u}(x,t)\vert-\vert \overline{u}_{\kappa}(x,t)\vert) \right] \, dt \\
			&\leq C(T,b)\int_0^T \vert \overline{u}(x,t) - \overline{u}_{\kappa}(x,t) \vert \, dt.
		\end{align*}
		Therefore
		\begin{align*}
			&\left( \int_0^T \vert \overline{u}(x,t) \vert^2 \, dt\right)^{1/2} - \left( \int_0^T \vert \overline{u}_{\kappa}(x,t) \vert^2 \, dt\right)^{1/2} \\
			&\leq \frac{C(T,b)}{2a\sqrt{T}} \int_0^T \vert \overline{u}(x,t) - \overline{u}_{\kappa}(x,t) \vert \, dt.
		\end{align*}
		Finally,
		\begin{align*}
			\mathcal{J}({\overline{u}}_{\kappa}) - \mathcal{J}(\overline{u}) &\leq \kappa \int_\Omega \frac{C(T,b)}{2a\sqrt{T}} \int_0^T \vert \overline{u}(x,t) - \overline{u}_{\kappa}(x,t) \vert \, dt \\
			&\leq \kappa C(T,a,b,\vert\Omega\vert) \Vert \overline{u}_{\kappa} - \overline{u} \Vert_{L^1(Q)^3}.
		\end{align*}
		Applying Lemma \ref{lem6.3} and \eqref{6.3.1} once again, we conclude
		\begin{align*}
			\frac{\mu}{4} \Vert \overline{u}_k - \overline{u} \Vert_{L^2(Q)^3}^2 &\leq \kappa C(T,a,b,\vert\Omega\vert).\Vert \overline{u}_{\kappa} - \overline{u}\Vert_{L^1(Q)^3} \\
			&\leq \kappa C(T,a,b,\vert\Omega\vert) \text{vol}(Q)^{1/2} \Vert \overline{u}_{\kappa_n}- \overline{u} \Vert_{L^2(Q)^3}.
		\end{align*}
		Consequently,
		\begin{equation} \label{sta_j3}
			\Vert \overline{u}_{\kappa} - \overline{u} \Vert_{L^2(Q)^3} \leq \frac{4}{\mu}C(T,a,b,\vert\Omega\vert) \text{vol}(Q)^{1/2} \kappa.
		\end{equation}
		It follows directly from \eqref{sta_j1}, \eqref{sta_j2} and \eqref{sta_j3} that \eqref{lipz_sta_j13} and \eqref{lipz_sta_j2} hold. We achieve the Lipschitz and H\"older convergence of the solutions to the sparse optimal control problems toward the corresponding standard non-sparse ones.
	\end{proof}
	\begin{remark}
		\rm{ The stability of the sparse control systems \eqref{lipz_sta_j13} and \eqref{lipz_sta_j2} can also be presented under the $L^1$-norm. }
	\end{remark}
	\noindent{\bf Acknowledgements.} 
	
	
	
\end{document}